\theoremstyle{plain}
\newtheorem{theorem}{Theorem}[section]
\newtheorem{lemma}[theorem]{Lemma}
\newtheorem{definition}[theorem]{Definition}
\newtheorem{remark}[theorem]{Remark}
\newtheorem{proposition}[theorem]{Proposition}
\newtheorem{example}[theorem]{Example}
\newtheorem{Lem}[theorem]{Lemma}
\newtheorem{Pro}[theorem]{Proposition}
\newtheorem{Rem}[theorem]{Remark}
\newtheorem{Cor}[theorem]{Corollary}
\newtheorem*{corollary*}{Corollary}
\newtheorem*{theorem*}{Theorem}
\def \R {{\mathbb {R}}}
\def \N {{\mathbb N}}
\begin{document}

\setcounter{page}{1}

\newcommand{\M}{{\mathcal M}}
\newcommand{\loc}{{\mathrm{loc}}}
\newcommand{\core}{C_0^{\infty}(\Omega)}
\newcommand{\sob}{W^{1,p}(\Omega)}
\newcommand{\sobloc}{W^{1,p}_{\mathrm{loc}}(\Omega)}
\newcommand{\merhav}{{\mathcal D}^{1,p}}
\newcommand{\be}{\begin{equation}}
\newcommand{\ee}{\end{equation}}
\newcommand{\mysection}[1]{\section{#1}\setcounter{equation}{0}}
%%%%%%%%%%%%
\newcommand{\laplace}{\Delta}%{\triangle}
\newcommand{\pl}{\laplace_p}
\newcommand{\grad}{\nabla}%{\bigtriangledown}
\newcommand{\pd}{\partial}
\newcommand{\bo}{\pd}
\newcommand{\csub}{\subset \subset}
\newcommand{\sm}{\setminus}
\newcommand{\ssm}{:}
\newcommand{\diver}{\mathrm{div}\,}
%%%%%%%%%%%%%%%
\newcommand{\bea}{\begin{eqnarray}}
\newcommand{\eea}{\end{eqnarray}}
\newcommand{\bean}{\begin{eqnarray*}}
\newcommand{\eean}{\end{eqnarray*}}
\newcommand{\thkl}{\rule[-.5mm]{.3mm}{3mm}}
%%%%%%%%%%%%%%%%%%%%%%%%%%%
\newcommand{\cw}{\stackrel{\rightharpoonup}{\rightharpoonup}}
\newcommand{\id}{\operatorname{id}}
\newcommand{\supp}{\operatorname{supp}}
\newcommand{\wlim}{\mbox{ w-lim }}
\newcommand{\mymu}{{x_N^{-p_*}}}
\newcommand{\abs}[1]{\lvert#1\rvert}
%%%%%%%%%%%%%%%%%%
\newcommand{\pf}{\noindent \mbox{{\bf Proof}: }}

%%%%%%%%%%%%%%%%%%
%\newenvironment{proof}{{\bf Proof.}}{\hfill $\bowtie$\vskip4mm}

\renewcommand{\theequation}{\thesection.\arabic{equation}}
\catcode`@=11 \@addtoreset{equation}{section} \catcode`@=12
%%%%%%%%%%%%%%%%%%%%%%
\newcommand{\Real}{\mathbb{R}}
\newcommand{\real}{\mathbb{R}}
\newcommand{\Nat}{\mathbb{N}}
\newcommand{\ZZ}{\mathbb{Z}}
\newcommand{\CC}{\mathbb{C}}
\newcommand{\Pess}{\opname{Pess}}
\newcommand{\Proof}{\mbox{\noindent {\bf Proof} \hspace{2mm}}}
\newcommand{\mbinom}[2]{\left (\!\!{\renewcommand{\arraystretch}{0.5}
\mbox{$\begin{array}[c]{c}  #1\\ #2  \end{array}$}}\!\! \right )}
\newcommand{\brang}[1]{\langle #1 \rangle}
\newcommand{\vstrut}[1]{\rule{0mm}{#1mm}}
\newcommand{\rec}[1]{\frac{1}{#1}}
\newcommand{\set}[1]{\{#1\}}
\newcommand{\dist}[2]{$\mbox{\rm dist}\,(#1,#2)$}
\newcommand{\opname}[1]{\mbox{\rm #1}\,}
\newcommand{\mb}[1]{\;\mbox{ #1 }\;}
\newcommand{\undersym}[2]
 {{\renewcommand{\arraystretch}{0.5}  \mbox{$\begin{array}[t]{c}
 #1\\ #2  \end{array}$}}}
\newlength{\wex}  \newlength{\hex}
\newcommand{\understack}[3]{%
 \settowidth{\wex}{\mbox{$#3$}} \settoheight{\hex}{\mbox{$#1$}}
 \hspace{\wex}  \raisebox{-1.2\hex}{\makebox[-\wex][c]{$#2$}}
 \makebox[\wex][c]{$#1$}   }%
%%Macros for changing font size in math.
\newcommand{\smit}[1]{\mbox{\small \it #1}}% only for letters, numbers
\newcommand{\lgit}[1]{\mbox{\large \it #1}}% only for letters, numbers
\newcommand{\scts}[1]{\scriptstyle #1}
\newcommand{\scss}[1]{\scriptscriptstyle #1}
\newcommand{\txts}[1]{\textstyle #1}
\newcommand{\dsps}[1]{\displaystyle #1}
%%%%%%%%%%%%%%%%%%%%%%%%%%%%%%%%%%%%%%%%%%%%%%
\newcommand{\dx}{\,\mathrm{d}x}
\newcommand{\dy}{\,\mathrm{d}y}
\newcommand{\dz}{\,\mathrm{d}z}
\newcommand{\dt}{\,\mathrm{d}t}
\newcommand{\dr}{\,\mathrm{d}r}
\newcommand{\du}{\,\mathrm{d}u}
\newcommand{\dv}{\,\mathrm{d}v}
\newcommand{\dV}{\,\mathrm{d}V}
\newcommand{\ds}{\,\mathrm{d}s}
\newcommand{\dS}{\,\mathrm{d}S}
\newcommand{\dk}{\,\mathrm{d}k}

\newcommand{\dphi}{\,\mathrm{d}\phi}
\newcommand{\dtau}{\,\mathrm{d}\tau}
\newcommand{\dxi}{\,\mathrm{d}\xi}
\newcommand{\deta}{\,\mathrm{d}\eta}
\newcommand{\dsigma}{\,\mathrm{d}\sigma}
\newcommand{\dtheta}{\,\mathrm{d}\theta}
\newcommand{\dnu}{\,\mathrm{d}\nu}

%%%%%%%%%%%%%%%%%%%%%%%%%%%%%%%Macros for Greek letters.
\def\ga{\alpha}     \def\gb{\beta}       \def\gg{\gamma}
\def\gc{\chi}       \def\gd{\delta}      \def\ge{\epsilon}
\def\gth{\theta}                         \def\vge{\varepsilon}
\def\gf{\phi}       \def\vgf{\varphi}    \def\gh{\eta}
\def\gi{\iota}      \def\gk{\kappa}      \def\gl{\lambda}
\def\gm{\mu}        \def\gn{\nu}         \def\gp{\pi}
\def\vgp{\varpi}    \def\gr{\rho}        \def\vgr{\varrho}
\def\gs{\sigma}     \def\vgs{\varsigma}  \def\gt{\tau}
\def\gu{\upsilon}   \def\gv{\vartheta}   \def\gw{\omega}
\def\gx{\xi}        \def\gy{\psi}        \def\gz{\zeta}
\def\Gg{\Gamma}     \def\Gd{\Delta}      \def\Gf{\Phi}
\def\Gth{\Theta}
\def\Gl{\Lambda}    \def\Gs{\Sigma}      \def\Gp{\Pi}
\def\Gw{\Omega}     \def\Gx{\Xi}         \def\Gy{\Psi}

%%%%%%%%%%%%%%%%%%%%%%%%%%%%%%%%%%%%%%%%%%%%%%%%%%%%%%%%%%%%%%%%%%
%%%%%%%%%%%%%%%%>>GEORGIOS-NEWCOMMAND<<%%%%%%%%%%%%%%%%%%%%%%%%%%%
\newcommand{\Rn}{\mathbb{R}^n}
 \newcommand{\lanbox}{{\, \vrule height 0.25cm width 0.25cm depth 0.01cm \,}}
  \renewcommand{\qedsymbol}{\lanbox}
   \newcommand{\dd}{\mathrm{d}}
%%%%%%%%%%%%%%%%%%%%%%%%%%%%%%%%%%%%%%%%%%%%%%%%%%%%%%%%%%%%%%%%%%
%%%%%%%%%%%%%%%%%%%%%%%%%%%%%%%%%%%%%%%%%%%%%%%%%%%%%%%%%%%%%%%%%%

\renewcommand{\div}{\mathrm{div}}
\newcommand{\red}[1]{{\color{red} #1}}
%\newcommand{\green}[1]{{\color{green} #1}}

%\begin{titlepage}
\pagestyle{headings}
\title[Optimal Hardy inequalities in cones]{Optimal Hardy inequalities in cones}
\author{Baptiste Devyver}
\address{Baptiste Devyver, Department of Mathematics,  University of British Columbia, Vancouver, Canada}
\email{devyver@math.ubc.ca}
\author{Yehuda Pinchover}
\address{Yehuda Pinchover,
Department of Mathematics, Technion - Israel Institute of
Technology,   Haifa 32000, Israel}
\email{pincho@techunix.technion.ac.il}
\author{Georgios Psaradakis}
\address{Georgios Psaradakis, Department of Mathematics, Technion - Israel Institute of
Technology,   Haifa 32000, Israel}
\email{georgios@techunix.technion.ac.il}
%%%%%%%%%%%%%%%%%%%%%%%%%%%%%%%%%%%
\begin{abstract}
Let $\Omega$ be an open connected cone in $\mathbb{R}^n$ with vertex at the origin. Assume that the operator
$$P_\mu:=-\Delta-\frac{\mu}{\delta_\Omega^2(x)}$$ is {\em subcritical} in $\Omega$, where $\delta_\Omega$ is the distance function to the boundary of $\Omega$ and $\mu \leq 1/4$. We show that under some smoothness assumption on $\Omega$,  the following improved Hardy-type inequality
\begin{equation*}
 \int_{\Omega}|\nabla \varphi|^2\dx -\mu\int_{\Omega} \frac{|\varphi|^2}{\delta_\Omega^2}\dx \geq \lambda(\mu)\int_{\Omega}  \frac{|\varphi|^2}{|x|^2}\dx \qquad \forall \varphi\in C_0^\infty(\Omega),
\end{equation*}
holds true, and the Hardy-weight $\lambda(\mu)|x|^{-2}$ is optimal in a certain definite sense. The constant $\lambda(\mu)>0$ is given explicitly.

\medskip 

\noindent {\sc{2010 MSC.}} Primary 35A23; Secondary 35B09, 35J20, 35P05.

 \noindent {\sc{Keywords.}} Hardy inequality, minimal growth, positive solutions.
\end{abstract}
\maketitle
\section{Introduction}\label{sec_Int}
Let $P$ be a symmetric second-order linear elliptic operator with real coefficients, defined in a domain $\Gw$ of $\mathbb{R}^n$, and denote by $q$ its associated quadratic form. Suppose that $q(\vgf)\geq0$ for all $\vgf\in C_0^\infty(\Omega),$ i.e. $P$ is {\em nonnegative} ($P\geq0$) in $\Gw$. Then $P$ is called {\em subcritical} in $\Gw$ if there exists a nontrivial, nonnegative weight $W$ such that the following Hardy-type inequality holds true
\begin{equation}
\label{HardyType}
q(\vgf) \geq \lambda \int_\Omega W(x) |\varphi(x)|^2 \dx \qquad \forall\varphi \in C_0^\infty(\Omega),
\end{equation}
where $\lambda  > 0$ is a constant. If $P\geq 0$ in $\Gw$ and (\ref{HardyType}) is not true for any $W\gneqq0$, then $P$ is called {\it critical} in $\Gw$.

Given a subcritical operator $P$  in $\Omega$, there is a huge convex set of weights $W\gneqq 0$ satisfying \eqref{HardyType}.  A natural question is to find a weight function $W$ which is ``as large as possible" and satisfies \eqref{HardyType} (see Agmon \cite[Page 6]{Ag82}).

In the paper \cite{DFP}, the authors have constructed a Hardy-weight $W$, for a subcritical operator $P$, which is {\em optimal}
in a certain definite sense. For symmetric operators the main result of \cite{DFP} reads as follows.

\begin{theorem}[{\cite[Theorem~2.2]{DFP}}]\label{temp_def}
Assume that $P$ is subcritical in $\Gw$. Fix  a reference point $x_0\in \Gw$, and set $\Gw^\star:=\Gw\setminus \{x_0\}$. There exists a nonzero nonnegative weight $W$ satisfying the following properties:
\begin{enumerate}
\item[(a)] Denote by $\lambda_0=\lambda_0(P,W,\Gw^\star)$ the largest constant $\gl$ satisfying
 \begin{equation}
\label{HardyType_def}
q(\vgf) \geq \lambda \int_{\Gw^\star} W(x) |\varphi(x)|^2 \dx \qquad \forall\varphi \in C_0^\infty(\Omega^\star).
\end{equation}
Then $\gl_0>0$ and the operator   $P-\lambda_0 W$ is \textit{critical} in $\Gw^\star$; that is, the inequality
$$q(\vgf) \geq  \int_{\Gw^\star} V(x)|\vgf(x)|^2 \dx \qquad \forall \vgf \in C_0^\infty(\Omega^\star)$$
is not valid for any $V \gneqq \lambda_0 W$.

\item[(b)] The constant $\lambda_0$ is also the best constant for \eqref{HardyType_def} with test functions supported in $\Gw'\subset \Gw$, where $\Gw'$ is either the complement of any fixed compact set in $\Omega$ containing $x_0$ or any fixed punctured neighborhood of $x_0$.
\item[(c)] The operator $P - \lambda_0 W$ is {\it null-critical} in $\Gw^\star$; that is, the corresponding Rayleigh-Ritz variational problem
\begin{equation}\label{RRVP}
\inf_{\vgf\in \mathcal{D}_P^{1,2}(\Gw^\star)}\frac{q(\varphi)}{\int_{\Gw^\star} W(x) |\vgf(x)|^2 \dx}
\end{equation}
 admits no minimizer. Here $\mathcal{D}_P^{1,2}(\Omega^\star)$ is the completion of $C_0^\infty(\Omega^\star)$ with respect to the norm $u\mapsto \sqrt{q(u)}$.
\item[(d)] If furthermore $W>0$ in $\Gw^\star$, then the spectrum and the essential spectrum of the Friedrichs extension of the operator $W^{-1}P$ on $L^2(\Gw^\star, W\dx)$ are both equal to  $[\lambda_0,\infty)$.
\end{enumerate}
\end{theorem}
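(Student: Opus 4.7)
My plan is to construct the Hardy weight $W$ explicitly via the \emph{supersolution construction} of Fitzsimmons. Since $P$ is subcritical in $\Omega$, there exist a minimal positive Green function $u_0:=G^P_{x_0}$ with pole at $x_0$, and (by the Agmon-Allegretto-Piepenbrink (AAP) theorem combined with an exhaustion argument, or by Martin boundary theory) a positive global solution $u_1$ of $Pu=0$ in $\Omega$ with \emph{minimal growth in a neighborhood of infinity} of $\Omega$. I would then set
\[
v := \sqrt{u_0\, u_1}, \qquad W := \tfrac{1}{4}\bigl|\nabla \log(u_0/u_1)\bigr|^2 ,
\]
and verify by a direct pointwise calculation, using $Pu_i=0$ and the identity $\Delta \log u = \Delta u/u - |\nabla \log u|^2$, that $(P-W)v = 0$ in $\Omega^\star$. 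Hence $v$ is a positive solution of $P - W$ in $\Omega^\star$, which by AAP yields $P - W \geq 0$ there, i.e.\ \eqref{HardyType_def} with $\lambda_0 \geq 1$; testing on truncations of $v$ localized near the two ends of $\Omega^\star$ gives the matching upper bound, so the normalization forces $\lambda_0 = 1$ for this particular $W$.

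For the criticality claim of part~(a), I would invoke the standard characterization: $P - \lambda_0 W$ is critical in $\Omega^\star$ iff it admits, up to a scalar, a unique positive supersolution, the \emph{ground state}. I would show that $v$ plays this role by verifying that $v$ has minimal growth at each of the two ends of $\Omega^\star$: near $x_0$, $u_1$ is smooth and bounded below by a positive constant, so $v$ inherits the minimal growth of $\sqrt{G^P_{x_0}}$; near infinity in $\Omega$, $u_0$ has minimal growth and $v$ inherits that of $\sqrt{u_1}$. A comparison argument for the nonnegative operator $P - \lambda_0 W$ (which has $v$ as a positive solution) then promotes these two local statements into the global uniqueness of positive supersolutions, establishing criticality.

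Parts (b)--(d) would be deduced from (a) by general principles. For (b), the ground-state transform $\varphi = v\psi$ converts \eqref{HardyType_def} into a weighted Poincar\'e-type inequality whose sharp constant, when restricted to functions supported near a single end, still equals $\lambda_0$ thanks to the minimal-growth asymptotics of $v$ at that end. For null-criticality in (c), I would check directly that $\int_{\Omega^\star} W v^2\,\mathrm{d}x = +\infty$ by splitting the integral across the two ends and using the two-sided asymptotics of $u_0$ and $u_1$ to see that the integrand fails to be integrable at (at least) one end. Finally, (d) follows from (a)--(c) via Persson-type arguments: the bottom of the $L^2(\Omega^\star, W\,\mathrm{d}x)$-spectrum of $W^{-1}P$ equals $\lambda_0$ by (a), no $L^2$-eigenfunction exists at $\lambda_0$ by (c), and the localization in (b) identifies $\inf\sigma_{\mathrm{ess}}$ with $\lambda_0$ as well. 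The principal obstacle I anticipate is the verification of the minimal-growth property of $v$ at \emph{both} ends simultaneously; this step requires genuine asymptotic information on $u_0$ and $u_1$ near $x_0$ and at infinity in $\Omega$, typically obtained through Harnack-type boundary comparison principles and a careful selection of $u_1$.
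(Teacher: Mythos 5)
This theorem is quoted from \cite{DFP} and is \emph{not} reproved in the present paper, so the only meaningful comparison is with the argument of [DFP]. Your overall architecture does match theirs: take $u_0 = G_{x_0}^P$, pick a second solution $u_1$, set $v = \sqrt{u_0 u_1}$ and $W = \frac14|\nabla\log(u_0/u_1)|^2$, verify $(P-W)v = 0$, then get criticality of $P-\lambda_0 W$ via a ground-state/null-sequence argument, null-criticality from divergence of $\int W v^2$, and the spectral statement by a Persson-type localization. The pointwise identity $(P-W)v=0$ that you propose to check does hold when the principal part is $-\Delta$ and both $u_i$ solve $Pu=0$ in $\Omega^\star$.

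The gap is in your choice of $u_1$. You take $u_1$ to be ``a positive global solution of $Pu=0$ in $\Omega$ with minimal growth in a neighborhood of infinity of $\Omega$.'' No such $u_1$ exists under the hypothesis that $P$ is subcritical: by Theorem~\ref{thm_crit}(iv) of the present paper, the existence of a positive solution of $Pu=0$ in $\Omega$ of minimal growth near $\bar\infty$ is \emph{equivalent} to criticality of $P$ in $\Omega$, which we have assumed to be false. Moreover, even setting existence aside, minimal growth at $\bar\infty$ is the wrong property: it would force $u_1 \asymp u_0 = G^P_{x_0}$ near infinity, making $u_0/u_1$ bounded above and below there and thereby deflating the weight $W$ and destroying the mechanism by which $v = \sqrt{u_0 u_1}$ becomes the ground state of $P - W$ in $\Gw^\star$. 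What [DFP] actually needs is the \emph{opposite} smallness: a global solution $u_1$ of $Pu=0$ in $\Omega$ (regular at $x_0$, hence $u_1/u_0 \to 0$ at $x_0$ automatically) that dominates $u_0$ at infinity, i.e.\ $u_0/u_1 \to 0$ as $x\to\bar\infty$; with these two-sided limits the ground-state transform $q(v\psi) = \int v^2|\nabla\psi|^2$ admits a null-sequence concentrating at either end, giving criticality and the best-constant-at-each-end claims. A secondary issue: $\lambda_0 = 1$ should be deduced from the criticality of $P-W$, not established beforehand by ``testing on truncations of $v$''—that truncation estimate is exactly what the null-sequence/criticality argument is meant to supply, so the order of the steps as you wrote them is circular.
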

\begin{definition}\label{def_opt}{\em
A weight function that satisfies properties (a)--(d) is called an {\em optimal Hardy weight} for the operator $P$ in $\Gw$.
 }
\end{definition}
\noindent For related spectral results concerning optimal Hardy inequalities see \cite{D}.

\medskip

One may look at a punctured domain $\Gw^\star$ as a noncompact manifold with two ends $\bar{\infty}$ and $x_0$, where $\bar{\infty}$ denotes the ideal point in the one-point compactification of $\Gw$. In fact,  the results of Theorem~\ref{temp_def} are valid on such manifolds.  In \cite[Theorem~11.6]{DFP}, the authors extend Theorem~\ref{temp_def} and get an optimal Hardy-weight $W$ in the {\em entire} domain $\Gw$, in the case of {\em boundary singularities}, where the two singular points of the Hardy-weight are located at $\pd\Omega\cup \{\bar{\infty}\}$ and not at $\bar{\infty}$ and at an isolated interior point of $\Omega$ as in Theorem~\ref{temp_def}. The result reads as follows.
%%%%%%%%%%%%
\begin{theorem}[{\cite[Theorem~11.6]{DFP}}]\label{thm_bs}
Assume that $P$ is subcritical in $\Gw$.  Suppose that the Martin boundary $\delta\Omega$ of the operator $P$ in $\Gw$ is equal to the minimal Martin boundary and is equal to $\partial\Omega\cup \{\xi_0,\xi_1\}$, where $\partial\Omega\sm \{\xi_0,\xi_1\}$ is assumed to be a regular manifold of dimension $(n-1)$ without boundary, and the coefficients of $P$ are locally regular up to $\partial\Omega\sm \{\xi_0,\xi_1\}$.

Denote by $\hat{\Omega}$ the Martin compactification of $\Omega$, and assume that there exists a bounded domain $D\subset \Gw$ such that $\xi_0$ and $\xi_1$ belong to two different connected components $D_0$ and $D_1$ of $\hat{\Omega}\setminus \bar{D}$ such that each $D_j$ is a neighborhood in $\hat{\Omega}$ of $\xi_j$, where $j=0,1$.

Let $u_0$ and $u_1$ be the minimal Martin functions at $\xi_0$ and $\xi_1$ respectively. Consider the supersolution
$u_{1/2}:= (u_0u_1)^{1/2}$ of the equation $Pu=0$ in $\Gw$, and assume that
\begin{equation}\label{u1u0bs}
\lim_{\substack{x\to \gz_0\\x\in \Gw}} \frac{u_1(x)}{u_0(x)}=\lim_{\substack{x\to \gz_1\\ x\in \Gw}} \frac{u_0(x)}{u_1(x)}= 0.
\end{equation}
\vskip 3mm Then the weight $W:=\frac{Pu_{1/2}}{u_{1/2}}$ is an optimal Hardy weight for $P$ in $\Gw$. Moreover, if $W$ does not vanish on $\hat{\Omega}\setminus\{\xi_0,\xi_1\}$, then the spectrum and the essential spectrum  of the Friedrichs extension of the operator $W^{-1}P$ acting on $L^2(\Omega,W\mathrm{d}x)$ is $[1,\infty)$.
\end{theorem}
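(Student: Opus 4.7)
The plan is to adapt the argument of Theorem~\ref{temp_def}, replacing the pair (Green's function at $x_0$, minimal solution at $\bar\infty$) by $(u_0,u_1)$. Since $(P-W)u_{1/2}=0$ by the very definition of $W$, the ground-state transform gives
\begin{equation*}
q(\vgf) - \int_\Omega W\vgf^2\dx = \int_\Omega u_{1/2}^{\,2}\,A\nabla(\vgf/u_{1/2})\cdot\nabla(\vgf/u_{1/2})\dx \geq 0, \qquad \vgf\in C_0^\infty(\Omega),
\end{equation*}
where $A$ denotes the principal part of $P$, so the Hardy-type inequality holds with $\lambda_0\ge 1$. All remaining properties rest on a single key computation: writing $\rho := \log(u_1/u_0)$, the equations $Pu_0=Pu_1=0$ and the symmetry of $A$ give
\begin{equation*}
W \;=\; \tfrac14\,A\nabla\rho\cdot\nabla\rho,
\end{equation*}
and show that the vector field $X := u_0u_1\,A\nabla\rho$ is divergence-free on $\Omega$ and tangent to the regular part of $\partial\Omega$ (because $u_0$ and $u_1$ both vanish there, each being minimal only at one of the points $\xi_0,\xi_1$). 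Hence the flux $\Phi(t) := \int_{\{\rho=t\}} X\cdot\nu\dS$ is a constant $\Phi_0>0$ independent of $t\in\Real$.

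Criticality of $P-W$ follows via a null-sequence construction. By \eqref{u1u0bs}, $\rho\to-\infty$ at $\xi_0$ and $\rho\to+\infty$ at $\xi_1$, so $\{|\rho|\leq n\}$ exhausts $\Omega$ up to a neighborhood of $\partial\Omega\setminus\{\xi_0,\xi_1\}$. Choose $F_n$ piecewise linear with $F_n\equiv 1$ on $[-n,n]$, $\supp F_n\subset[-2n,2n]$, and $|F_n'|\leq 1/n$; multiply $u_{1/2}F_n(\rho)$ by an auxiliary cutoff toward the regular part of $\partial\Omega$ to obtain $\vgf_n\in C_0^\infty(\Omega)$ with $\vgf_n\to u_{1/2}$ pointwise on compacta. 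The ground-state transform together with the coarea formula yields
\begin{equation*}
q(\vgf_n) - \int_\Omega W\vgf_n^{\,2}\dx \;=\; \int_\Real |F_n'(t)|^2\,\Phi_0\dt + o(1) \;=\; \frac{2\Phi_0}{n} + o(1) \;\longrightarrow\; 0,
\end{equation*}
identifying $u_{1/2}$ as the ground state of $P-W$. This yields criticality (parts (a)--(b) of Theorem~\ref{temp_def}). Null-criticality (c) is then immediate from
\begin{equation*}
\int_\Omega W u_{1/2}^{\,2}\dx \;=\; \tfrac14\int_\Omega u_0u_1\,A\nabla\rho\cdot\nabla\rho\dx \;=\; \tfrac{\Phi_0}{4}\int_{-\infty}^{\infty}\dt \;=\; +\infty,
\end{equation*}
so $u_{1/2}\notin L^2(\Omega, W\dx)$ and the Rayleigh quotient has no minimizer. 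For the spectral statement (d), under the positivity hypothesis $W>0$ on $\hat\Omega\setminus\{\xi_0,\xi_1\}$, translated dilates of the null-sequence in $\rho$-coordinates serve as Weyl sequences for each $\lambda\ge 1$, giving $\sigma=\sigma_{\mathrm{ess}}=[1,\infty)$.

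The main obstacle lies in making the null-sequence construction rigorous: the level sets $\{\rho=t\}$ may be singular where $\nabla\rho$ vanishes, the Martin functions $u_0,u_1$ are in general only weak solutions, and the auxiliary cutoff toward the regular part of $\partial\Omega$ must be absorbed without spoiling the $O(1/n)$ estimate. These issues are handled by smooth approximation of $F_n$ and of $u_0,u_1$, a Sard-type smoothing of level sets, and, crucially, the decay of $u_0u_1$ at the regular boundary, controlled using the disjoint neighborhoods $D_0,D_1$ of $\xi_0,\xi_1$ postulated in the theorem.
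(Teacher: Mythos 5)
This theorem is stated here as a citation of \cite[Theorem~11.6]{DFP}; the present paper does not give its own proof, so the only meaningful comparison is against the argument in that reference. Your reconstruction follows the same strategy used there: the ground-state transform giving the inequality with $\lambda_0\ge 1$, the identity $W=\tfrac14 A\nabla\rho\cdot\nabla\rho$ (which is exactly the supersolution construction of \cite[Lemma~5.1]{DFP}), the divergence-free flux $X=u_0A\nabla u_1-u_1A\nabla u_0$ with constant flux $\Phi_0$ across level sets of $\rho$, the null-sequence $u_{1/2}F_n(\rho)$ controlled by the coarea formula, null-criticality via $\int W u_{1/2}^2=\tfrac{\Phi_0}{4}\int_{\R}\dt=\infty$, and translated ``bump'' sequences in the $\rho$-variable as Weyl sequences for the spectral claim. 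All of these are the correct ingredients and essentially match \cite{DFP}.

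One small adjustment worth making: on the regular part of $\partial\Omega$ the field $X$ is not merely tangent --- both $u_0$ and $u_1$ vanish there (in the Martin/minimal-growth sense), so $X$ itself degenerates; what is actually needed, and what the structural assumptions (regularity of $\partial\Omega\setminus\{\xi_0,\xi_1\}$, local regularity of the coefficients, and the separating domain $D$ with disjoint neighborhoods $D_0,D_1$) are there to provide, is a quantitative decay estimate on $u_0u_1|\nabla\rho|^2$ near the regular boundary so that the boundary cutoff contributes $o(1)$ and the flux identity $\Phi(t)\equiv\Phi_0$ survives integration by parts. You correctly flag this as the main technical point. The other place to be careful is the claim $\Phi_0>0$: this is where the linear independence of $u_0,u_1$ (equivalently, the subcriticality of $P$ together with \eqref{u1u0bs}) enters, and it is what converts the formal coarea computation into the genuine divergence $\int W u_{1/2}^2\dx=\infty$.

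Also note that part (b) of optimality (the best constant in neighborhoods of $\xi_0$ and of $\xi_1$ separately) does not follow directly from criticality of $P-W$; one must run the null-sequence argument with $F_n$ supported near $t\to-\infty$ and near $t\to+\infty$ separately, using the disjoint ends $D_0,D_1$. This is implicit in your Weyl-sequence remark for part (d), but deserves to be said explicitly for (b).

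Overall the proposal correctly reproduces the method of the cited reference; the only caveats are the ones you already identified, with the slight sharpening above on the role of the boundary decay and the linear independence of $u_0,u_1$.
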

The following example illustrates Theorem~\ref{thm_bs} and motivated our present study.
\begin{example}[{\cite[Example~11.1]{DFP}}]\label{ex1}
{\em
Let $P=P_0:=-\Gd$, and consider the cone $\Gw$ with vertex at the origin, given by
\begin{equation}\label{def_cone}
    \Gw:=\{x\in \mathbb{R}^n\mid  r(x)>0,\gw(x)\in \Gs\}\,,
\end{equation}
where $\Gs$ is a Lipschitz domain on the unit sphere $\mathbb{S}^{n-1}\subset \mathbb{R}^n$, $n\geq 2$, and $(r,\gw)$ denotes the spherical coordinates of $x$ (i.e., $r=|x|$, and $\gw=x/|x|$). We assume that $P$ is subcritical in $\Gw$.

Let $\gf$ be the principal eigenfunction of the (Dirichlet) Laplace-Beltrami operator $-\Gd_S$ on $\Gs$ with principal eigenvalue $\gs=\gl_0(-\Gd_S,\mathbf{1},\Gs)$ (for the definition of $\gl_0$ see \eqref{eq_gl_0}), and set $$\gg_\pm:= \frac{2-n \pm \sqrt{(2-n)^2+4\gs}}{2}\,.$$ Then the positive harmonic functions $$u_\pm(r,\gw):=r^{\gg_\pm}\gf(\gw)$$ are the Martin kernels at $\infty$ and $0$ \cite{P94} (see also \cite{An12}).

The function
$$u_{1/2}:=(u_+u_-)^{1/2}=r^{(2-n)/2}\gf(\gw)$$ is a supersolution of the equation $Pu=0$ in $\Gw$ (this is the so called {\em supersolution construction for $P$ in $\Gw$ with the pair $(u_+,u_-)$}).

Consequently, the associated Hardy weight is
$$W(x):=\frac{Pu_{1/2}}{u_{1/2}}=\frac{(n-2)^2+4\gs}{4|x|^2},\, $$ and the corresponding Hardy-type inequality reads as follows
\begin{equation}\label{eq_4n2}
\int_{\Gw}|\nabla \varphi|^2\dx\geq \frac{(n-2)^2+4\gs}{4}\int_{\Gw}\frac{|\varphi|^2}{|x|^2}\dx \qquad \forall\varphi\in C_0^\infty(\Gw).
\end{equation}
It follows from  Theorem~\ref{thm_bs} that $W$ is an optimal Hardy-weight. Note that for $\Gs=\mathbb{S}^{n-1}$ we obtain the classical Hardy inequality in the punctured space. We also remark that the Hardy-type inequality \eqref{eq_4n2} and the {\em global} optimality of the constant
$(n-2)^2/4+\gs$ are not new (cf. \cite{Nz,LLM}).
 }
\end{example}
\noindent Let $$\gd(x)=\gd_\Gw(x):=\mathrm{dist}\,(x,\partial \Gw)$$ be the distance function to the boundary of a domain $\Gw$.

The aim of the present paper is to extend the result in Example~\ref{ex1} to the case of the Hardy operator $$P_\mu:=-\Gd-\frac{\gm}{\gd_\Gw^2(x)} \qquad \mbox{in } \Gw,$$ where $\Gw$ is the cone defined by \eqref{def_cone}, and $\gm\leq \gm_0:=\gl_0(-\Gd,\gd_\Gw^{-2},\Gw)$  under the assumption the $P_\gm$ is subcritical in $\Gw$ (for the definition of $\gl_0$, see \eqref{eq_gl_0}). In particular, we obtain an explicit expressions for the optimal Hardy weight $W$ corresponding to the singular points $0$ and $\infty$, for the associate best Hardy constant, and for the corresponding ground state. Note that since the potential $\gd_\Gw^{-2}(x)$ is singular on $\pd \Gw$, Theorem~\ref{thm_bs} is not applicable for $P_\gm$ with $\gm\neq 0$,  and we had to come up with new techniques and ideas to treat this case. For some recent results concerning sharp Hardy inequalities with boundary singularities see \cite{C,FM,GR} and references therein.

\medskip

The outline of the present paper is as follows. In Section~\ref{sec_pre} we fix the setting and notations, and introduce some basic definitions. In Section~\ref{sec_multi} we use an approximation argument to obtain two positive multiplicative solutions of the equation $P_\gm u=0$ in $\Gw$ of the form $u_\pm(r,w):=r^{\gg_\pm}\gth(\gw)$, while in Section~\ref{nsec_multi} we use the boundary Harnack principle of A.~Ancona~\cite{An87} and the methods in \cite{LP,P94} to get an explicit representation theorem for the positive solutions of the equation $P_\gm u=0$ in $\Gw$ that vanish  (in the potential theory sense) on $\partial\Gw\setminus \{0\}$. The obtained two linearly independent positive multiplicative solutions are the building blocks of the supersolution construction that is used in Section~\ref{sec_main} to prove our main result. In Section~\ref{sec_FTT} we consider a family of Hardy inequalities in the half-space $\R^n_+$ obtained by S. Filippas, A. Tertikas and J. Tidblom \cite{FTT}, and we obtain, for the appropriate case, the optimality of the corresponding weight.

We conclude the paper in Section~\ref{sec_DE} by proving
a closely related  Hardy-type inequality with the best constant for the (nonnegative) operator $P_\gm$ in $\Gw$, where $\Gw$ is a domain in $\R^n$ such that $0\in\partial \Gw$, and $\gd_\Gw$ satisfies (in the weak sense) the linear differential inequality
\begin{equation}
-\Gd \gd_\Gw +\frac{n-1+\sqrt{1-4\mu}}{|x|^2}\big(x\cdot\nabla \gd_\Gw-\gd_\Gw\big) \geq 0\qquad \mbox{in }\Gw.
\end{equation}
Finally, we note that parts of the results of the present paper were announced in \cite{DPP}.
%%%%%%%%%%%%%%%%%%%%%%%%%%%%%%%%%%%%%%%
\section{Preliminaries}\label{sec_pre}
In this section we fix our setting and notations, and introduce some basic definitions. We denote $\R_+:=(0,\infty)$, and
$$\Real^n_+:= \{(x_1,x_2,\ldots, x_n)\in \R^n \mid x_1>0\}.$$
Throughout the paper $\Omega$ is a domain in $\Real^{n}$, where $n\geq 2$. The distance function to the boundary of $\Gw$ is denoted by $\gd_{\Gw}$.
We write $\Gw' \Subset \Omega$ if $\Omega$ is open, $\overline{\Gw'}$ is
compact and $\overline{\Gw'} \subset \Omega$. By an {\em exhaustion} of $\Omega$ we mean a sequence $\left\{\Omega_{k}\right\}$ of smooth, relatively compact domains such that $x_{0} \in \Omega_{1}$, $\Omega_{k} \Subset \Omega_{k+1}$, and $\bigcup_{N=1}^{\infty}\Omega_{k}=\Omega$.

Let $f,g:\Gw \to [0,\infty)$. We denote $f \asymp g$ in $\Gw$ if there exists a positive constant $C$ such that $C^{-1}g \leq f \leq Cg$ in $\Gw$. Also, we write $f\gneqq 0$ in $\Gw$ if $f\geq 0$ in $\Gw$ but $f\neq 0$ in $\Gw$. We denote by $\mathbf{1}$ the constant function taking the value $1$ in $\Gw$.  $B_r(x)$ is the open ball of radius $r$ centered at $x$. If $\Gw$ is a cone and $R>0$, we denote by $A_R$ the annulus
$$A_R:=\{z\in \Gw\mid \frac{R}{2}\leq |z|\leq 2R\}.$$

In the present paper we consider a second-order linear elliptic operator $P$ defined on a domain $\Gw\subset \R^n$, and let $W\gneqq 0$ be a given function. We write $P\geq 0$ in $\Gw$ if the equation $Pu=0$ in $\Gw$ admits a positive (super)solution. Unless otherwise stated it is assumed that $P\geq 0$ in $\Gw$.

Throughout the paper it is assumed that the operator $P$ is {\em symmetric} and locally uniformly elliptic.  Moreover, we assume that coefficients of $P$ and the function $W$ are real valued and locally sufficiently regular in $\Omega$ (see \cite{DFP}). For such an operator $P$, potential $W$,  and $\gl\in\R$, we denote $P_\gl:=P-\gl W$.

The following well known Agmon-Allegretto-Piepenbrink (AAP) theorem holds (see for example \cite{Ag83} and references therein).
\begin{theorem}[The AAP Theorem]\label{thm_AAP}
Suppose that $P$ is symmetric, and let $q$ be the corresponding quadratic form.  Then
$P\geq 0$ in $\Gw$ if and only if $q(\varphi)\geq 0$ for every $\varphi\in C_0^\infty(\Omega)$.
\end{theorem}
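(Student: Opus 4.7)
The plan is to prove the two implications of the AAP theorem separately, following the classical strategy: the direction from the existence of a positive (super)solution to nonnegativity of the quadratic form is handled by a ground-state substitution (a Picone-type identity), while the converse requires an exhaustion argument combined with the Krein--Rutman theorem, elliptic regularity, and the Harnack inequality.

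For the easy direction, assume $u>0$ satisfies $Pu\geq 0$ in $\Gw$. Since $P$ is symmetric we may write it in divergence form $P=-\div(A\grad\cdot)+V$ with $A$ symmetric positive-definite, so that $q(\vgf)=\int_\Gw (A\grad\vgf\cdot\grad\vgf+V\vgf^2)\dx$. Given $\vgf\in C_0^\infty(\Gw)$, set $v:=\vgf/u$, a compactly supported Lipschitz function. Expanding $\grad(uv)=v\grad u+u\grad v$ and integrating the cross-term $2uv\,A\grad u\cdot\grad v$ by parts (legitimate because $v^2$ has compact support), one obtains the identity
\begin{equation*}
q(\vgf)=\int_\Gw u^2\, A\grad v\cdot\grad v\dx +\int_\Gw v^2\, u(Pu)\dx.
\end{equation*}
Both terms on the right are nonnegative, so $q(\vgf)\geq 0$.

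For the converse, fix $x_0\in\Gw$ and an exhaustion $\{\Gw_k\}$ of $\Gw$ with $x_0\in\Gw_1$. On each smooth relatively compact $\Gw_k$ the operator $P$ with zero Dirichlet boundary data has compact resolvent, so by the Krein--Rutman theorem its Dirichlet principal eigenvalue $\gl_k$ is simple and admits a unique positive principal eigenfunction $\vgf_k>0$, which we normalize by $\vgf_k(x_0)=1$. Restricting the hypothesis $q\geq 0$ to test functions supported in $\Gw_k$ and using the variational characterization of $\gl_k$ gives $\gl_k\geq 0$, and domain monotonicity (from the same variational characterization) yields $\gl_k\geq\gl_{k+1}$, so $\gl_k\downarrow\gl_\infty\in[0,\infty)$. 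The Harnack inequality on every compact $K\Subset\Gw$ (with constant depending only on the operator and $K$, not on $k$, once $K\subset\Gw_k$) bounds $\vgf_k$ above and below on $K$ by positive constants independent of $k$; interior Schauder estimates then upgrade this to $C^2$-bounds on $K$. A diagonal subsequence converges in $C^2_{\loc}(\Gw)$ to a positive limit $\vgf$, and passing to the limit in $P\vgf_k=\gl_k\vgf_k$ gives $P\vgf=\gl_\infty\vgf\geq 0$, so $\vgf$ is the sought positive supersolution, establishing $P\geq 0$ in $\Gw$.

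The main obstacle is in the reverse direction, and it is precisely the uniform Harnack control: without positive lower bounds on $\vgf_k$ on compact sets that are independent of $k$, the limit of the normalized principal eigenfunctions could in principle degenerate to zero and fail to produce a positive supersolution. The local regularity and uniform local ellipticity of $P$ assumed in the section are exactly what is needed to apply Harnack and the interior Schauder estimates with constants uniform in $k$, and this is what makes the diagonal extraction succeed.
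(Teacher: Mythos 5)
The paper itself does not prove this statement: the AAP theorem is recorded as a classical fact, with the reader referred to Agmon \cite{Ag83} and references therein, so there is no ``paper's own proof'' to compare against. Your reconstruction is the standard two-part argument for AAP, and it is essentially correct.

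For the forward direction your ground-state substitution is the right device. One small imprecision: $v=\vgf/u$ need not be Lipschitz if $u$ is only a weak supersolution in $W^{1,2}_{\loc}(\Gw)$. What one actually needs is that $\vgf^2/u$ is an admissible test function in the weak formulation of $Pu\geq 0$; this follows because a positive supersolution of a uniformly elliptic equation is locally bounded below (weak Harnack), so on $\supp\vgf$ one has $0<c\leq u\leq C$ and $\vgf^2/u\in W^{1,2}$ with compact support. With that justification the identity $q(\vgf)=\int_\Gw u^2 A\grad v\cdot\grad v\,\dx+\int_\Gw (\vgf^2/u)\,Pu\,\dx\geq 0$ holds and gives the implication. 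For the converse, your exhaustion/Krein--Rutman/Harnack/Schauder scheme is the classical proof, and you have correctly identified the uniform Harnack estimate as the crux. Two minor points worth tightening: first, since $0\leq\gl_k\leq\gl_1$, the lower-order coefficient of $P-\gl_k$ is uniformly bounded on each $K'\Subset\Gw$, which is exactly why the Harnack constant on $K\Subset K'$ can be taken independent of $k$; second, the two-sided bound is obtained directly only on a neighborhood of $x_0$, and one must chain Harnack balls through the connected domain $\Gw$ to get uniform positive lower and upper bounds on an arbitrary $K\Subset\Gw$. Neither point is a gap; both are routine, and the proof as written is sound and is the argument one would find in the references the paper cites.
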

We recall the following definitions.
\begin{definition}{\em  Let $q$ be the quadratic form on
$\core$ associated with a symmetric nonnegative operator $P$ in $\Gw$.
 We say that a sequence $\{\vgf_k\}\subset\core$  of
nonnegative functions is a {\em null-sequence} of the quadratic form
$q$ in $\Omega$, if there exists an open set $B\Subset\Omega$
such that
$$\lim_{k\to\infty}q(\vgf_k)=0,\qquad \mbox{and } \; \int_B|\vgf_k|^2\dx=1.$$
 We say that a positive function $\phi\in
C^\ga_{\mathrm{loc}}(\Omega)$ is a {\em (Agmon) ground state} of the
functional $q$ in $\Omega$ if $\phi$ is an
$L^2_{\mathrm{loc}}(\Omega)$ limit of a null-sequence of $q$ in $\Gw$.
}
\end{definition}
\begin{definition}\label{def:minimal_gr} {\em
 Let $K\Subset \Omega$, and let $u$ be a positive solution of the equation $Pw=0$ in  $\Omega\sm K$. We say that $u$ is {\em a positive solution of  minimal growth in a neighborhood of infinity in $\Omega$}  if for any $K\Subset K'\Subset \Omega$ with smooth boundary  and any (regular) positive supersolution $v\in C((\Omega\sm K')\cup \,\pd K')$ of the equation $Pw=0$ in $\Omega \sm K'$ satisfying  $u \leq v$ on $\pd K'$, we have $u \leq v$ in $\Omega \sm K'$.
}
  \end{definition}
\begin{theorem}[\cite{PT}]\label{thm_crit}
Suppose that $P$ is nonnegative symmetric operator in $\Gw$, and let $q$ be the corresponding quadratic form.  Then the following assertions are equivalent
\begin{itemize}
\item[(i)] The operator $P$ is critical in $\Gw$.
\item[(ii)] The quadratic form admits a null-sequence and a ground state $\phi$ in $\Gw$.
\item[(iii)] The equation $Pu=0$ admits a unique
positive supersolution $\phi$ in $\Gw$.
\item[(iv)] The equation $Pu=0$ admits a positive solution in $\Gw$ of minimal growth in a neighborhood of infinity in $\Omega$.
\end{itemize}
In particular, any ground state is the unique positive (super)solution of the equation $Pu=0$ in $\Gw$, and it has minimal growth in a neighborhood of $\bar{\infty}$.
\end{theorem}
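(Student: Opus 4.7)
The plan is to establish the four equivalences via the implications (i)$\Rightarrow$(ii)$\Rightarrow$(iii)$\Rightarrow$(i) and (iii)$\Leftrightarrow$(iv), and then obtain the concluding ``in particular'' clause from (iii) and (iv). The recurring tools are the symmetry of $P$ (which enables the ground-state substitution $\vgf=v\psi$), Harnack's inequality together with local elliptic regularity (to extract positive limits of sequences of positive solutions), and an exhaustion of $\Gw$ by smooth relatively compact subdomains $\{\Gw_k\}$ with their Dirichlet Green functions.

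For (i)$\Rightarrow$(ii), fix a nontrivial $W\gneqq0$ compactly supported in some $B\Subset\Gw$. Criticality forces $\gl_0(P,W,\Gw)=0$, so there is a minimizing sequence of the Rayleigh quotient; replacing each term by its absolute value (which does not increase the symmetric form $q$) produces a nonnegative null-sequence $\{\vgf_k\}\subset\core$ with $q(\vgf_k)\to 0$ and $\int_B|\vgf_k|^2\dx=1$. Harnack's inequality and local elliptic estimates let one pass to an $L^2_{\mathrm{loc}}$-limit $\phi\ge 0$ with $P\phi=0$; the normalization on $B$ precludes $\phi\equiv 0$, and the strong maximum principle upgrades this to $\phi>0$, so $\phi$ is a ground state.

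For (ii)$\Rightarrow$(iii), given any positive supersolution $v$ of $Pu=0$ I would use the ground-state substitution
\begin{equation*}
q(v\psi)=\int_\Gw v^2|\nabla\psi|^2\dx+\int_\Gw \psi^2\, v\, Pv\dx \ge \int_\Gw v^2|\nabla\psi|^2\dx,
\end{equation*}
valid by symmetry and integration by parts for $v\psi\in\core$. Applied with $\psi_k:=\vgf_k/v$, the hypothesis $q(\vgf_k)\to 0$ forces $\int v^2|\nabla\psi_k|^2\dx\to 0$; combined with $\psi_k\to\phi/v$ in $L^2_{\mathrm{loc}}$, this makes $\phi/v$ locally constant, so $v=c\phi$ by connectedness of $\Gw$. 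Conversely (iii)$\Rightarrow$(i) by contraposition: if $P$ were subcritical, by the AAP theorem there would be a positive supersolution $\tilde\phi$ of $(P-W)u=0$ for some $W\gneqq0$, which would be a strict positive supersolution of $Pu=0$ linearly independent of any positive solution, contradicting uniqueness.

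For (iii)$\Rightarrow$(iv), fix a compact neighborhood $K\Subset\Gw_1$ of $x_0$ and on each $\Gw_k\sm\overline{K}$ solve $Pw_k=0$ with $w_k=\phi$ on $\pd K$ and $w_k=0$ on $\pd\Gw_k$. The sequence is monotone increasing and bounded above by $\phi$ (maximum principle), hence converges to some $w_\infty\le\phi$ in $\Gw\sm\overline{K}$; extending by $\phi$ across $\overline{K}$ gives a positive supersolution of $Pu=0$ on $\Gw$ dominated by $\phi$, so (iii) forces $w_\infty=\phi$, and by construction this identifies $\phi$ as a positive solution of minimal growth at infinity. For the reverse (iv)$\Rightarrow$(iii), any positive supersolution $v$ satisfies $c_0\phi\le v$ on $\pd K'$ for small $c_0>0$ and some $K'\Subset\Gw$; by minimal growth $c_0\phi\le v$ on $\Gw\sm K'$, and by the maximum principle inside $K'$ as well. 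Taking $c:=\inf_\Gw v/\phi>0$ and applying the strong minimum principle to the nonnegative supersolution $v-c\phi$ yields $v=c\phi$. The concluding clause follows at once: any ground state is a positive supersolution, hence by (iii) the unique one up to scaling, and by (iv) it has minimal growth at $\bar\infty$. The most delicate step is the compactness argument in (i)$\Rightarrow$(ii): one must arrange the normalization so that the nonnegative minimizing sequence does not ``escape to infinity'', and the local Harnack estimates then yield a nontrivial positive limit.
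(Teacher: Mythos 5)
The paper does not prove this statement; it cites it verbatim from Pinchover and Tintarev \cite{PT}, so there is no internal proof to compare against. Your overall scheme --- the cycle (i)$\Rightarrow$(ii)$\Rightarrow$(iii)$\Rightarrow$(i) together with (iii)$\Leftrightarrow$(iv) --- is the standard one from that reference and is essentially sound, but there is one genuine gap. In (i)$\Rightarrow$(ii) you propose to extract an $L^2_{\loc}$ limit from the null sequence $\{\vgf_k\}$ by ``Harnack's inequality and local elliptic estimates.'' This does not go through as stated: the $\vgf_k$ are compactly supported test functions, not solutions of $Pu=0$, so neither Harnack nor interior regularity says anything about them directly. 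The compactness actually comes from the ground-state substitution you invoke later. Since $P\geq 0$, the AAP theorem provides a positive supersolution $u$, and
$$q(\vgf_k)\geq\int_\Gw u^2\bigl|\nabla(\vgf_k/u)\bigr|^2\dx;$$
together with the normalization $\int_B\vgf_k^2\dx=1$ this bounds $\vgf_k/u$ in $W^{1,2}_{\loc}$, and any weak limit has vanishing gradient, hence equals a positive constant $c$ on the connected domain $\Gw$. Thus $\vgf_k\to cu$ in $L^2_{\loc}$ and $\phi:=cu$ is the ground state. One must also record that $u$ is in fact a \emph{solution}, not merely a supersolution: if $Pu\gneqq 0$ then $W:=Pu/u\gneqq 0$ satisfies $P-W\geq 0$ by AAP, contradicting criticality.

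Once (i)$\Rightarrow$(ii) is repaired in this way, the remaining steps are correct in outline. Your (ii)$\Rightarrow$(iii) needs the same weak-compactness remark made explicit (the weak $W^{1,2}_{\loc}$ limit and the $L^2_{\loc}$ limit of $\psi_k=\vgf_k/v$ must coincide, which is what forces $\phi/v$ to be locally constant), the contrapositive (iii)$\Rightarrow$(i) via AAP is fine (a subcritical $P$ has both a positive solution and a strict positive supersolution, hence two linearly independent positive supersolutions), and the exhaustion construction for (iii)$\Rightarrow$(iv) and the comparison argument via $c:=\sup\{t\geq 0: t\phi\leq v\}$ for (iv)$\Rightarrow$(iii) are standard and sound as you sketch them.
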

%%%%%%%%%%%%%%%
Let $P$ and $W\gneqq 0$ be as above, the {\em generalized principal eigenvalue} is defined by
\begin{equation}\label{eq_gl_0}
\lambda_0: =\lambda_0(P,W,\Omega):=\sup\Big\{\gl\in\Real\mid P_\gl=P- \gl W\geq 0 \; \mbox{ in }  \Gw \Big\}.
\end{equation}
We also define
$$\lambda_\infty=\lambda_\infty(P,W,\Omega):=\sup\Big\{\gl\in\Real\mid \exists K\subset
\subset\Gw\mbox{ s.t. } P_\gl\geq 0 \; \mbox{ in }  \Gw\setminus K \Big\}.$$
Recall that if the operator $P$ is symmetric in $L^2(\Omega,\dx)$,  and $W>0$, then $\lambda_0$ (resp. $\lambda_\infty$) is the infimum of the $L^2(\Gw, \, W\mathrm{d}x)$-spectrum (resp. $L^2(\Gw, \, W\mathrm{d}x)$-essential spectrum) of the Friedrichs extension of the operator $\tilde{P}:=W^{-1}P$  (see for example \cite{Ag83} and references therein). Note that $\tilde{P}$ is symmetric on $L^2(\Gw, \, W\mathrm{d}x)$, and has the same quadratic form as $P$.
\begin{definition}\label{def_mean_conv}{\em
 Let $\Gw\subsetneqq \R^n$ be a domain. We say that $\Gw$ is {\em  weakly mean convex} if $\gd_{\Gw}$ is weakly superharmonic in $\Gw$.
 }
\end{definition}
Recall that $\gd_{\Gw}\in W^{1,2}_\loc(\Gw)$. Also, any convex domain is of course weakly mean convex, and if $\partial \Gw\in C^2$, then $\Gw$ is weakly mean convex if and only if the mean curvature at any point of $\partial \Gw$ is nonnegative  (see for example \cite{Ps}).

\medskip

Throughout the paper we fix a cone
\begin{equation}\label{def_cone1}
    \Gw:=\{x\in \mathbb{R}^n\mid  r(x)>0,\gw(x)\in \Gs\}\,,
\end{equation}
where $\Gs$ is a Lipschitz domain in the unit sphere $\mathbb{S}^{n-1}\subset \mathbb{R}^n$, $n\geq 2$. For $x\in \Sigma$, we will denote $\mathrm{d}_\Sigma(x)$ the (spherical) distance from $x$ to the boundary of $\Sigma$. Note that $\gd_{\Gw}$ is clearly a homogeneous function of degree $1$, that is,
\begin{equation}\label{eq_homo}
\gd_{\Gw}(x)=|x|\gd_{\Gw}\Big(\frac{x}{|x|}\Big)=r\gd_{\Gw}(\gw).
\end{equation}
Since the distance function to the boundary of any domain is Lipschitz continuous, Euler's homogeneous function theorem implies that
\begin{equation}\label{eq_Eu_homo1}
x\cdot \nabla \gd_{\Gw}(x)= \gd_{\Gw}(x)\hspace{1em}\mbox{a.e. in }\Gw.
\end{equation}
In fact, Euler's theorem characterizes all sufficiently smooth positive homogeneous functions. Hence, (\ref{eq_Eu_homo1}) characterizes the cones in $\Rn$.
For spectral results and Hardy inequalities with homogeneous weights on $\R^n$ see \cite{HOL}.

We note that if $\Gs$ is $C^2$,  then
\begin{equation}\label{eq_homo3}
\delta_\Omega(\gw) =\sin \big(\mathrm{d}_\Sigma(\gw)\big)\; \mbox{ near the boundary of }\Gs.
\end{equation}
Indeed, for $\gw\in \Sigma$, let $z\in \partial\Omega$ such that $|z-\gw|=\gd_\Gw(\gw)$, and let $y\in \partial\Sigma$ realizes ${\rm d}_{\Gs}(\gw)$. Since $\Gs$ is $C^2$, if $\gw$ is close enough to $\partial\Gs$, then $z$ is unique and $\neq 0$, and the points $0,z,y$ are collinear. Moreover, the acute angle between the vectors $\overrightarrow{0y}$ and $\overrightarrow{0\gw}$ is equal to $\mathrm{d}_\Gs(\gw)$. Given that $\overrightarrow{0z}$ is orthogonal to $\overrightarrow{\gw z}$, by elementary trigonometry in the triangle $0,\gw,y$, one gets that $\delta_\Omega(\gw) =\sin \big(\mathrm{d}_\Sigma(\gw)\big)$.

Let $\Gd_S$ be the Laplace-Beltrami operator on the unit sphere $S:=\mathbb{S}^{n-1}$. Then in spherical coordinates, the operator $$P_\gm:=-\Gd-\frac{\gm}{\gd_{\Gw}^{2}} $$ has  the following skew-product form
\begin{equation}\label{eq_pm_spher}
P_\gm u(r,\gw)= -\frac{\partial^2 u}{\partial r^2}-\frac{n-1}{r}\frac{\partial u}{\partial r}+\frac{1}{r^2}\Big(-\Gd_S u-\gm\frac{u}{\gd_\Gw^2(\gw)}\Big) \qquad r>0, \,\gw\in \Gs.
\end{equation}

For any Lipschitz cone the Hardy inequality holds true (as in the case of sufficiently smooth bounded domain \cite{MMP}). We have
\begin{lemma}\label{lem_gl0}
Let $\Gw$ be a Lipschitz cone, and let $\gm_0:=\gl_0(-\Gd,\gd_\Gw^{-2},\Gw)$. Then
\begin{equation}\label{eq_gl0}
0<\gm_0\leq \frac{1}{4} \,.
\end{equation}
In other words, the following Hardy inequality holds true.
\begin{equation}\label{cls_hardy}
 \int_{\Omega}|\nabla \varphi|^2\dx \geq \gm_0\int_{\Omega} \frac{|\varphi|^2}{\gd_\Gw^2}\dx \qquad \forall \varphi\in C_0^\infty(\Omega),
 \end{equation}
where $0<\gm_0\leq 1/4$ is the best constant.

Moreover,  if  $\Gw$ is a weakly mean convex domain, then $\gm_0= {1}/{4}$.
\end{lemma}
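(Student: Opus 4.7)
The lemma has three assertions: $\mu_0>0$, $\mu_0\le 1/4$, and $\mu_0=1/4$ under weak mean convexity. I would attack them separately, starting with the two bounds on the generic Lipschitz cone and then specializing to the mean convex case.

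For $\mu_0>0$, the plan is simply to invoke the Hardy inequality for Lipschitz domains (as in \cite{MMP}): a Lipschitz cone $\Omega$ is, in particular, a uniform domain with a Lipschitz boundary, so there exists some $c>0$ such that
\[
\int_\Omega |\nabla \varphi|^2 \dx \geq c \int_\Omega \frac{|\varphi|^2}{\delta_\Omega^2}\dx \qquad \forall \varphi\in C_0^\infty(\Omega),
\]
which by definition of $\lambda_0$ gives $\mu_0\geq c>0$. The only subtlety is pointing out that the vertex $0$ does not cause trouble: since the test functions have compact support in $\Omega$, they stay uniformly away from $0$, and the Hardy inequality for the Lipschitz part of the boundary is the only thing that matters.

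For the upper bound $\mu_0\le 1/4$, the plan is a localization/blow-up argument at a smooth boundary point. Fix a point $x_0\in\partial\Omega\setminus\{0\}$ where the boundary is (locally) a Lipschitz graph that can be rectified to a hyperplane up to a small error, i.e., in a small neighborhood $U$ of $x_0$ the domain looks, after a bi-Lipschitz change of variables, like a half-space and $\delta_\Omega$ is comparable to the distance to the hyperplane. Into this neighborhood insert the standard family of test functions $\varphi_\varepsilon$ concentrated at distance $\varepsilon$ from $\partial\Omega$ that saturate the half-space Hardy inequality
\[
\int_{\R^n_+} |\nabla \varphi|^2 \dx \geq \tfrac{1}{4}\int_{\R^n_+} \frac{|\varphi|^2}{x_1^2}\dx.
\]
Transporting back and letting $\varepsilon\to 0$, only the leading behaviour near $x_0$ matters and the Lipschitz error is negligible, producing a minimizing sequence with Rayleigh quotient tending to $1/4$. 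Hence $\mu_0\le 1/4$.

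For the remaining statement, the plan is the classical Moss--Piepenbrink style supersolution argument: test with $u=\delta_\Omega^{1/2}$. Since $|\nabla \delta_\Omega|=1$ almost everywhere in $\Omega$, a formal computation gives
\[
-\Delta u - \tfrac{1}{4}\frac{u}{\delta_\Omega^2}
=-\tfrac{1}{2}\delta_\Omega^{-1/2}\Delta \delta_\Omega + \tfrac{1}{4}\delta_\Omega^{-3/2}|\nabla\delta_\Omega|^2 -\tfrac{1}{4}\delta_\Omega^{-3/2}
= -\tfrac{1}{2}\delta_\Omega^{-1/2}\Delta \delta_\Omega,
\]
which by the weak mean convexity hypothesis $-\Delta \delta_\Omega \geq 0$ is nonnegative. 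The main technical obstacle is giving meaning to this identity in the weak sense, because $\delta_\Omega^{1/2}$ is only in $W^{1,2}_{\loc}$ and $\Delta \delta_\Omega$ is a measure; this is handled by a standard approximation, multiplying by $\delta_\Omega^\eta$ with $\eta\to 0$ and invoking $|\nabla \delta_\Omega|=1$ a.e., as in \cite{Ps}. The resulting positive weak supersolution $u$ of the equation $(-\Delta-\tfrac{1}{4}\delta_\Omega^{-2})u=0$ together with the AAP Theorem \ref{thm_AAP} gives $\mu_0\geq 1/4$, and combined with the upper bound yields $\mu_0=1/4$.
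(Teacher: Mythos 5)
Your treatment of the upper bound and of the weakly mean convex case is fine in substance (the paper simply cites \cite{MMP} and \cite{BFT,MMP} for these, so you are essentially reproducing the standard arguments: note only that for the blow-up at a boundary point you need a point of \emph{differentiability} of $\partial\Gw$, supplied by Rademacher's theorem, not merely a bi-Lipschitz chart --- with a bi-Lipschitz rectification alone the limiting constant need not be $1/4$). The genuine gap is in your proof that $\gm_0>0$. A cone over a Lipschitz spherical domain is unbounded and, in general, is \emph{not} a Lipschitz domain near its vertex (take for $\Gs$ a tubular neighborhood of a great circle: two boundary sheets meet at $0$), nor near infinity; so you cannot simply invoke the Hardy inequality ``for Lipschitz domains'' as in \cite{MMP}, whose results are for bounded domains with locally Lipschitz graph boundary. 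Moreover, the remark that test functions have compact support and hence stay away from the vertex misses the point: the constant $c$ must be uniform over \emph{all} $\vgf\in C_0^\infty(\Gw)$, whose supports may shrink towards $0$ or escape to infinity, and it is precisely this uniformity across scales that needs an argument.

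The paper closes exactly this gap by a scaling argument: one first gets $\gl_0(-\Gd,\gd_{\Gw}^{-2},\Gw_R)>0$ for the truncated cones $\Gw_R$ (bounded domains, where the quoted results apply), then observes that by the homogeneity $\gd_\Gw(x)=r\,\gd_\Gw(\gw)$ the quantity $\gl_0(-\Gd,\gd_{\Gw}^{-2},\Gw_R)$ is independent of $R$, and finally uses the exhaustion property $\gl_0(P,W,\Gw)=\lim_k\gl_0(P,W,\Gw_k)$ to conclude $\gm_0=\gl_0(-\Gd,\gd_{\Gw}^{-2},\Gw_1)>0$. Your proposal contains no substitute for this step (an alternative would be to verify uniform $2$-fatness of the complement of the cone and invoke the Lewis--Ancona--Wannebo type Hardy inequality, but you would then have to check the fatness at all scales, which again rests on the scale invariance you have not used). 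As written, the positivity assertion is not established.
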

\begin{proof}
Using Rademacher's theorem it follows that $\partial \Gw$ admits a tangent hyperplane almost everywhere in $\partial \Gw$. Hence, \cite[Theorem~5]{MMP} implies  that
$$\gm_0=\lambda_0(-\Gd,\gd_\Gw^{-2},\Omega)\leq \lambda_\infty(-\Gd,\gd_\Gw^{-2},\Omega)\leq \frac{1}{4}\,. $$
We claim that $\gm_0>0$. Indeed, denote by $\Gw_R$ the truncated cone
\begin{equation}\label{TC}
\Gw_R:=\{x\in \mathbb{R}^n\mid  0<r<R,\;\; \gw\in \Gs\},
\end{equation}
then
$$0<\gl_{0,R}:=\gl_0(-\Gd,\gd_{\Gw_R}^{-2},\Gw_R),$$
(see for example, \cite{N,MMP}). By comparison,
$$\gm_0\leq\gl_0(-\Gd,\gd_{\Gw}^{-2},\Gw_R), \quad \mbox{and} \quad  0<\gl_{0,R}\leq  \gl_0(-\Gd,\gd_{\Gw}^{-2},\Gw_R).$$
It is well known that if $\{\Gw_k\}$ is an exhaustion of $\Gw$, then
$$\lim_{k\to\infty}  \gl_0(P,W,\Gw_k)=\gl_0(P,W,\Gw).$$
Hence, $$\lim_{R\to\infty}  \gl_0(-\Gd,\gd_{\Gw}^{-2},\Gw_R)=\gm_0 .$$
On the other hand, since $\gd_{\Gw}$ is homogeneous of order $1$, it follows that $\gl_0(-\Gd,\gd_{\Gw}^{-2},\Gw_R)$
is $R$-independent. Therefore,
$$0<\gl_{0,1} \leq \gl_0(-\Gd,\gd_{\Gw}^{-2},\Gw_1)=\gl_0(-\Gd,\gd_{\Gw}^{-2},\Gw_R)=\lim_{R\to\infty}  \gl_0(-\Gd,\gd_{\Gw}^{-2},\Gw_R)=\gm_0 .$$
Consequently,
$$\gm_0=\gl_0(-\Gd,\gd_{\Gw}^{-2},\Gw_R)>0.$$

Assume further that $\Gw$ is a convex cone, or even a weakly mean convex cone. Then it is well known that $\gm_0= {1}/{4}$ (see for example \cite{BFT,MMP}).
\end{proof}
\begin{remark}\label{pr1}{\em
Clearly, $P_\gm$ is subcritical in $\Gw$ for all $\gm<\gm_0$, and by Proposition~\ref{pro_moztkin}, $P_{1/4}$ is subcritical in a weakly mean convex cone. We show in Theorem~\ref{thm_opt_hardy2} that if $\gm_0<1/4$ and $\Gs\in C^2$, then the operator $P_{\gm_0}$ is critical in the cone $\Gw$ (cf. \cite[Theorem~II]{MMP}).
}
\end{remark}
%%%%%%%%%%%%%%%%%%%%%%%%%%%
%%%%%%%%%%%%%%%%%%%%%%%%%%%
\section{Positive multiplicative solutions}\label{sec_multi}
As above, let $\Gw$ be a Lipschitz cone. By Lemma~\ref{lem_gl0} the generalized principal eigenvalue $\gm_0:=\gl_0(-\Gd,\gd_\Gw^{-2},\Gw)$ satisfies
$0<\gm_0\leq 1/4$.  We have
\begin{theorem}\label{thm_etreme}
Let $\gm\leq\gm_0$. Then the equation $P_\gm u=0$ in $\Gw$ admits positive solutions of the form
\begin{equation}\label{eq_multi1}
    u_\pm(x)=|x|^{\gg_{\pm}}\phi_\gm\Big(\frac{x}{|x|}\Big),
    \end{equation}
where
$\phi_\gm$ is a positive solution of the equation
\begin{equation}\label{eq_pm_spher4}
\Big(-\Gd_S -\frac{\gm}{\gd_\Gw^2(\gw)}\Big)\phi_\gm=\gs(\gm) \phi_\gm \qquad\mbox{in } \Gs,
\end{equation}
\begin{equation}\label{eqgsgm}
    -\frac{(n-2)^2}{4}\leq  \gs(\gm):=\gl_0\Big(-\Gd_S -\frac{\gm}{\gd_\Gw^2}, \mathbf{1},\Gs\Big),
\end{equation}
and
\begin{equation}\label{eq_gb}
\gg_{\pm}:= \frac{2-n \pm\sqrt{(n-2)^2+4\gs(\gm)}}{2}\,.
\end{equation}

Moreover, if $\gs(\gm) > -(n-2)^2/4$, then there are two linearly independent positive solutions of the equation $P_\gm u=0$ in $\Gw$ of the form \eqref{eq_multi1}, and $P_\gm$ is subcritical in $\Gw$.

In particular, for any $\gm\leq \gm_0$ we have $\gs(\gm)>-\infty$.
\end{theorem}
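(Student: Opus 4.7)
My plan is to reduce the PDE on the cone to a spherical eigenvalue problem on $\Sigma$ via separation of variables, and then to obtain the bound $\sigma(\mu) \geq -(n-2)^2/4$ from the nonnegativity of $P_\mu$ in $\Omega$ by constructing appropriate radially-modulated test functions.

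First I would plug the ansatz $u(r,\omega)=r^{\gamma}\phi(\omega)$ into the skew-product representation \eqref{eq_pm_spher}. A direct computation reduces $P_\mu u=0$ to the spherical equation
\begin{equation*}
\Bigl(-\Delta_S-\frac{\mu}{\delta_\Omega^2(\omega)}\Bigr)\phi=\gamma(\gamma+n-2)\,\phi\qquad\text{in }\Sigma,
\end{equation*}
so that $\sigma=\gamma(\gamma+n-2)$ gives the two roots $\gamma_\pm$ in \eqref{eq_gb}. Thus the theorem reduces to producing a positive solution $\phi_\mu$ of this spherical problem at the generalized principal eigenvalue $\sigma(\mu)$, and to showing that $\sigma(\mu)\geq -(n-2)^2/4$ whenever $\mu\leq\mu_0$.

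The main obstacle is precisely the latter bound, and I would attack it by transplanting the nonnegativity of $q_\mu$ on $\Omega$ into an inequality on $\Sigma$. Fix $\phi\in C_0^\infty(\Sigma)$ and a cutoff $\chi\in C_0^\infty(\mathbb{R})$, and test $q_\mu$ on $v(r,\omega):=r^{-(n-2)/2}\chi(\log r)\phi(\omega)$. After substituting $t=\log r$ and using the homogeneity $\delta_\Omega(x)=r\delta_\Omega(\omega)$ from \eqref{eq_homo}, the cross term $\int\chi\chi'\,\mathrm dt$ vanishes and the quadratic form separates as
\begin{equation*}
q_\mu(v)=\Bigl(\int_{\mathbb{R}}\chi^2\,\mathrm dt\Bigr)\Bigl[\int_\Sigma\bigl(|\nabla_S\phi|^2-\tfrac{\mu}{\delta_\Omega^2}\phi^2\bigr)\mathrm d\omega+\tfrac{(n-2)^2}{4}\int_\Sigma\phi^2\,\mathrm d\omega\Bigr]+\Bigl(\int_{\mathbb{R}}\chi'^2\,\mathrm dt\Bigr)\int_\Sigma\phi^2\,\mathrm d\omega.
\end{equation*}
Since $\mu\leq\mu_0$ yields $P_\mu\geq 0$ in $\Omega$, the left-hand side is nonnegative. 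Choosing $\chi=\chi_0(\cdot/R)$ with $R\to\infty$ forces $\int\chi'^2/\int\chi^2\to 0$, leaving
\begin{equation*}
\int_\Sigma\bigl(|\nabla_S\phi|^2-\tfrac{\mu}{\delta_\Omega^2}\phi^2\bigr)\mathrm d\omega+\tfrac{(n-2)^2}{4}\int_\Sigma\phi^2\,\mathrm d\omega\geq 0\qquad\forall\phi\in C_0^\infty(\Sigma),
\end{equation*}
which by the AAP theorem on $\Sigma$ is exactly $\sigma(\mu)\geq -(n-2)^2/4$. In particular $\sigma(\mu)>-\infty$, and the discriminant in \eqref{eq_gb} is nonnegative, so $\gamma_\pm\in\mathbb{R}$.

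Next I would build $\phi_\mu$ by an exhaustion argument. Take smooth subdomains $\Sigma_k\Subset\Sigma$ with $\Sigma_k\nearrow\Sigma$, so that $\delta_\Omega^{-2}$ is bounded on each $\Sigma_k$ and standard spectral theory gives a positive Dirichlet principal eigenfunction $\phi_k$ with eigenvalue $\sigma_k(\mu)\searrow\sigma(\mu)$; normalize $\phi_k(\omega_0)=1$ at a fixed $\omega_0\in\Sigma_1$. The Harnack inequality on relatively compact subsets of $\Sigma$, together with interior elliptic estimates, gives a subsequence converging locally uniformly to a positive solution $\phi_\mu$ of \eqref{eq_pm_spher4}. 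Setting $u_\pm(x):=|x|^{\gamma_\pm}\phi_\mu(x/|x|)$ then yields positive solutions of $P_\mu u=0$ in $\Omega$.

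Finally, if $\sigma(\mu)>-(n-2)^2/4$ then $\gamma_+\neq\gamma_-$, so $u_+$ and $u_-$ are linearly independent positive solutions whose ratio $u_+/u_-=|x|^{\gamma_+-\gamma_-}$ is nonconstant. By Theorem~\ref{thm_crit}(iii), this precludes criticality, and since $P_\mu\geq 0$ by $\mu\leq\mu_0$, the operator $P_\mu$ must be subcritical in $\Omega$. I expect the technical subtlety to lie in Step~2 (justifying the integration by parts and the limit $R\to\infty$ despite the singular potential $\delta_\Omega^{-2}$), but the singularity is neutralized because the weight $\delta_\Omega^{-2}(\omega)$ is extracted cleanly by the homogeneity of $\delta_\Omega$ and the test functions are compactly supported in $\Sigma$.
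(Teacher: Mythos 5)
Your proposal is correct, and it reaches the crucial bound $\sigma(\mu)\geq -(n-2)^2/4$ by a genuinely different and more elementary route than the paper. The paper works on the truncated cones $\mathcal{W}_k$ over an exhaustion $\Sigma_k\Subset\Sigma$ and invokes the structure theory of positive solutions (the Fuchsian-singularity result of \cite{P94} together with the group-invariance theorems of Section~8 of \cite{LP}): the convex set $\mathcal{K}^0_{P_\mu}(\mathcal{W}_k)$ has exactly two extreme points, these must be multiplicative solutions $r^{\gamma_{\pm,k}}\phi_k(\omega)$ with real and distinct exponents, which forces $\sigma_k>-(n-2)^2/4$, and the theorem follows by letting $k\to\infty$. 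You instead obtain the bound directly at the level of quadratic forms, testing $q_\mu$ on separated functions $r^{-(n-2)/2}\chi(\log r)\phi(\omega)$ and letting the logarithmic cutoff spread out -- essentially the converse direction of the Fubini computation in Remark~\ref{re_Fubini} -- and then you build $\phi_\mu$ by the same exhaustion/Harnack limit that the paper uses. Your computation of $q_\mu(v)$ is right: the cross term is the integral of $(\chi^2)'$ and vanishes, $v\in C_0^\infty(\Omega)$ so the singular potential causes no difficulty, and the scaling $\chi=\chi_0(\cdot/R)$ kills the radial correction term; the AAP theorem on $\Sigma$ then converts the resulting form inequality into $\sigma(\mu)\geq-(n-2)^2/4$. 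The concluding step is also the right argument: two nonproportional positive solutions exclude criticality by Theorem~\ref{thm_crit}(iii), and a nonnegative noncritical operator is subcritical. What your route gives up is the structural byproduct that the paper's machinery provides and reuses later, namely that the extreme points of the solution cones are precisely the multiplicative solutions, which is the backbone of Theorem~\ref{nthm_etreme}; what it buys is a self-contained proof avoiding the Martin-boundary/group-action results of \cite{P94,LP}. Two small points you should make explicit: the case $\mu=\mu_0$ of ``$q_\mu\geq0$'' follows by letting $\lambda\uparrow\mu_0$ in the definition \eqref{eq_gl_0}, and the convergence $\sigma_k(\mu)\searrow\sigma(\mu)$ is the standard exhaustion property of the generalized principal eigenvalue (also quoted in the paper's proof).
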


\begin{proof}
We first note that if $u$ is a positive solution of the form  \eqref{eq_multi1}, then clearly $\phi_\gm>0$ and $\phi_\gm$ solves \eqref{eq_pm_spher4}, and $\gg_\pm$ satisfies \eqref{eq_gb}.

Fix a reference point $x_1\in \Gw\cap \mathbb{S}^{n-1}$, and consider an {\em exhaustion} $\{\Gs_k\}_{k=1}^{\infty}\subset\Gs \subset \mathbb{S}^{n-1}$ of $\Gs$ (i.e., $\{\Gs_k\}_{k=1}^{\infty}$ is a sequence of
smooth, relatively compact domains in $\Gs$ such that
$x_1\in \Gs_{k}\Subset \Gs_{k+1}$ for $k\geq 1$, and
$\cup_{k=1}^{\infty}\Gs_{k}=\Gs$).

Fix $\gm\leq\gm_0$. For  $k\geq 1$, and denote the cone
$$ \mathcal{W}_k:=\big\{x\in \mathbb{R}^n\mid  r>0,\;\;\gw\in \Gs_k\big\}\,.$$
Consider the convex set $\mathcal{K}^0_{P_\gm}(\mathcal{W}_k)$ of all positive solutions $u$ of the equation $P_\gm u=0$ in $\mathcal{W}_k$ satisfying the Dirichlet boundary condition $u=0$ on $\partial \mathcal{W}_k\setminus \{0\}$, and the normalization condition $u(x_1)=1$.

Clearly, for $\gm\leq \gm_0$ we have
$$\gm\leq  \lambda_0(-\Gd,\gd_{\Gw}^{-2},\mathcal{W}_k)=\sup\big\{\gl\in\Real\mid \mathcal{K}^0_{P_\gm}(\mathcal{W}_k)\neq \emptyset \big\}.$$
Moreover, $P_\gm$ is subcritical in $\mathcal{W}_k$, and has Fuchsian-type singularities at the origin and at infinity. Hence, in view of \cite[Theorem~7.1]{P94}, it follows that $\mathcal{K}^0_{P_\gm}(\mathcal{W}_k)$, which is a convex compact set in the compact-open topology, has exactly two extreme points.

Next, we characterize the two extreme points of $\mathcal{K}^0_{P_\gm}(\mathcal{W}_k)$ using two different approaches.

\noindent {\bf First method:} We use the results of Section~8 of \cite{LP}. Consider the multiplicative group $\mathcal{G}:=\R^\star$ of all positive real numbers. Then $\mathcal{G}$ acts on $\overline{\mathcal{W}_k}\setminus \{0\}$ (and also on $\overline{\Gw}\setminus \{0\}$) by homotheties $x\mapsto sx$, where $s \in \mathcal{G}$ and $x\in \overline{\mathcal{W}_k}\setminus \{0\}$. This is a compactly generating (cocompact) abelian group action, and $P_\gm$ is an invariant elliptic operator with respect to this action on $\mathcal{W}_k$. In spherical coordinates, a positive $\mathcal{G}$-multiplicative function on $\mathcal{W}_k$ is of the form
\begin{equation}\label{eq_multi}
    f(r,\gw)=r^\gg\phi(\gw),
\end{equation}
where $\gg\in \R$. We note that positive solutions in $\mathcal{K}^0_{P_\gm}(\mathcal{W}_k)$ satisfy a uniform boundary Harnack principle on $\partial\mathcal{W}_k\setminus \{0\}$. Recall that $\mathcal{K}^0_{P_\gm}(\mathcal{W}_k)$ has exactly two extreme points. Hence, by theorems 8.7 and 8.8 of \cite{LP}, $ \lambda_0(-\Gd,\gd_{\Gw}^{-2},\mathcal{W}_k)>\gm$, and  the two extreme points in $\mathcal{K}^0_{P_\gm}(\mathcal{W}_k)$ are positive $\mathcal{G}$-multiplicative solutions of the equation $P_\gm u=0$ in $\mathcal{W}_k$, and therefore, they have the form
\begin{equation}\label{eq_multi2}
    u_{\pm,k}(r,\gw)=r^{\gg_{\pm,k}}\phi_{\pm,k}(\gw).
\end{equation}
In particular, $\phi_{\pm,k}$ vanish on $\Gs_k$.

Using the spherical coordinates representation \eqref{eq_pm_spher} of $P_\gm$, it follows, that $\phi_{\pm,k}$ are positive in $\Gs$, satisfy $\phi_{\pm,k}(x_1)=1$, and solve the eigenvalue Dirichlet problem
\begin{equation}\label{eq_pm_spher1}
\Big(-\Gd_S -\frac{\gm}{\gd_\Gw^2(\gw)}\Big)\phi_{\pm,k}=\big(\gg_{\pm,k}^2+\gg_{\pm,k}(n-2)\big)\phi_{\pm,k} \;\; \mbox{in } \Gs_k, \;\; \phi_\pm=0 \;\;\mbox{on } \partial \Gs_k.
\end{equation}
On the other hand, since the operator $-\Gd_S -\gm\gd_\Gw^{-2}$ has up to the boundary regular coefficients in $\Gs_k$, it admits a unique (Dirichlet) eigenvalue $\gs_k$ with a positive eigenfunction $\phi_k$ satisfying $\phi_k(x_1)=1$. Moreover, $\gs_k$ is simple. In other words, $\gs_k$ and $\phi_k$ are respectively the {\em principal} eigenvalue and eigenfunction of
$-\Gd_S -\gm\gd_\Gw^{-2}$ in $\Gs_k$.

Hence, $\phi_{\pm,k}$ are equal to $\phi_k$, and  $$\gs_k: =\gs_k(\gm)=\big(\gg_{\pm,k}^2+\gg_{\pm,k}(n-2)\big).$$

By the strict monotonicity with respect to bounded domains of the principal eigenvalue of second-order elliptic operators with up to the boundary regular coefficients, it follows that $\gs_k(\gm)> \gs_{k+1}(\gm)$.

\medskip

On the other hand, since
\begin{equation}\label{eq_multi21}
    u_{\pm,k}(r,\gw))=r^{\gg_{\pm,k}}\phi_k(\gw)>0,
\end{equation}
it follows that $\gg_{-,k}\neq \gg_{+,k}$, and $\gg_{\pm,k}$ are given by
$$\gg_{\pm,k}:= \frac{2-n \pm\sqrt{(n-2)^2+4\gs_k}}{2}\,.$$
In particular,
$$ \gg_{-,k}<\gg_{-,k+1}<\frac{2-n}{2}<\gg_{+,k+1}<\gg_{+,k} \quad \mbox{and } \gs_k > -\frac{(n-2)^2}{4}\,.$$

\medskip

\noindent {\bf Second method:} We only indicate briefly the second approach. We use the results of \cite{M}. By \eqref{eq_pm_spher}, the subcritical elliptic operator $P_\gm$ has a skew-product form in $\mathcal{W}_k=\R_+\times \Gs_k$ and satisfies the conditions of Theorem~1.1 of \cite{M}. Therefore, the equation $P_\gm u=0$ admits two Martin functions of the form \eqref{eq_multi2}.

\medskip

Now, let $k \to \infty$. Then $\gs_k \searrow \gs \geq -(n-2)^2/4$, and up to a subsequence $\phi_k\to \phi_\gm$ locally uniformly in $\Gs$. Clearly, $\gs$ does not depend on the exhaustion of $\Gs$. Recall also that for any nonnegative second-order elliptic operator $L$ in a domain $D$ and any exhaustion $\{D_k\}$ of $D$ we have
$$\gl_0(L,W,D)=\lim_{k\to\infty}\gl_0(L,W,D_k).$$ Hence,  $\gs=\gs(\gm)=\gl_0\big(-\Gd_S -\gm\gd_\Gw^{-2}, \mathbf{1},\Gs\big)$.

Consequently,
$\gg_{\pm,k} \to \gg_{\pm}$, where $\gg_{-}\leq -(n-2)/2 \leq \gg_{+}$.
Hence, we have that
$$\lim_{k\to\infty}u_{\pm,k}(r,\gw))=\lim_{k\to\infty}r^{\gg_{\pm,k}}\phi_k(\gw)=r^{\gg_{\pm}}\phi_\gm(\gw).$$

If $\gg_{-}<-(n-2)/2<\gg_{+}$ (or equivalently, $\gs(\gm) > -(n-2)^2/4$ ), then we obtain two linearly independent $\mathcal{G}$-multiplicative positive solutions of the equation $P_\gm u=0$ in $\Gw$. In particular, $P_\gm$ is subcritical in $\Gw$.
\end{proof}

\begin{remark}\label{Hardy_RM1}{\em
Note that for $n= 2$, $\Gs=\mathbb{S}^{1}$,  and $\gm=\gm_0=0$, we obtain $\gs(0)=0$, $\gg_{\pm}=0$, and $P_0=-\Gd$ is critical in the cone $\R^2\setminus\{0\}$.
 }
\end{remark}
%%%%%%%%%

\begin{remark}\label{Hardy_RM}{\em
Let $\Gs$ be a bounded domain in a smooth Riemannian manifold $M$, and let $\mathrm{d}_\Gs$ be the Riemannian distance function to the boundary $\partial \Gs$. If $\Gs$ is smooth enough, then the Hardy inequality with respect to the weight $(\mathrm{d}_\Gs)^{-2}$  holds in $\Gs$ with a positive constant $C_H$ \cite{R}. A sufficient condition for the validity of a such Hardy inequality is that $\Gs$ is {\em boundary distance regular}, and this condition holds true if $\Gs$ satisfies either the {\em uniform interior cone condition} or the {\em  uniform exterior ball condition} (see the definitions in \cite{R}). For other sufficient conditions for the validity of the Hardy inequality on Riemannian manifolds see for example \cite{MV}.

Hence, if the cone $\Gw\subsetneqq \R^n\setminus\{0\}$ is smooth enough, then $\Gs\subset \mathbb{S}^{n-1}$ is boundary distance regular.
So, for such $\Gs\subset \mathbb{S}^{n-1}$, there exists $C>0$ such that $-\Gd_S -C\mathrm{d}_\Gs^{-2}\geq 0$ in $\Gs$. Note that $\mathrm{d}_\Gs(\gw)\asymp \gd_\Gw(\gw)|_\Gs$ in $\Gs$, therefore,
$-\Gd_S -C_1\gd_\Gw^{-2}\geq 0$ in $\Gs$ for some $C_1>0$.
 }
\end{remark}
In the sequel we shall need the following lemma concerning the criticality of the operator $\mathcal{L}_\gm:=-\Gd_S -\gm\gd_\Gw^{-2}-\gs(\gm)$ in $\Gs$.
\begin{Lem}\label{lem_pos_crit}
Consider the operator $\mathcal{L}_\gm=-\Gd_S -\gm\gd_\Gw^{-2}-\gs(\gm)$ on $\Sigma$. Then
\begin{enumerate}
\item We have
\begin{equation}\label{mu_0}
\mu_0=\lambda_0\Big(-\Delta_S+\frac{(n-2)^2}{4},\delta_\Omega^{-2},\Sigma\Big).
\end{equation}
\item Assume that $\Gs\in C^2$, and $\mu_0<1/4$. Then $\sigma(\mu_0)=-(n-2)^2/4$, and $\mathcal{L}_{\gm_0}$ is critical in $\Sigma$ with ground state $\phi_{\mu_0}\in L^2(\Sigma,\delta_\Omega^{-2}\mathrm{d}S)$.
\item Assume that $\Gs\in C^2$, and $\mu_0=1/4$. Then $\mathcal{L}_{1/4}$ is critical in $\Gs$ with ground state $\phi_{1/4}\in L^2$ $(\Gs,\gd_\Gw^{-2}\log(\gd_\Gw)^{-(1+\ge)}\mathrm{d}S)$, where $\ge$ is any positive number.
\item Assume that $\mu<\mu_0$, then $\mathcal{L}_{\gm}$ is positive critical in $\Gs$. That is, $\mathcal{L}_{\gm}$ admits a ground state $\phi_\gm$ in $\Gs$, and $\gf_\gm\in L^2(\Gs)$.
\end{enumerate}
In particular, in all the above cases, $\phi_\gm$ is (up to a multiplicative constant) the unique positive (super)solution of the equation $\mathcal{L}_{\gm}u=0$ in $\Gs$, and $\gf_\gm\in L^2(\Gs)$.
\end{Lem}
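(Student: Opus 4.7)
Part (1) is a direct rearrangement of Theorem~\ref{thm_etreme}: for any $\mu\in\Real$ one has $P_\mu\geq 0$ in $\Omega$ (i.e.\ $\mu\leq\mu_0$) iff $P_\mu u=0$ admits a positive multiplicative solution of the form \eqref{eq_multi1}, iff $\sigma(\mu)\geq -(n-2)^2/4$. The latter reads $-\Delta_S+(n-2)^2/4-\mu\delta_\Omega^{-2}\geq 0$ on $\Sigma$, so taking the supremum over such $\mu$ identifies $\mu_0$ with $\lambda_0(-\Delta_S+(n-2)^2/4,\delta_\Omega^{-2},\Sigma)$, which is \eqref{mu_0}.

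For part~(4), $\mu<\mu_0$, I would exploit the strict Hardy gap coming from (1):
\[
\int_\Sigma|\nabla\phi|^2\,\mathrm{d}S-\mu\int_\Sigma\delta_\Omega^{-2}|\phi|^2\,\mathrm{d}S\;\geq\;(1-\mu/\mu_0)\int_\Sigma|\nabla\phi|^2\,\mathrm{d}S\qquad\forall\phi\in C_0^\infty(\Sigma),
\]
so the form is equivalent to the $H^1$ norm. Rellich--Kondrachov compactness on the bounded Lipschitz domain $\Sigma\subset\mathbb{S}^{n-1}$ then guarantees that the Rayleigh quotient defining $\sigma(\mu)$ is attained by a positive $\phi_\mu\in L^2(\Sigma)$; this $\phi_\mu$ is a ground state of $\mathcal L_\mu$, giving positive criticality and, via Theorem~\ref{thm_crit}, the uniqueness of the positive (super)solution up to scaling.

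For (2)--(3), coercivity fails at $\mu=\mu_0$, so I would build the null sequence directly. Take $\phi_k$ to be the positive principal Dirichlet eigenfunction of $-\Delta_S-\mu_0\delta_\Omega^{-2}$ on the subdomains $\Sigma_k$ of the exhaustion used in the proof of Theorem~\ref{thm_etreme}, with eigenvalues $\sigma_k\searrow\sigma(\mu_0)$. Concavity (hence continuity) of $\mu\mapsto\sigma(\mu)$ combined with the maximality of $\mu_0$ forces $\sigma(\mu_0)=-(n-2)^2/4$ whenever $\mu_0<1/4$. The integrability assertions are extracted from the boundary asymptotics of $\phi_{\mu_0}$ near $\partial\Sigma$: in geodesic normal coordinates with $t=\mathrm{d}_\Sigma$ (using $\Sigma\in C^2$), the operator reduces to leading order to the one-dimensional Hardy operator $-\partial_t^2-\mu_0/t^2$, whose positive indicial solutions are $t^{\alpha_\pm(\mu_0)}$ with $\alpha_\pm=(1\pm\sqrt{1-4\mu_0})/2$ when $\mu_0<1/4$, and $t^{1/2}$, $t^{1/2}\log(1/t)$ when $\mu_0=1/4$. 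Comparison barriers together with the boundary Harnack principle of Ancona \cite{An87} propagate these profiles to yield $\phi_{\mu_0}\asymp\delta_\Omega^{\alpha_+(\mu_0)}$ when $\mu_0<1/4$ and $\phi_{1/4}\asymp\delta_\Omega^{1/2}$ when $\mu_0=1/4$. Hence $\delta_\Omega^{-2}\phi_{\mu_0}^2\asymp\delta_\Omega^{2\alpha_+-2}$ is integrable since $\alpha_+>1/2$, while $\delta_\Omega^{-2}\phi_{1/4}^2\asymp\delta_\Omega^{-1}$ is integrable only against the logarithmic weight $|\log\delta_\Omega|^{-(1+\varepsilon)}$.

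The main obstacle is the sharp boundary analysis of $\phi_{\mu_0}$, especially in the critical case $\mu_0=1/4$ where the two indicial roots coalesce and a logarithmic correction appears; the precise one-dimensional Hardy profile has to be transferred to the $(n-1)$-dimensional geometry via matched sub/supersolutions in a $C^2$ tubular neighborhood of $\partial\Sigma$. A secondary technical point is that in (2)--(3) the candidate null sequence $\phi_k$ may have $\int_\Sigma\phi_k^2\to\infty$, so the identity $q_{\mathcal L_{\mu_0}}(\phi_k)=(\sigma_k-\sigma(\mu_0))\int_\Sigma\phi_k^2$ does not automatically force $q_{\mathcal L_{\mu_0}}(\phi_k)\to 0$; one may then instead use cutoffs $\chi_k\phi_{\mu_0}$ together with the ground-state transform identity $q_{\mathcal L_{\mu_0}}(\chi_k\phi_{\mu_0})=\int_\Sigma|\nabla\chi_k|^2\phi_{\mu_0}^2\,\mathrm{d}S$ with $\chi_k$ tailored to the boundary decay of $\phi_{\mu_0}$.
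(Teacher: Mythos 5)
Your part~(1) is essentially the paper's argument, and your part~(4) reaches the same conclusion by a closely related route. However, your coercivity estimate for part~(4) is not quite right: by \eqref{mu_0} the Hardy inequality on $\Sigma$ is
$\mu_0\int_\Sigma\delta_\Omega^{-2}|\phi|^2\,\mathrm{d}S\leq\int_\Sigma|\nabla_S\phi|^2\,\mathrm{d}S+\frac{(n-2)^2}{4}\int_\Sigma|\phi|^2\,\mathrm{d}S$,
not $\mu_0\int_\Sigma\delta_\Omega^{-2}|\phi|^2\leq\int_\Sigma|\nabla_S\phi|^2$, so the lower bound you get is $(1-\mu/\mu_0)\int_\Sigma|\nabla_S\phi|^2-\frac{\mu(n-2)^2}{4\mu_0}\int_\Sigma|\phi|^2$, i.e.\ coercivity only modulo a bounded $L^2$ term. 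This is harmless for the Rellich--Kondrachov argument, but the step needs the correction, and one also has to justify weak lower semicontinuity of the form with the negative potential $-\mu\delta_\Omega^{-2}$, which is not automatic. The paper's route is slightly different and avoids these delicacies: since $\mu<\mu_0$, the operator $-\Delta_S+\tfrac{(n-2)^2}{4}-\mu_0\delta_\Omega^{-2}\geq 0$ immediately gives $\lambda_\infty(-\Delta_S-\mu\delta_\Omega^{-2},\mathbf{1},\Sigma)=\infty$, hence $\sigma(\mu)=\lambda_0<\lambda_\infty$, and positive criticality with $L^2$ ground state follows from the abstract fact that a principal eigenvalue strictly below the essential spectrum is a genuine eigenvalue.

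For parts~(2)--(3) you take a genuinely different route. The paper proves criticality and the weighted $L^2$ membership of $\phi_{\mu_0}$ by the spectral-gap argument: using $\Sigma\in C^2$ (and, in part~(3), the Agmon-trick supersolution $\delta_\Omega^{1/2}-\delta_\Omega/2$) it shows $\lambda_\infty(-\Delta_S+\tfrac{(n-2)^2}{4},\delta_\Omega^{-2},\Sigma)=\tfrac14$ (resp.\ $\lambda_\infty(-\Delta_S-\tfrac14\delta_\Omega^{-2},\mathbf{1},\Sigma)=\infty$), so $\lambda_0<\lambda_\infty$, and then reads off both criticality and the $L^2$ (resp.\ weighted $L^2$) membership of the ground state from spectral theory; $\sigma(\mu_0)=-(n-2)^2/4$ then falls out directly from criticality of $-\Delta_S+\tfrac{(n-2)^2}{4}-\mu_0\delta_\Omega^{-2}$. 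You instead derive $\sigma(\mu_0)=-(n-2)^2/4$ from continuity/concavity of $\mu\mapsto\sigma(\mu)$ (valid, but note this quietly uses that $\sigma$ is finite just above $\mu_0$, which is Proposition~\ref{pro_7}(3), so one must check the order of dependencies), and you obtain the integrability by establishing the sharp boundary asymptotics $\phi_{\mu_0}\asymp\delta_\Omega^{\alpha_+}$ (resp.\ $\delta_\Omega^{1/2}$) via indicial analysis, barriers, and the boundary Harnack principle, then build a null sequence with logarithmic cutoffs and the ground-state transform. This is workable, and your secondary concern about $\int_\Sigma\phi_k^2\to\infty$ is real and correctly resolved by switching to cutoffs (this is precisely what the paper does later via Lemma~\ref{null-seq}). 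The trade-off: your route yields pointwise boundary information but front-loads a nontrivial two-sided barrier/BHP analysis, whereas the paper only needs a one-sided supersolution for the $\lambda_\infty$ estimate and gets the $L^2$ membership for free, deferring the pointwise asymptotics (needed only for the logarithmically weighted $L^2$ statement in part~(3)) to the cited MMP argument.

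Two further small remarks. First, in part~(2) your constructive criticality proof still requires knowing $\phi_{\mu_0}\in L^2(\Sigma,\delta_\Omega^{-2}\log^{-2}\delta_\Omega\,\mathrm{d}S)$ before you can run the cutoff argument, and your asymptotics $\phi_{\mu_0}\asymp\delta_\Omega^{\alpha_+}$ with $\alpha_+>1/2$ do give this; but you should note that this is strictly stronger than what is asserted in the lemma, and that it is the minimality-of-growth characterization of the ground state that forces the larger indicial exponent $\alpha_+$ rather than $\alpha_-$. Second, for the uniqueness statement at the end of the lemma, criticality plus Theorem~\ref{thm_crit} gives it, as you say; just make sure you have actually established that $\phi_\mu$ is a (Agmon) ground state, not merely a positive $L^2$ solution, before invoking uniqueness.
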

\begin{proof}
1. To prove \eqref{mu_0} we note that Theorem~\ref{thm_etreme} implies that for $\mu\leq \mu_0$ there exists $\phi_\mu$ positive solution  of
$$\mathcal{L}_{\gm}u= \Big(-\Delta_S-\frac{\mu}{\delta_\Omega^2}-\sigma(\gm)\Big)u=0 \qquad \mbox{in } \Sigma, $$
and since for any $\mu\leq \mu_0$, we have $\sigma(\mu)\geq -(n-2)^2/4$, it follows that $\phi_\mu$ is a positive supersolution of the equation
$$\mathcal{L}_{\gm}u= \Big(-\Delta_S-\frac{\mu}{\delta_\Omega^2}+\frac{(n-2)^2}{4}\Big)u=0 \qquad \mbox{in } \Sigma. $$
 Thus, by the AAP Theorem (Theorem~\ref{thm_AAP}) we get,
$$\mu_0\leq \lambda_0\Big(-\Delta_S+\frac{(n-2)^2}{4}\!,\delta_\Omega^{-2},\Sigma\Big).$$
Let us now take $\mu>\mu_0$, and assume by contradiction that  $-\Delta_S+(n-2)^2/4-\mu\delta_\Omega^{-2}\geq 0$ in $\Gs$. Then by definition, there is a positive solution $\phi_\mu$  of the equation
$$\Big(-\Delta_S-\frac{\mu}{\delta_\Omega^2}+\frac{(n-2)^2}{4}\Big)u=0 \qquad \mbox{in } \Sigma.$$
If one defines
$$\psi(x)=|x|^{(2-n)/2}\phi_\mu\Big(\frac{x}{|x|}\Big),$$
then it is immediate to check that $\psi$ is a positive solution in $\Omega$ of
$$\Big(-\Delta-\frac{\mu}{\delta^2_\Omega}\Big)u=0\qquad \mbox{in } \Gw .$$
This implies that
$$\lambda_0\left(-\Delta,\delta_\Omega^{-2},\Omega\right)\geq \mu>\mu_0,$$
a contradiction. Thus, the operator $-\Delta_S+(n-2)^2/4-\gm\delta_\Omega^{-2}$ cannot be nonnegative in $\Gs$  for $\mu>\mu_0$, and this implies that
$$\mu_0\geq \lambda_0\Big(-\Delta_S+\frac{(n-2)^2}{4},\delta_\Omega^{-2},\Sigma\Big).$$
Hence, \eqref{mu_0} is proved.

2. Since
$$\mathrm{d}_\Sigma(x)\sim \delta_\Omega(x)\qquad  \mbox{as } x\in \Sigma,\,\mathrm{d}_\Sigma(x)\to0,$$
and in light of the proof of \cite[Theorem 5]{MMP}, our assumption that $\Sigma$ is $C^2$ implies that
$$\lambda_\infty\left(-\Delta_S,\delta_\Omega^{-2},\Sigma\right)=\frac{1}{4},$$
which in turn implies that
$$\lambda_\infty\Big(-\Delta_S+\frac{(n-2)^2}{4},\delta_\Omega^{-2},\Sigma\Big)=\frac{1}{4}.$$
On the other hand, by part 1 we have
$$\lambda_0\Big(-\Delta_S+\frac{(n-2)^2}{4},\delta_\Omega^{-2},\Sigma\Big)=\gm_0.$$
%%%%%%%%%%%%%%
Hence, our assumption that $\gm_0<1/4$, implies that there is a spectral gap between the bottom of the $L^2(\Sigma,\gd_\Gw^{-2}\mathrm{d}S)$-spectrum and the bottom of the essential spectrum of the operator $-\Delta_S+(n-2)^2/4$ in $\Sigma$. Consequently, the operator $-\Delta_S+(n-2)^2/4-\gm_0\delta_\Omega^{-2}$
is critical in $\Sigma$, with ground state $\phi_{\mu_0}\in L^2(\Sigma,\delta_\Omega^{-2}\mathrm{d}S)$. Clearly, the criticality of $-\Delta_S+(n-2)^2/4-\mu_0\delta_\Omega^{-2}$  in $\Sigma$ implies that
$$\sigma(\mu_0)=-\frac{(n-2)^2}{4}\,,$$
and the second part of the lemma is proved.

\medskip

\noindent Before proving part 3, we prove the fourth part of the lemma.

\medskip

4. The assumption $\mu<\mu_0$ clearly implies that $\gl_\infty\left(-\Gd_S -\gm\gd_\Gw^{-2}, \mathbf{1},\Gs\right)=\infty$. Hence,
 $$-\frac{(n-2)^2}{4}\leq \gs(\gm)=\gl_0\Big(-\Gd_S -\frac{\gm}{\gd_\Gw^2}, \mathbf{1},\Gs\Big)<\gl_\infty\left(-\Gd_S -\gm\gd_\Gw^{-2}, \mathbf{1},\Gs\right)=\infty.$$
Since $\gl_0$ (respect. $\gl_\infty$) is the bottom of the (respect. essential) $L^2$-spectrum of the operator $-\Gd_S -\gm\gd_\Gw^{-2}$ in $\Gs$, it follows that the operator $\mathcal{L}_\gm$ is critical in $\Gs$, and $\gs(\gm)$ is the principal eigenvalue of the operator
$-\Gd_S -\gm\gd_\Gw^{-2}$ with principal eigenfunction $\gf_\gm\in L^2(\Gs)$. Hence, the operator $\mathcal{L}_\gm$ is positive critical in $\Gs$.

\medskip

3. The proof uses a modification of Agmon's trick (\cite[Theorem 2.7]{Ag85}, see also \cite[Lemma 7]{MMP}). In order to prove that $\gl_\infty(-\Gd_S-1/(4\gd_\Gw^2),\mathbf{1},\Gs)=\infty$, we will show that for suitable positive constants $c,\vge,$ the function $\gd_\Gw^{1/2}-\gd_\Gw/2$ is a positive supersolution of the equation
\begin{equation}\label{eq_trick}
    \left(-\Gd_S-\frac{1}{4\gd_\Gw^2}-\frac{c}{\gd_\Gw^{\vge}}\right)u=0
\end{equation}
 in a sufficiently small neighborhood of the boundary of $\Gs$.

We start by denoting a tubular neighborhood of $\partial\Gs$ having width $\gb>0$, by
 \be\nonumber
  \Gs_\gb:=\{\gw\in \Gs\;|\;{\rm d}_\Gs(\gw)<\gb\}.
 \ee
Recall that since $\Sigma$ is $C^2$, there exists $\gb_*>0$ such that $\mathrm{d}_\Sigma\in C^2$ in $\Gs_{\gb_*}$. In particular, $-\Gd_S{\rm d}_\Gs$ is bounded on $\Gs_{\gb_*}$. Also $|\nabla_S{\rm d}_\Gs|=1$ and $\gd_\Gw=\sin({\rm d}_\Gs)$ (by \eqref{eq_homo3}), both on $\Gs_{\gb_*}$. We may thus compute
 \be\nonumber
  -\Gd_S\gd_\Gw
   =
    \cos({\rm d}_\Gs)\Gd_S{\rm d}_\Gs-\sin({\rm d}_\Gs)
     \;\;\;\mbox{ on }\Gs_{\gb_*},
 \ee
which implies that $\Gd_S\gd_\Gw$ is also bounded on $\Gs_{\gb_*}$. In particular, we have
 \be\label{estim:lapd}
  -\Gd_S\gd_\Gw(\gw)
   \geq
    -h
     \;\;\;\mbox{ for all }\gw\in\Gs_{\gb_*},
 \ee
for some $h>0.$ Now let $c,\vge>0$ and compute on $\Gs_{\gb_*}$
\begin{align*}\nonumber
& \Big(-\Gd_S-\frac{1}{4\gd_\Gw^2}-\frac{c}{\gd_\Gw^\vge}\Big)\Big(\gd_\Gw^{1/2}-\frac{\gd_\Gw}{2}\Big)
 \\[2mm] & =
  -\frac{1}{4\gd_\Gw^{3/2}}(1-|\nabla_S\gd_\Gw|^2)-\frac{1}{2\gd_\Gw^{1/2}}(1-\gd_\Gw^{1/2})\Gd_S\gd_\Gw+\frac{1}{8\gd_\Gw}-c\gd_\Gw^{1/2-\vge}+\frac{c}{2}\gd_\Gw^{1-\vge}
   \\[2mm] & \geq
    -\frac{\gb^2_*}{4\gd_\Gw^{1/2}}-\frac{h}{2\gd_\Gw^{1/2}}(1-\gd_\Gw^{1/2})+\frac{1}{8\gd_\Gw}-c\gd_\Gw^{1/2-\vge}+\frac{c}{2}\gd_\Gw^{1-\vge},
\end{align*}
where we have used the fact that $1-|\nabla_S\gd_\Gw|^2=\sin^2({\rm d}_\Gs)\leq\gb^2_*$ on $\Gs_{\gb_*}$ and also (\ref{estim:lapd}). Clearly, by fixing $\vge$ in $(0,3/2)$ we obtain that this estimate blows up as $\gw\in\Gs_{\gb_*}$ approaches the boundary of $\Gs$. Thus, for a smaller $\gb_*>0$ if necessary, we proved that $\gd_\Gw^{1/2}-\gd_\Gw/2$ is a positive supersolution of \eqref{eq_trick} in $\Gs_{\gb_*}$. The APP theorem (Theorem \ref{thm_AAP}) implies
 \be\label{log-h}
  \int_{\Gs_{\gb_*}}\Big(|\nabla u|^2-\frac{1}{4\gd_\Gw^2}\Big)\vgf^2{\rm d}S
   \geq
    c\int_{\Gs_{\gb_*}}\frac{\vgf^2}{\gd^\vge_\Gw}{\rm d}S\;\;\;\;\;\;\forall \vgf\in C_0^\infty(\Gs_{\gb_*}),
 \ee
which together with $\lim_{{\rm d}_\Gs(\gw)\to 0}\gd_\Gw^{-\vge}(\gw)=\infty$ imply that
$$\lambda_\infty\Big(\Gd_S-\frac{1}{4\gd_\Gw^2},\mathbf{1},\Gs\Big)=\infty.$$
As in the proof of part 2, one concludes that $\mathcal{L}=\-\Delta_S-1/(4\delta_\Omega^2)-\sigma(\mu)$ is critical, with ground state $\phi_{1/4}\in L^2(\Sigma)$.

It remains to show that in fact, $\phi_{1/4}\in L^2(\Sigma,\delta_\Omega^{-2}\log^{-(1+\ge)}(\delta_\Omega)\mathrm{d}S)$. In fact, the arguments used in the proof of \cite[Lemma~9]{MMP} show that, as $\gw\in \Gs$ and $\delta_\Omega(\gw)\to0$,
$$\phi_{1/4}(\gw)\asymp \delta_\Omega^{1/2}(\gw).$$
This implies that $\phi_{1/4}\in L^2(\Sigma,\delta_\Omega^{-2}\log^{-(1+\ge)}(\delta_\Omega)\mathrm{d}S)$ for any $\ge>0$.
\end{proof}
\begin{proposition}\label{pro_7} Let $\gs(\gm)=\lambda_0(-\Delta_S-\gm\gd_\Gw^{-2},\mathbf{1},\Sigma)$. Then
\begin{enumerate}
\item $\gs(\gm)\geq  -(n-2)^2/4$ for any $\gm\leq \gm_0$, and if $\Gs\in C^2$ and $\mu_0<1/4$, then  $\gs(\gm_0) = -(n-2)^2/4$.

\item $\gs(\gm)=-\infty$ for any $\gm > 1/4$.

\item If $\Gs\in C^2$, then $\gs(\mu)>-\infty$ for all $\gm\leq 1/4$.
\end{enumerate}
\end{proposition}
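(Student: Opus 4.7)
The plan is to handle the three claims in turn. Part (1) requires no new ingredient: the inequality $\sigma(\mu)\geq-(n-2)^2/4$ for $\mu\leq\mu_0$ is the content of \eqref{eqgsgm} in Theorem~\ref{thm_etreme}, forced there by the reality of the exponents $\gamma_\pm$ in the positive multiplicative solutions $u_\pm=r^{\gamma_\pm}\phi_\mu(\omega)$; and the equality $\sigma(\mu_0)=-(n-2)^2/4$ under the hypotheses $\Sigma\in C^2$ and $\mu_0<1/4$ is precisely Lemma~\ref{lem_pos_crit}(2).

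For Part (2), I would fix $\mu>1/4$ and prove the stronger statement $\lambda_\infty(-\Delta_S-\mu\delta_\Omega^{-2},\mathbf{1},\Sigma)=-\infty$, which forces $\sigma(\mu)=-\infty$. Equivalently, for every $c\in\Real$ and every compact $K\Subset\Sigma$, I would exhibit $\varphi\in C_0^\infty(\Sigma\setminus K)$ with
$$\int_\Sigma\bigl(|\nabla_S\varphi|^2+c\varphi^2-\mu\delta_\Omega^{-2}\varphi^2\bigr)\dS<0.$$
Since $\partial\Sigma$ is Lipschitz, Rademacher's theorem supplies a point $p_0\in\partial\Sigma$ at which $\partial\Sigma$ admits a tangent hyperplane. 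In bi-Lipschitz Fermi-type coordinates $(t,y)$ near $p_0$ (with $\Sigma$ locally corresponding to $\{t>0\}$), one has $\delta_\Omega(t,y)=t(1+o(1))$ as $(t,y)\to 0$ non-tangentially, and $-\Delta_S=-\partial_t^2+(\text{lower-order terms in }t,y)$. I pick $u\in C_0^\infty((0,1))$ with $\int_0^1(|u'|^2-\mu u^2/t^2)\dt=-\alpha<0$, available because $\mu$ exceeds the sharp Hardy constant $1/4$ on the half-line, and set $\varphi_\lambda(t,y):=\lambda^{-(n-1)/2}u(t/\lambda)\chi(y/\lambda)$ for a fixed bump $\chi$. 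The standard scaling identities, combined with the geometric asymptotics, give
$$\int_\Sigma\bigl(|\nabla_S\varphi_\lambda|^2+c\varphi_\lambda^2-\mu\delta_\Omega^{-2}\varphi_\lambda^2\bigr)\dS=-\alpha'\lambda^{-2}\bigl(1+o(1)\bigr)+O(1),$$
which is negative for $\lambda$ small; and $\supp\varphi_\lambda$ shrinks to $p_0$, so eventually lies in $\Sigma\setminus K$. This strategy underlies the bound $\lambda_\infty(-\Delta_S,\delta_\Omega^{-2},\Sigma)\leq 1/4$ proved in \cite[Theorem~5]{MMP}.

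For Part (3), assume $\Sigma\in C^2$. The modified Agmon-trick argument used in the proof of Lemma~\ref{lem_pos_crit}(3) exhibits $\delta_\Omega^{1/2}-\delta_\Omega/2$ as a positive supersolution of $\bigl(-\Delta_S-(1/4)\delta_\Omega^{-2}-c\delta_\Omega^{-\varepsilon}\bigr)u=0$ in a boundary strip $\Sigma_{\beta_*}$, and that construction depends on $\Sigma\in C^2$ but not on $\mu_0$. The AAP theorem (Theorem~\ref{thm_AAP}) then yields a Hardy inequality with a remainder,
$$\int_{\Sigma_{\beta_*}}\Bigl(|\nabla_S\varphi|^2-\frac{1}{4\delta_\Omega^2}\varphi^2\Bigr)\dS\geq c\int_{\Sigma_{\beta_*}}\delta_\Omega^{-\varepsilon}\varphi^2\dS\qquad\forall\varphi\in C_0^\infty(\Sigma_{\beta_*}).$$
On the compact complement $\Sigma\setminus\Sigma_{\beta_*/2}$ the potential $\delta_\Omega^{-2}$ is bounded, so a standard partition-of-unity localization produces a constant $C>0$ with $-\Delta_S-(1/4)\delta_\Omega^{-2}+C\geq 0$ on $\Sigma$ in the form sense. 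Since for $\mu\leq 1/4$ one has $-\Delta_S-\mu\delta_\Omega^{-2}\geq -\Delta_S-(1/4)\delta_\Omega^{-2}\geq -C$, this gives $\sigma(\mu)\geq -C>-\infty$.

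The main technical hurdle lies in Part (2): the lower-order corrections arising from the Riemannian metric on $S^{n-1}$ and from the asymptotic $\delta_\Omega(t,y)\sim t$ must be shown not to overwhelm the dominant $-\alpha'\lambda^{-2}$ term. This is delicate because $\Sigma$ is only assumed to be Lipschitz, but it can be handled by a careful choice of the bump $\chi$ and by fully exploiting the differentiability of $\partial\Sigma$ at the selected point $p_0$, as in the Lipschitz case treated in \cite{MMP}.
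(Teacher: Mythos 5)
Your Part (1) matches the paper exactly: the inequality $\sigma(\mu)\geq -(n-2)^2/4$ is read off Theorem~\ref{thm_etreme}, and the equality $\sigma(\mu_0)=-(n-2)^2/4$ under $\Sigma\in C^2$, $\mu_0<1/4$ is Lemma~\ref{lem_pos_crit}(2). For Part (2) you take a genuinely different route: the paper argues by contradiction, observing that if $\sigma(\mu)$ were finite for some $\mu>1/4$, the resulting positive solution $\phi$ would, near $\partial\Sigma$ (where $\delta_\Omega/\mathrm{d}_\Sigma\to 1$ and $\delta_\Omega^{-2}\to\infty$), be a positive supersolution of $(-\Delta_S-(\mu-\varepsilon)\mathrm{d}_\Sigma^{-2})u=0$, forcing $\lambda_\infty(-\Delta_S,\mathrm{d}_\Sigma^{-2},\Sigma)\geq\mu-\varepsilon>1/4$ and contradicting the Lipschitz-domain bound $\lambda_\infty\leq 1/4$ of \cite[Theorem~5]{MMP}; you instead build the concentrating test-function sequence from scratch, essentially reproving the MMP estimate inline rather than invoking it as a black box. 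Both are valid, but the paper's argument is shorter and pushes the Lipschitz-boundary bookkeeping (the ``main technical hurdle'' you flag) onto the cited reference, where it has already been carried out. For Part (3) you also deviate, and here your version is cleaner: the paper treats $\mu<1/4$ and $\mu=1/4$ separately, using $\lambda_\infty(-\Delta_S,\delta_\Omega^{-2},\Sigma)=1/4$ to get a near-boundary positive solution in the first case and the Agmon-trick supersolution $\delta_\Omega^{1/2}-\delta_\Omega/2$ in the second, then in each case patching to a global positive supersolution of $(-\Delta_S-\mu\delta_\Omega^{-2}+\sigma)u=0$. Your observation that $-\Delta_S-\mu\delta_\Omega^{-2}\geq -\Delta_S-(1/4)\delta_\Omega^{-2}$ in the form sense for all $\mu\leq 1/4$ collapses both cases to the worst one $\mu=1/4$; the subsequent partition-of-unity (IMS) localization to combine the near-boundary Hardy inequality \eqref{log-h} with the interior bound is standard and correct, and yields a uniform finite lower bound $\sigma(\mu)\geq -C$. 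In summary: Part (1) identical, Part (2) correct but more laborious than the paper's contradiction-via-MMP argument, Part (3) correct and arguably a simplification.
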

\begin{proof}
1. Recall that by Lemma~\ref{lem_gl0} we have that $0<\gm_0\leq 1/4$, and by Theorem~\ref{thm_etreme} $\gs(\gm)\geq -(n-2)^2/4$ for all  $\gm \leq \gm_0$. Moreover, by Lemma~\ref{lem_pos_crit}, if $\Gs\in C^2$
 and $\mu_0<1/4$, then $\gs(\gm_0) = -(n-2)^2/4$.  In particular,
for such $\gm$ we have that  $\gs(\gm)$  is finite.

\medskip

2. Let $\gm > 1/4$, and suppose that $\sigma(\mu)$ is finite. Then one can find a positive function $\gf$ satisfying
$$(-\Delta_S-\mu\delta_\Gw^{-2} -\sigma(\mu))\gf=0 \qquad \mbox{in } \Sigma.$$ Take $\varepsilon>0$ such that $\gm-\vge>1/4$. Clearly,
$$\lim_{\gw\to \partial \Sigma}\delta_\Gw^{-2}(\gw)=\infty, \quad \mbox{and }\;  \lim_{\gw\to \partial \Gs}\frac{\delta_\Omega(\gw)}{\mathrm{d}_\Sigma(x)}=1,$$
where $\mathrm{d}_\Gs$ is the Riemannian distance to the boundary of $\Gs$.
Hence, $\gf$ is a positive supersolution of the equation
$$(-\Delta_S-(\mu-\varepsilon)\mathrm{d}_\Gs^{-2})u=0$$
in a neighborhood of infinity in $\Sigma$.

On the other hand, as in \cite{MMP}, if $\Gs$ is a Lipschitz  domain, then
$\lambda_\infty(-\Delta_S,\mathrm{d}_\Sigma^{-2},\Sigma) \leq 1/4$. Consequently, for  such $\vge$, one gets a contradiction to $\lambda_\infty(-\Delta_S,\mathrm{d}_\Sigma^{-2},\Sigma)\leq 1/4$.

\medskip

3.  Suppose first that $\gm<  1/4$. Recall that since $\Gs\in C^2$ we have
$$\lambda_\infty(-\Delta_S,\gd_\Gw^{-2},\Sigma) =\lambda_\infty(-\Delta_S,\mathrm{d}_\Sigma^{-2},\Sigma)=1/4.$$

Take $\vge>0$ such that $\gm+\vge<  1/4$. Let $\gf$ be a positive solution of the equation
$$\big(-\Delta_S-(\mu+\vge)\delta_\Gw^{-2}\big)u=0$$ in a neighborhood of infinity in $\Sigma$, and let $\tilde{\gf}$ be a nice positive function in $\Gs$ such that $\tilde{\gf}=\gf$ in a neighborhood of $\partial \Gs$. Then for $\gs$ large enough, $\tilde{\gf}$ is a positive supersolution of the equation
$(-\Delta_S-\mu\delta_\Gw^{-2}+\gs)u=0$ in $\Sigma$. Hence $\gs(\mu)>-\infty$ for all $\gm<1/4$.

Suppose now that $\gm=  1/4$. By \eqref{eq_trick}, $\psi:=\gd_\Gw^{1/2}-\gd_\Gw/2$ is a positive supersolution of
$$\Big(-\Delta_S-\frac{1}{4\delta_\Omega^2}-\frac{c}{\gd_\Gw^{\ge}}\Big)u=0$$
outside a compact set $K_\varepsilon\Subset \Sigma$. Let $\tilde{\psi}$ be a nice positive function in $\Gs$ such that $\tilde{\psi}=\psi$ in a neighborhood of $\partial \Gs$. Hence, for $\gs$ large enough, $\tilde{\psi}$ is a positive supersolution of the equation
$(-\Delta_S-1/4\delta_\Gw^{2}+\gs)u=0$ in $\Sigma$. Hence $\gs(1/4)>-\infty$.
\end{proof}
\begin{remark}\label{rem_bilip}
{\em In Lemma~\ref{lem_pos_crit} and Proposition~\ref {pro_7}, it is assumed that $\Gs\in C^2$. The extension of the proposition to the class of Lipschitz domains remains open. We recall that by the recent result of G.~Barbatis and  P.~D.~Lamberti \cite[Proposition~1]{BL}, the Hardy constant of a bounded domain is Lipschitz continuous as a function of bi-Lipschitz maps that approximate the domain. It seems that finding for a given Lipschitz domain a uniform bi-Lipschitz smooth approximation is a nontrivial problem: we note that in  \cite[Theorem~1]{DP}, the authors prove the existence of approximation of Lipschitz homeomorphisms by smooth ones in the $W^{1,p}$ topology for $p<\infty$. However, to apply the results in \cite{BL}, we should need $W^{1,\infty}$-approximations.
 }
 \end{remark}
We conclude the present section with the following general result that provides us with a sufficient condition for the criticality of a Schr\"odinger operator on a precompact domain. For a general sufficient condition see \cite{P07}.
\begin{Lem}\label{null-seq}
Let $P=-\Delta+V$ be a nonnegative Schr\"{o}dinger operator on a compact Riemannian manifold with boundary $M$, endowed with its Riemannian measure $\dx$. Denote by $\delta=\gd_M$ the distance function to the boundary of $M$. Assume that $M\in C^2$, $V$ is smooth in the interior of $M$, and that the equation $Pu=0$ in $M$ admits a positive solution $\phi\in L^2(M,\delta^{-2}\log^{-2}(\delta)\dx)$. Then, $P$ is critical in $M$ with ground state $\phi$, and furthermore, there exists a null-sequence $\{\phi_k\}_{k=0}^\infty$ for $P$, which converges locally uniformly and in $L^2$ to $\varphi$.

\end{Lem}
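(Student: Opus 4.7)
The plan is to apply the ground-state substitution and then build an explicit null-sequence out of logarithmic cut-offs calibrated to the weight $\gd^{-2}\log^{-2}(\gd)$ appearing in the hypothesis. The criticality of $P$ with ground state $\phi$ will then follow immediately from Theorem~\ref{thm_crit}.

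First I would record the ground-state identity. Since $V$ is smooth in $\mathrm{int}(M)$, elliptic regularity gives $\phi\in C^\infty(\mathrm{int}(M))$. For any Lipschitz function $g$ with compact support in $\mathrm{int}(M)$, a standard computation (expand $|\nabla(\phi g)|^2$, integrate the cross-term by parts, use $P\phi=0$) yields
\begin{equation*}
q(\phi g)\,=\,\int_M\phi^2|\nabla g|^2\dx.
\end{equation*}

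The crux is picking the right $g$. Let $\vge_k\downarrow 0$ and define
\begin{equation*}
\chi_k(t):=\begin{cases}0, & 0\leq t\leq\vge_k^2,\\[1mm] \dfrac{\log(t/\vge_k^2)}{\log(1/\vge_k)}, & \vge_k^2\leq t\leq\vge_k,\\[1mm] 1, & t\geq\vge_k.\end{cases}
\end{equation*}
Then $\chi_k$ rises from $0$ to $1$ on the shrinking interval $[\vge_k^2,\vge_k]$, converges pointwise to $1$ on $(0,\infty)$ uniformly on compact subsets, and satisfies $\chi_k'(t)=(t\log(1/\vge_k))^{-1}$ there. Since $\log^2(1/t)\leq 4\log^2(1/\vge_k)$ on $[\vge_k^2,\vge_k]$, this gives the key pointwise bound
\begin{equation*}
|\chi_k'(t)|^2\,\leq\,\frac{4}{t^2\log^2 t}.
\end{equation*}
Setting $\psi_k:=\chi_k\circ\gd$ (Lipschitz, compactly supported in $\mathrm{int}(M)$) and using $|\nabla\gd|\leq 1$, the ground-state identity yields
\begin{equation*}
q(\phi\psi_k)\,=\,\int_M\phi^2|\nabla\psi_k|^2\dx\,\leq\,4\int_{A_k}\frac{\phi^2}{\gd^2\log^2\gd}\dx,
\end{equation*}
where $A_k:=\{\vge_k^2\leq\gd\leq\vge_k\}$. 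Since $\mathbf{1}_{A_k}\to 0$ a.e.\ and the integrand is dominated by the $L^1$-function $\phi^2\gd^{-2}\log^{-2}(\gd)$ (exactly the hypothesis), dominated convergence forces $q(\phi\psi_k)\to 0$. Applied to $0\leq\phi\psi_k\leq\phi$ with $\psi_k\to 1$ pointwise, the same theorem yields $\phi\psi_k\to\phi$ in $L^2_{\loc}$ and locally uniformly on $\mathrm{int}(M)$.

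To finish, fix an open $B\Subset\mathrm{int}(M)$ with $\phi>0$ on $B$ and set $\phi_k:=(\phi\psi_k)/\|\phi\psi_k\|_{L^2(B)}$ (well-defined for large $k$ since the denominator tends to $\|\phi\|_{L^2(B)}>0$); a routine mollification via a diagonal argument produces a $C_0^\infty$ null-sequence with $q(\phi_k)\to 0$, $\|\phi_k\|_{L^2(B)}=1$, converging to a positive multiple of $\phi$ locally uniformly and in $L^2_{\loc}$. Hence $\phi$ is an Agmon ground state of $q$ in $M$, and Theorem~\ref{thm_crit} gives criticality of $P$ with ground state $\phi$. The delicate point is the calibration of $\chi_k$: it must climb to $1$ slowly enough that $|\chi_k'|^2\lesssim t^{-2}\log^{-2}t$ on its transition region, yet fast enough that $\chi_k\to 1$ pointwise in $\mathrm{int}(M)$; the logarithmic profile achieves precisely this balance, which explains why the hypothesis has been formulated with the weight $\gd^{-2}\log^{-2}(\gd)$.
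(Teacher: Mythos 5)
Your proposal is correct and follows essentially the same approach as the paper: the ground-state transform reduces $q(\phi\psi_k)$ to $\int\phi^2|\nabla\psi_k|^2$, the logarithmic cutoff on the transition region $\{\vge_k^2\leq\gd\leq\vge_k\}$ is (up to the reparametrization $\vge_k=1/k$) exactly the paper's $v_k$, and the pointwise bound $|\chi_k'(t)|^2\leq 4t^{-2}\log^{-2}t$ combined with dominated convergence against the hypothesized $L^2(\gd^{-2}\log^{-2}(\gd))$-integrability is the paper's key estimate. The only cosmetic addition is your explicit normalization and mollification remark at the end, which the paper leaves implicit.
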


\begin{proof}
If $q$ denotes the quadratic form of $P$, then using the ground state transform (see for example \cite{DFP}) we have for every $\vgf\in C_0^\infty(M)$,
$$q(\phi \vgf)=\int_M \phi^2|\nabla \vgf|^2 \dx.$$
This formula extends easily to every Lipschitz continuous function $\vgf$ which is compactly supported in $M$. For $k\geq2$, let us define $v_k:\R_+\to [0,1]$ by
$$v_k(t)=\left\{
           \begin{array}{ll}
             0 & \;0\leq t\leq 1/k^2, \\[2mm]
             1+\dfrac{\log(kt)}{\log k} & \;1/k^2<t<1/k\,, \\[3mm]
             1 & \;t\geq1/k.
           \end{array}
         \right.
$$
Note that $0\leq v_k(\delta)\leq 1$, and $\{v_k(\delta)\}_{k\geq 2}$ converges pointwise to the constant function $\mathbf{1}$ in $M$. Define
$$\phi_k:=v_k(\delta)\phi,$$
then, using that $\phi\in L^2_\loc$, one sees that $\{\phi_k\}_{k=0}^\infty$ converges locally uniformly and hence in $L^2_{\loc}$ to $\phi$. We now prove that $\{\phi_k\}_{k=2}^\infty$ is a null-sequence for $P$, which implies that $P$ is critical with ground state $\phi$. If $K\Subset M$ is a fixed precompact open set, then clearly, there is a positive constant $C$ such that, for $k$ big enough,
$$\int_K\phi_k^2\dx\asymp 1.$$
Thus, in order to prove that $\{\phi_k\}_{k=2}^\infty$ is a null-sequence for $P$, it is enough to prove that
\begin{equation}\label{nul-seq}
\lim_{k\to\infty}\int_M \phi^2|\nabla v_k(\delta)|^2\mathrm{d}x=0.
\end{equation}
Since $|\nabla \delta(x)|\leq1$ a.e. in $M,$ it is enough to show that
$$\lim_{k\to\infty}\int_M \phi^2|v_k'(\delta)|^2\dx=0.$$
We compute
\begin{equation*}
\int_M \phi^2|v_k'(\delta)|^2\dx
=\int_{\{1/k^2< \delta<1/k\}} \Big(\frac{\phi}{\delta\log(k)}\Big)^2\,\mathrm{d}x \leq 4\int_{\{\delta<1/k\}}\Big(\frac{\phi}{\delta\log(\delta)}\Big)^2\,\mathrm{d}x.
\end{equation*}
By our hypothesis, the function $\phi^2\delta^{-2}\log^{-2}(\delta)$ is integrable on $\{\delta<1/2\}$, hence,
$$\lim_{k\to\infty}\int_{\{\delta<1/k\}}\Big(\frac{\phi}{\delta\log(\delta)}\Big)^2\dx=0,$$
which shows \eqref{nul-seq}. Thus, $\{\phi_k\}_{k\geq 2}$ is a null-sequence for $P$.
\end{proof}
%%%%%%%%%%%%%%%%%%%%%%%%%%%%%%%%
%New
%%%%%%%%%%%%%%%%%%%%%%%%%%%
\section{The structure of $\mathcal{K}^0_{P_\gm}(\Gw)$}\label{nsec_multi}
As above, let $\Gw$ be a Lipschitz cone. By Lemma~\ref{lem_gl0} the generalized principal eigenvalue $\gm_0:=\gl_0(-\Gd,\gd_\Gw^{-2},\Gw)$ satisfies
$0<\gm_0\leq 1/4$.

For $\gm\leq \gm_0$, denote by  $\mathcal{K}^0_{P_\gm}(\Gw)$ the convex set of all positive solutions $u$ of the equation $P_\gm u=0$ in $\Gw$ satisfying the normalization condition $u(x_1)=1$, and the Dirichlet boundary condition $u=0$ on $\partial \Gw\!\setminus \!\{0\}$ in the sense of the Martin boundary. That is, any $u\in \mathcal{K}^0_{P_\gm}(\Gw)$ has minimal growth on $\partial \Gw\!\setminus \!\{0\}$. For the definition of minimal growth on a portion $\Gg$ of $\partial \Gw$, see \cite{P94}.

If  $\gm_0<1/4$ and $\Gs$ is $C^2$, then in Theorem~\ref{thm_opt_hardy2} (to be proved in the sequel) we show that the operator $P_{\gm_0}$ is critical in $\Gw$, and therefore the  equation  $P_{\gm_0}u=0$ in $\Gw$ admits (up to a multiplicative constant) a unique positive supersolution. Moreover, by Theorem~\ref{thm_etreme}, the unique positive solution is a multiplicative solution of the form \eqref{eq_multi1}.

The following theorem characterizes the structure of $u\in \mathcal{K}^0_{P_\gm}(\Gw)$ for any $\gm<\gm_0$.
\begin{theorem}\label{nthm_etreme}
Let $\gm<\gm_0 \leq 1/4$. Then $\mathcal{K}^0_{P_\gm}(\Gw)$ is the convex hull of two linearly independent positive solutions of the equation $P_\gm u=0$ in $\Gw$ of the form
\begin{equation}\label{neq_multi1}
    u_\pm(x)=|x|^{\gg_{\pm}}\phi_\gm\Big(\frac{x}{|x|}\Big),
    \end{equation}
where
$\phi_\gm$ is the unique positive solution of the equation
\begin{equation}\label{neq_pm_spher4}
\Big(-\Gd_S -\frac{\gm}{\gd_\Gw^2(\gw)}\Big)\phi_\gm=\gs(\gm) \phi_\gm \qquad\mbox{in } \Gs,
\end{equation}
\begin{equation}\label{neqgsgm}
    -\frac{(n-2)^2}{4}< \gs(\gm):=\gl_0\Big(-\Gd_S -\frac{\gm}{\gd_\Gw^2}, \mathbf{1},\Gs\Big), \mbox{ and}
\end{equation}
\begin{equation}\label{neq_gb}
\gg_{\pm}:= \frac{2-n \pm\sqrt{(2-n)^2+4\gs(\gm)}}{2}\,.
\end{equation}
\end{theorem}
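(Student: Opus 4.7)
The plan is to combine three ingredients: the boundary Harnack principle of Ancona, the representation theory for group-invariant operators from \cite{LP,P94}, and the spherical criticality analysis in Lemma~\ref{lem_pos_crit}. The two required extreme points are already provided by Theorem~\ref{thm_etreme}; what remains is to show that any element of $\mathcal{K}^0_{P_\gm}(\Gw)$ decomposes as a convex combination of $u_+$ and $u_-$. First I would verify that $\gs(\gm) > -(n-2)^2/4$ strictly, which is equivalent to the linear independence of $u_+$ and $u_-$. This follows from the strict inequality $\gm < \gm_0$ via the identification of $\gm_0$ in Lemma~\ref{lem_pos_crit}(1): if $\gs(\gm) = -(n-2)^2/4$ for some $\gm < \gm_0$, the resulting positive solution $\phi_\gm$ of $(-\Gd_S + (n-2)^2/4 - \gm\gd_\Gw^{-2})u = 0$ in $\Gs$ would force $\gm_0 \leq \gm$ by the AAP theorem, a contradiction.

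Next I would establish that the lateral boundary of $\Gw$ behaves ``like a single Martin boundary point per end''. Since $\gm < \gm_0$, the operator $P_\gm$ is strictly subcritical in $\Gw$ with a definite spectral gap, so Ancona's boundary Harnack principle \cite{An87} applies uniformly on compact subsets of $\partial\Gw \setminus \{0\}$: any two positive solutions of $P_\gm u = 0$ vanishing on a relatively open portion of $\partial\Gw \setminus \{0\}$ are comparable there. Combined with the homogeneity of both $\Gw$ and $\gd_\Gw$, this uniformity propagates along orbits of the dilation group.

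The dilation group $\mathcal{G} = \R_+$ acts cocompactly on $\overline{\Gw} \setminus \{0\}$ by homotheties, and $P_\gm$ is $\mathcal{G}$-invariant because of \eqref{eq_homo} and the scaling of the Laplacian. Invoking \cite[Theorems~8.7--8.8]{LP} (or alternatively the skew-product method of \cite{M} applied to \eqref{eq_pm_spher}), every extreme point of $\mathcal{K}^0_{P_\gm}(\Gw)$ must be $\mathcal{G}$-multiplicative, hence of the form $r^\gg\phi(\gw)$ with $\phi > 0$ in $\Gs$ vanishing on $\partial\Gs$ in the minimal growth sense. Substituting into \eqref{eq_pm_spher} reduces the equation to $-\Gd_S\phi - \gm\gd_\Gw^{-2}\phi = (\gg^2 + (n-2)\gg)\phi$ in $\Gs$. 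By Lemma~\ref{lem_pos_crit}(4), $\mathcal{L}_\gm$ is critical in $\Gs$ with ground state $\phi_\gm$, and criticality gives uniqueness (up to a scalar) of the positive supersolution. Consequently $\phi$ must be proportional to $\phi_\gm$ and $\gg^2 + (n-2)\gg = \gs(\gm)$, so that $\gg \in \{\gg_+, \gg_-\}$. Together with Step~1, this identifies the two extreme points precisely as $u_\pm$ from \eqref{neq_multi1}. Since $\mathcal{K}^0_{P_\gm}(\Gw)$ is convex and compact in the compact-open topology with exactly two extreme points, the Krein--Milman theorem (or the Choquet/Martin representation) gives the convex hull description.

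The main technical obstacle is justifying Ancona's boundary Harnack principle for $P_\gm$ given that the potential $\gm\gd_\Gw^{-2}$ blows up at $\partial\Gw$, since Ancona's original framework assumes locally bounded potentials. To handle this, I would perform a ground state transform near $\partial\Gw \setminus \{0\}$ using a positive local solution $v$ of $P_\gm v = 0$ (whose existence is guaranteed by subcriticality), reducing $P_\gm$ to a divergence-form operator $-v^{-2}\div(v^2 \nabla \cdot)$ with no zero-order potential, to which Ancona's theorem applies directly. Extra care is needed at the vertex $0$, where both the cone structure and the singular potential interact; here the cocompactness of the dilation action makes it possible to transfer local estimates from any annular slab $A_R$ to all of $\Gw \setminus \{0\}$, completing the proof.
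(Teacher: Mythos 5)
Your overall architecture matches the paper's: Ancona's boundary Harnack principle, the multiplicative-solution theory of Lin--Pinchover applied to the dilation action, substitution into the spherical form \eqref{eq_pm_spher}, and the criticality of $\mathcal{L}_\gm$ on $\Gs$ from Lemma~\ref{lem_pos_crit}(4) together with a minimal-growth identification to pin down $\phi = \phi_\gm$ and $\gs = \gs(\gm)$. Your aside about reducing $P_\gm$ to divergence form via a ground-state transform to bring it within Ancona's framework is a sensible supplement; the paper simply cites \cite{An12a}, which already treats Schr\"odinger potentials.

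However, your opening step is incorrect as written. You claim that if $\gs(\gm) = -(n-2)^2/4$ for some $\gm < \gm_0$, then the positive solution $\phi_\gm$ of $\big(-\Gd_S + (n-2)^2/4 - \gm\gd_\Gw^{-2}\big)u = 0$ in $\Gs$ would ``force $\gm_0 \leq \gm$ by the AAP theorem.'' But the AAP theorem (Theorem~\ref{thm_AAP}) goes the other way: the existence of a positive solution shows that the operator $-\Gd_S + (n-2)^2/4 - \gm\gd_\Gw^{-2}$ is \emph{nonnegative} in $\Gs$, i.e.\ $\gm \leq \gl_0(-\Gd_S + (n-2)^2/4,\gd_\Gw^{-2},\Gs) = \gm_0$. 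This is consistent with $\gm < \gm_0$, so you obtain no contradiction at all; the step proves nothing.

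The paper obtains the strict inequality differently, and your own later steps already contain the correct argument. Once the Fuchsian-singularity theory of \cite{P94} shows that the subcritical $P_\gm$ has \emph{exactly two} extreme points in $\mathcal{K}^0_{P_\gm}(\Gw)$, and \cite[Section~8]{LP} shows these are multiplicative, each has the form $r^{\gg}\phi(\gw)$ with the spherical part forced to be the unique ground state $\phi_\gm$ of $\mathcal{L}_\gm$. Since there are \emph{two distinct} such solutions sharing the same spherical factor, the exponents $\gg_+$, $\gg_-$ must differ --- which is precisely $\gs(\gm) > -(n-2)^2/4$. (The paper makes this explicit via the uniform boundary Harnack inequality \eqref{UHI}: if $\gg_+ = \gg_-$, then $u_+ \asymp u_-$, so the two extreme points would coincide.) Thus you should delete the flawed contradiction argument in step~1 and instead let the strict inequality follow from the two-extreme-point count, exactly as it does in step~4; note also that the existence of exactly two extreme points is not a consequence of Krein--Milman (which only gives at least one), but needs the representation theorem for positive solutions with two Fuchsian singular points from \cite{P94}.
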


\begin{proof}
The assumption $\gm<\gm_0$ implies that the operator $P_\gm$ is subcritical in $\Gw$.
In particular, $\mu<1/4$, and therefore, there exists $\vge>0$ such that the operator $P_{\mu+\vge}$ is subcritical in a small neighborhood of a given portion of $\partial \Gw\setminus\{0\}$.  Since the operator $P_\gm$ and the cone $\Gw$ are invariant under scaling, it follows from the local Harnack inequality, and from the boundary Harnack principle of A.~Ancona for the operator $P_\mu$ in $\Gw$ \cite{An87} (see also \cite{An12a}) that the following uniform boundary Harnack principle holds true in the annulus $A_R\subset\Gw$. There exists $C>0$ (independent of $R$) such that
\begin{equation}\label{UHI}
C^{-1}\frac{v(x)}{v(y)}\leq C^{-1}\frac{u(x)}{u(y)}\leq C\frac{v(x)}{v(y)} \qquad \forall x,y \in  A_R,
\end{equation}
for any $u,v\in \mathcal{K}^0_{P_\gm}(\Gw)$ and $R>0$.

Hence, we can use directly the arguments in \cite{P94} to obtain that in the subcritical case the convex set $\mathcal{K}^0_{P_\gm}(\Gw)$ has exactly two extreme points. Moreover, we can use directly the method of \cite[Section~8]{LP}, to obtain that $u$ is an extreme point of $\mathcal{K}^0_{P_\gm}(\Gw)$ if and only if it is a positive multiplicative solution in $\mathcal{K}^0_{P_\gm}(\Gw)$. Thus, the two extreme points of $\mathcal{K}^0_{P_\gm}(\Gw)$ are of the form
$$u_\pm(x)=|x|^{\gg_{\pm}}\phi_\pm\Big(\frac{x}{|x|}\Big),$$ where $\phi_\pm>0$ in $\Gs$, and solves the equation
\begin{equation}\label{neq_pm_spher41}
\Big(-\Gd_S -\frac{\gm}{\gd_\Gw^2(\gw)}\Big)\phi_\pm=\gs_\pm \phi_\pm \qquad\mbox{in } \Gs,
\end{equation}
\begin{equation}\label{neqgsgm1}
    -\frac{(n-2)^2}{4}\leq \gs_\pm \leq\gs(\gm):=\gl_0\Big(-\Gd_S -\frac{\gm}{\gd_\Gw^2}, \mathbf{1},\Gs\Big), \mbox{ and}
\end{equation}
\begin{equation}\label{neq_gb1}
\gg_{\pm}:= \frac{2-n \pm\sqrt{(n-2)^2+4\gs_\pm}}{2}\,.
\end{equation}
If $\gg_+=\gg_-$, then \eqref{UHI} implies that $u_+\asymp u_-$. Since $u_\pm(x)$ are two extreme points, and $\mathcal{K}^0_{P_\gm}(\Gw)$ has exactly two extreme points, it follows that $\gg_+\neq \gg_-$. Therefore,  $\gs_\pm =\gs$, where
$-(n-2)^2/4< \gs \leq\gs(\gm)$ and $\gg_\pm$ satisfy
\begin{equation}\label{neq_gb17}
\gg_{\pm}:= \frac{2-n \pm\sqrt{(n-2)^2+4\gs}}{2}\,.
\end{equation}
Moreover, since $\phi_\pm$ solve the same equation in $\Gs$, and $\mathcal{K}^0_{P_\gm}(\Gw)$ has exactly two extreme points, it follows that $\phi_\pm=\phi$.

Note that by Lemma~\ref{lem_pos_crit}, $\phi$ is a positive solution of minimal growth near $\partial \Gs$ if and only if $\gs =\gs(\gm)$. On the other hand, $u_\pm$ have minimal growth near $\partial \Gw\setminus \{0\}$. Therefore, $\phi=\phi_\gm$ and $\gs =\gs(\gm)$, where $\phi_\gm$ is a ground state satisfying \eqref{neq_pm_spher4}, and $\gs(\gm)$ and $\gg_\pm$ satisfy \eqref{neqgsgm} and \eqref{neq_gb}, respectively.
\end{proof}
%%%%%%%%%%%%%%%%%%%%%%%%%%%%%
%%%%%%%%%%%%%%%%
\section{The main result}\label{sec_main}
The present section is devoted to our main result concerning the existence of an optimal Hardy weight for the operator $P_\gm$ which is defined in a cone $\Gw$. In Theorem~\ref{thm_opt_hardy1} we prove the case where $\gm<\gm_0$ and $\Gw$ is a Lipschitz cone, while in Theorem~\ref{thm_opt_hardy2} we prove the case $\gm=\gm_0$ under the assumption that $\Gs\in C^2$.

Let us recall that by Theorem~\ref{thm_etreme}, if $\mu\leq \mu_0$, then
$$\sigma(\mu):=\lambda_0\Big(-\Delta-\frac{\mu}{\delta_\Omega^2},\mathbf{1},\Sigma\Big)\geq -\frac{(n-2)^2}{4},$$
and there exists a positive solution $\phi_\mu$ of the equation
$$\Big(-\Delta_S-\frac{\mu}{\delta_\Omega^2}-\sigma(\mu)\Big)u=0\qquad \mbox{in } \Sigma.$$
 Furthermore, by Lemma~\ref{lem_pos_crit}, the operator
$$\mathcal{L}:=\mathcal{L}_\gm=-\Delta_S-\frac{\mu}{\delta_\Omega^2}-\sigma(\mu)$$
is critical (for any $\gm<\gm_0$, and also for $\gm=\gm_0$ if in addition $\Gs\in C^2$), and $\phi_\mu$ is the ground state of $\mathcal{L}$.

We first prove.
\begin{Pro}\label{Hardy-w}
Let $\Omega$ be a Lipschitz cone. Let $\mu\leq \mu_0$, and let
\begin{equation}\label{eqgsgm1}
     \gl(\gm):=\frac{(2-n)^2+4\gs(\gm)}{4}\,.
\end{equation}
Then $\lambda(\mu)\geq0$, and the following Hardy inequality holds true in $\Omega$:
\begin{equation}\label{opt_hardy}
 \int_{\Omega}|\nabla \varphi|^2\dx -\gm\int_{\Omega} \frac{|\varphi|^2}{\gd_\Gw^2}\dx \geq \gl(\gm)\int_{\Omega}  \frac{|\varphi|^2}{|x|^2}\dx \qquad \forall \varphi\in C_0^\infty(\Omega).
\end{equation}
\end{Pro}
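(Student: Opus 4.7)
The strategy is the \emph{supersolution construction} based on the two positive multiplicative solutions produced by Theorem~\ref{thm_etreme}, combined with a straightforward application of the Agmon--Allegretto--Piepenbrink theorem (Theorem~\ref{thm_AAP}). Throughout, recall that by Theorem~\ref{thm_etreme} there is a positive $\phi_\mu$ on $\Sigma$ solving \eqref{eq_pm_spher4} with $\sigma(\mu)\ge -(n-2)^2/4$, so the exponents $\gamma_\pm$ in \eqref{eq_gb} are real (possibly coincident).

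\textbf{Step 1: Nonnegativity of $\lambda(\mu)$.} Since $\mu\le \mu_0$, Theorem~\ref{thm_etreme} gives the bound $\sigma(\mu)\ge -(n-2)^2/4$, equivalently $(2-n)^2+4\sigma(\mu)\ge 0$. Dividing by $4$ yields $\lambda(\mu)\ge 0$ directly from \eqref{eqgsgm1}.

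\textbf{Step 2: The candidate supersolution.} Set
\[
v(x):=|x|^{(2-n)/2}\phi_\mu\!\left(\tfrac{x}{|x|}\right)=\bigl(u_+(x)u_-(x)\bigr)^{1/2},
\]
with $u_\pm$ as in \eqref{eq_multi1}. Then $v>0$ in $\Omega$ and $v\in C^\infty_\loc(\Omega)$ since the coefficients of $P_\mu$ are locally smooth in $\Omega$ (the singular factor $\delta_\Omega^{-2}$ is bounded on every compact subset of $\Omega$). Using the skew-product representation \eqref{eq_pm_spher}, one computes, with $r=|x|$ and $\omega=x/|x|$,
\begin{align*}
-\partial_r^2 v-\frac{n-1}{r}\partial_r v
&=\left[\frac{(n-2)^2}{4}\right]r^{-(n+2)/2}\phi_\mu(\omega),\\[1mm]
\frac{1}{r^2}\Bigl(-\Delta_S\phi_\mu-\tfrac{\mu}{\delta_\Omega^2(\omega)}\phi_\mu\Bigr)\,r^{(2-n)/2}
&=\sigma(\mu)\,r^{-(n+2)/2}\phi_\mu(\omega),
\end{align*}
where the second identity uses \eqref{eq_pm_spher4}. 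Adding the two contributions,
\[
P_\mu v=\frac{(2-n)^2+4\sigma(\mu)}{4}\cdot\frac{v}{|x|^2}=\lambda(\mu)\,\frac{v}{|x|^2}\qquad\text{in }\Omega.
\]
Thus $v$ is a positive solution (in the classical sense inside $\Omega$) of the equation $(P_\mu-\lambda(\mu)|x|^{-2})u=0$.

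\textbf{Step 3: From positive solution to inequality.} The operator $P_\mu-\lambda(\mu)|x|^{-2}$ is symmetric with locally regular coefficients in $\Omega$ (note that $0\notin\Omega$, so $|x|^{-2}$ is locally bounded there). Since it admits the positive solution $v$ constructed above, Theorem~\ref{thm_AAP} implies that its quadratic form is nonnegative on $C_0^\infty(\Omega)$, which is precisely \eqref{opt_hardy}.

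The only step with any content is the computation in Step~2, which is purely algebraic once the correct supersolution $v=(u_+u_-)^{1/2}$ is identified; this choice is dictated by the fact that $\gamma_+\gamma_-$ is the unique value producing an $|x|^{-2}$ weight (rather than a genuine $r^\alpha$ dependence), and it is the natural substitute for the supersolution construction of Theorem~\ref{thm_bs} in the present setting where the Martin boundary includes $\partial\Omega\setminus\{0\}$ due to the singular potential. No obstacle is anticipated; optimality of the weight $\lambda(\mu)|x|^{-2}$ is a separate matter to be addressed later in Section~\ref{sec_main}.
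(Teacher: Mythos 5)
Your proof is correct and follows essentially the same route as the paper's: define $\psi(x)=|x|^{(2-n)/2}\phi_\mu(x/|x|)$, verify via the skew-product form \eqref{eq_pm_spher} and the eigenvalue equation \eqref{eq_pm_spher4} that $(P_\mu-\lambda(\mu)|x|^{-2})\psi=0$, and invoke the AAP theorem. Your additional identification $\psi=(u_+u_-)^{1/2}$ and the explicit separation of the radial and angular contributions merely make explicit the computation the paper leaves to the reader (and matches the paper's own Remark~\ref{supersol}).
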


\begin{proof}
The fact that $\lambda(\mu)\geq0$ follows from $\sigma(\mu)\geq -(n-2)^2/4$, which has been proved in Theorem~\ref{thm_etreme}. Define
$$\psi(x)=|x|^{(2-n)/2}\phi_\mu\Big(\frac{x}{|x|}\Big).$$
Then, taking into account that
$$\Big(-\Delta_S-\sigma(\mu)-\frac{\mu}{\delta_\Omega^2}\Big)\phi_\mu=0\qquad \mbox{in } \Sigma,$$
and writing $P_\mu$ in spherical coordinates \eqref{eq_pm_spher}, it follows that $\psi$ is a positive solution of the equation
$$\left(P_\mu-\lambda(\mu)|x|^{-2}\right)u=0 \qquad \mbox{in } \Gw.$$
Thus, the operator $P_\mu-\lambda(\gm)|x|^{-2}$ is nonnegative in $\Omega$, and so \eqref{opt_hardy} holds by the AAP Theorem (Theorem~\ref{thm_AAP}).
\end{proof}

\begin{Rem}\label{supersol}

{\em

In the case $\mu<\mu_0$, the Hardy inequality \eqref{opt_hardy} can be obtained using the {\em supersolution construction} of \cite{DFP}: indeed, by Theorem~\ref{nthm_etreme}, the equation $P_\mu u=0$ has two linearly independent, positive solutions in $\Omega$, of the form
$$u_\pm(x)=|x|^{\gamma_\pm}\phi_{\mu}\Big(\frac{x}{|x|}\Big).$$
By the {\em supersolution construction} (\cite[Lemma~5.1]{DFP}), the positive function
$$u_{1/2}:=(u_+ u_-)^{1/2}=|x|^{(2-n)/2)}\phi_\mu\Big(\frac{x}{|x|}\Big)$$
is a solution of
$$\Bigg(P_\mu-\frac{\left|\nabla \left(u_+/u_-\right)\right|^2}{4 \left(u_+/u_-\right)^2}\Bigg)u=0 \qquad \mbox{in } \Gw.$$
It is easy to check that
$$\frac{\left|\nabla \left(u_+/u_-\right)\right|^2}{4 \left(u_+/u_-\right)^2}=\frac{\lambda(\mu)}{|x|^2},$$
and by the AAP~theorem, the Hardy inequality \eqref{opt_hardy} holds.
}
\end{Rem}

\begin{remark}\label{re_Fubini}{\em In the case $\mu\leq\mu_0$, the Hardy inequality \eqref{opt_hardy} can also be obtained using spherical coordinates, Fubini's theorem, and the well-known one-dimensional Hardy-inequality
 \be\label{Hardy_weighted_1dim}
  \int_0^\infty(v')^2t^{n-1}\dt
   \geq
    \Big(\frac{n-2}{2}\Big)^2\int_0^\infty v^2t^{n-3}\dt ,
 \ee
valid for all functions $v\in H^1(\mathbb{R}_+)$ that vanish at $\infty,$ one easily obtains \eqref{opt_hardy} for any $\gm\in\R$.

Indeed, suppose that $\varphi\in C_c^\infty(\Omega).$ Then we have that $\varphi_{\Sigma_r},$ the restriction of $\varphi$ on $\Sigma_r$, is in $C_c^\infty(\Sigma)$. Consequently, by the definition of $\sigma(\mu)$,  it follows that for all $\varphi\in C_c^\infty(\Omega)$ and each $r>0$ we have
 \be\nonumber
  \int_{\Sigma_r}|\nabla_\omega \varphi|^2 \dS_r
   -\mu\int_{\Sigma_r}\frac{\varphi^2}{\delta_{\Omega}^2(\omega)}\dS_r
    \geq
     \sigma(\mu)\int_{\Sigma_r}\varphi^2 \dS_r.
 \ee
Multiplying this by $r^{-2}$ and integrating in $\R_+$ with respect to $r$, we arrive at
 \be\nonumber
  \int_0^\infty\!\!\int_{\Sigma_r}\frac{|\nabla_\omega\varphi|^2}{r^2} \dS_r\dr
   -\mu\int_0^\infty\!\!\int_{\Sigma_r}\frac{\varphi^2}{r^2\delta_{\Omega}^2(\omega)}\dS_r\dr
    \geq
     \sigma(\mu)\int_0^\infty\!\!\int_{\Sigma_r}\frac{\varphi^2}{r^2} \dS_r\dr.
 \ee
Recall that in spherical coordinates we have $$|\nabla\varphi|^2=\frac{|\nabla_\omega\varphi|^2}{r^2}+\varphi_r^2,$$ and taking into account (\ref{eq_homo}), the last inequality is written as follows
 \be\nonumber
  \int_\Omega|\nabla \varphi|^2\dx
   -\mu\int_\Omega\frac{\varphi^2}{\delta_{\Omega}^2(x)}\dx
    \geq
     \sigma(\mu)\int_\Omega\frac{\varphi^2}{|x|^2}\dx
      +\int_{\Sigma}\int_0^\infty\varphi_r^2r^{n-1}\dr\dS,
 \ee
where we have used Fubini's theorem on the last term. Applying (\ref{Hardy_weighted_1dim}) in the inner integral of the last term and using Fubini's theorem again, we obtain (\ref{opt_hardy}) for any $\gm\in\R$.}
 \end{remark}

We now investigate the optimality of the Hardy inequality \eqref{opt_hardy} when $\mu<\mu_0$.
\begin{theorem}\label{thm_opt_hardy1}
Let $\Gw$ be a Lipschitz cone, and let $\mu<\mu_0$. Then $\gl(\gm)>0$. Furthermore the weight $W:=\gl(\gm)|x|^{-2}$ is an optimal Hardy weight for the operator $P_\gm$ in $\Gw$ in the following sense:
\begin{enumerate}
 \item The operator $P_\gm-\gl(\gm)|x|^{-2}$ is critical in $\Gw$, i.e., the Hardy inequality
 $$ \int_{\Omega}|\nabla \varphi|^2\dx -\gm\int_{\Omega} \frac{|\varphi|^2}{\gd_\Gw^2}\dx \geq \int_{\Omega} V(x)|\varphi|^2\dx \qquad \forall \varphi\in C_0^\infty(\Omega)$$
holds true for $V\geq W$ if and only if $V=W$. In particular,
$$\lambda_0\Big(P_\gm,\frac{1}{|x|^2},\Omega \Big)=\gl(\gm).$$
 \item The constant $\gl(\gm)$ is also the best constant for \eqref{opt_hardy} with test functions supported either in $\Gw_R$ or in $\Gw\setminus \overline{\Gw_R}$, where $\Gw_R$ is  a fixed truncated cone of the form \eqref{TC}. In particular,
$$\lambda_\infty\Big(P_\gm,\frac{1}{|x|^2},\Omega \Big)=\gl(\gm).$$
 \item The operator $P_\gm-\gl(\gm)|x|^{-2}$ is {\em null-critical} at $0$ and at infinity in the following sense: For any $R>0$  the (Agmon) ground state
of the operator $P_\gm-\gl(\gm)|x|^{-2}$  given by
$$v(x):=|x|^{(2-n)/2}\phi_\gm\Big(\frac{x}{|x|}\Big)$$
satisfies
$$\int_{\Gw_R} \left(|\nabla v|^2-\gm\frac{|v|^2}{\gd_\Gw^2}\right)\dx= \int_{\Omega\setminus \overline{\Gw_R}} \left(|\nabla v|^2-\gm\frac{|v|^2}{\gd_\Gw^2}\right)\dx=\infty.$$
 In particular, the variational problem
 \begin{align*}
 \inf_{\varphi\in \mathcal{D}^{1,\,2}_{P_\gm}(\Gw)}&\frac{\int_{\Omega}|\nabla \varphi|^2\dx-\gm\int_{\Omega} \frac{|\varphi|^2}{\gd_\Gw^2}\dx}{\displaystyle{\int_{\Omega}\frac{|\varphi|^2}{|x|^2}\dx}}\quad
 \end{align*}
 does not admit a minimizer.
\item The spectrum and the essential spectrum of the Friedrichs extension of the operator $W^{-1}P_\gm=\lambda(\mu)^{-1}|x|^2P_\mu$ on $L^2(\Gw, W\dx)$ are both equal to  $[1,\infty)$.
\end{enumerate}
\end{theorem}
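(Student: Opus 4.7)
The plan is to combine the representation of $\mathcal{K}^0_{P_\mu}(\Omega)$ from Theorem~\ref{nthm_etreme} with the supersolution construction of \cite{DFP}. Since $\mu<\mu_0$ we have $\sigma(\mu)>-(n-2)^2/4$, so the exponents $\gamma_\pm$ are distinct and
\[
u_\pm(x)=|x|^{\gamma_\pm}\phi_\mu(x/|x|)
\]
are two linearly independent positive solutions of $P_\mu u=0$ in $\Omega$. Their geometric mean $v=(u_+u_-)^{1/2}=|x|^{(2-n)/2}\phi_\mu(x/|x|)$ satisfies $(P_\mu-\lambda(\mu)|x|^{-2})v=0$ in $\Omega$ (Remark~\ref{supersol}), and the ratio $u_+/u_-=|x|^{\gamma_+-\gamma_-}$ tends to $0$ at the end $x=0$ and to $\infty$ at $\bar\infty$. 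This asymptotic decoupling at the two ends is the analog of condition~\eqref{u1u0bs} and drives the entire argument.

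For part~(1), I will establish criticality of $P_\mu-\lambda(\mu)|x|^{-2}$ via Theorem~\ref{thm_crit}(iv) by showing that $v$ is a positive solution of minimal growth in a neighborhood of infinity of $\Omega$. Minimal growth near $\partial\Omega\setminus\{0\}$ transports from the uniqueness of $\phi_\mu$ as a positive solution of $\mathcal L_\mu\phi=0$ on $\Sigma$ (Lemma~\ref{lem_pos_crit}(4)) through the uniform boundary Harnack principle~\eqref{UHI}. Minimal growth at the two radial ends $0$ and $\bar\infty$ is the heart of the supersolution construction of \cite[Lemma~5.1]{DFP}: given any positive supersolution $w$ of $(P_\mu-\lambda(\mu)|x|^{-2})u=0$ on $\Omega\setminus\overline{K}$ dominating $v$ on $\partial K$ for a compactum $K$ straddling both ends, comparison with appropriate multiples of $u_\pm$ on large and small annuli, together with the limits of $u_+/u_-$ above, forces $w\geq v$.

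For part~(3), I compute directly in spherical coordinates: using $v=r^{(2-n)/2}\phi_\mu(\omega)$ and $dx=r^{n-1}dr\,dS$,
\[
\int_{\Omega'}\!\left(|\nabla v|^2-\mu\frac{v^2}{\delta_\Omega^2}\right)\dx
=\int_{I}\frac{dr}{r}\int_\Sigma\!\left(\tfrac{(n-2)^2}{4}\phi_\mu^2+|\nabla_\omega\phi_\mu|^2-\mu\frac{\phi_\mu^2}{\delta_\Omega^2}\right)dS,
\]
for any cylindrical set $\Omega'=\{r\in I,\,\omega\in\Sigma\}$. The angular factor equals $\lambda(\mu)\|\phi_\mu\|^2_{L^2(\Sigma)}>0$ upon testing the eigenfunction equation~\eqref{neq_pm_spher4} against $\phi_\mu$, which is legitimate since $\phi_\mu$ lies in the form domain of $\mathcal L_\mu$ by Lemma~\ref{lem_pos_crit}(4). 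Taking $I=(0,R)$ and $I=(R,\infty)$ yields the two divergences claimed in~(3); nonexistence of a minimizer for the Rayleigh quotient then follows by combining criticality with this null-criticality, as in Theorem~\ref{temp_def}(c).

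Parts~(2) and~(4) are corollaries of the first three. For~(2), null-criticality at both ends combined with the AAP theorem and an exhaustion argument along the lines of \cite[Theorem~2.2(b)]{DFP} yields $\lambda_\infty=\lambda(\mu)=\lambda_0$ on $\Omega_R$ and on $\Omega\setminus\overline{\Omega_R}$. For~(4), the operator $\widetilde P:=\lambda(\mu)^{-1}|x|^2P_\mu$ is invariant under the dilations $U_tu(x)=t^{(n-2)/2}u(tx)$, which act unitarily on $L^2(\Omega,W\dx)$; the radial Mellin decomposition then fibers $\widetilde P$ as multiplication by $1+\xi^2/\lambda(\mu)$ for $\xi\in\mathbb R$, which together with $\inf\sigma(\widetilde P)=\inf\sigma_{\mathrm{ess}}(\widetilde P)=1$ from (1) and (2) gives $\sigma(\widetilde P)=\sigma_{\mathrm{ess}}(\widetilde P)=[1,\infty)$. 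The main obstacle will be executing the minimal-growth argument in part~(1) rigorously at both radial ends in the presence of the boundary-singular potential $\mu/\delta_\Omega^2$: \cite{DFP} treats interior singular ends, and extending the supersolution construction to the present setting where both ends $\{0\}$ and $\{\bar\infty\}$ lie on $\partial\Omega$ relies crucially on the uniform boundary Harnack principle of Ancona exploited in Theorem~\ref{nthm_etreme}.
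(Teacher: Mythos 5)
Your overall plan is sound and reaches the same landmarks as the paper, but you take a genuinely different route for the central criticality claim in part (1), and that route is left with a gap you yourself flag.

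For part (1), you propose to invoke Theorem~\ref{thm_crit}(iv) by showing that $v=(u_+u_-)^{1/2}$ is a positive solution of minimal growth in a neighborhood of infinity in $\Omega$, using boundary Harnack near $\partial\Omega\setminus\{0\}$ and a supersolution-comparison argument at the two radial ends $\{0\}$, $\{\bar\infty\}$. The paper deliberately avoids this path: it constructs a null-sequence explicitly in product form, $\varphi_l(r,\omega)=u_l(r)\phi_{k_l}(\omega)$, where $\{u_l\}$ is a null-sequence for the radial Bessel operator $\mathcal R$ and $\{\phi_k\}$ is a null-sequence for the angular operator $\mathcal L_\mu$ (obtained from the Dirichlet eigenfunctions on $\Sigma_k$ and an energy bound \eqref{eq_72} that exploits $\mu<\mu_0$ to get a $W^{1,2}_0(\Sigma)$ bound). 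The skew-product identity for the quadratic form $Q(u\psi)=q_{\mathcal R}(u)\|\psi\|^2_2+\bigl(\int u^2r^{n-3}\dr\bigr)q_{\mathcal L}(\psi)$ then lets one pick a diagonal subsequence with $Q(\varphi_l)\to0$ while $\int_B\varphi_l^2\,\dx\asymp1$. This is a direct invocation of Theorem~\ref{thm_crit}(ii) and sidesteps the singular-potential difficulties at $\partial\Omega$. Your minimal-growth alternative is not carried out: Lemma~5.1 of \cite{DFP} only produces the supersolution, not the minimal growth; the general DFP machinery for boundary ends (Theorem~\ref{thm_bs}) is explicitly noted in the Introduction to be inapplicable here because $\delta_\Omega^{-2}$ blows up on $\partial\Omega$; and you would also need to show minimal growth for $P_\mu-W$, not just $P_\mu$, near $\partial\Omega\setminus\{0\}$. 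So the heart of part (1) remains open in your proposal.

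Your part (3) computation is correct and in fact a little more transparent than the paper's single sentence. Separating $\int_{\Omega'}\bigl(|\nabla v|^2-\mu v^2/\delta_\Omega^2\bigr)\dx$ into $\int_I\frac{\dr}{r}$ times the angular energy, and reducing the angular factor to $\lambda(\mu)\|\phi_\mu\|_2^2$ by integrating the eigenvalue equation against $\phi_\mu$, does give both divergent integrals. You are right that the integration by parts is justified because for $\mu<\mu_0$ the ground state $\phi_\mu$ lies in the form domain of $\mathcal L_\mu$, which Lemma~\ref{lem_pos_crit}(4) underpins. Note, though, that part (3) of the theorem presupposes that $v$ is the Agmon ground state, hence it relies on the criticality in part (1) being established.

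For parts (2) and (4) your outline is broadly aligned with the paper but noticeably sketchier. The paper proves (4) by conjugating $W^{-1}P_\mu$ on the subspace $\mathcal E=L^2\bigl(\R_+, r^{n-1}\lambda(\mu)r^{-2}\dr\bigr)\otimes\operatorname{span}\{\phi_\mu\}$ via a rescaling $\mathcal U$ composed with the Mellin transform, obtaining multiplication by $1+(n-2)^2\xi^2/(4\lambda(\mu))$ (your statement $1+\xi^2/\lambda(\mu)$ misses the $(n-2)^2/4$ normalization), and then uses the Hardy inequality for the reverse inclusion. For (2), the paper introduces the half-line restrictions $\mathcal P_k$ and $\mathcal P_{1/k}$, uses the inversion symmetry $x\mapsto x/|x|^2$ to equate their spectra, and then the stability of the essential spectrum under compactly supported perturbations together with the emptiness of the discrete spectrum to conclude both equal $[1,\infty)$, from which the best local constants near $0$ and $\infty$ follow. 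Your ``exhaustion argument along the lines of DFP'' gestures at the same conclusion but does not supply these ingredients, in particular the inversion symmetry and the spectral-stability step.

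In sum: the null-criticality computation in (3) is a valid and somewhat sharper argument than the paper's; the criticality argument in (1) follows a different path that is left incomplete and appears harder to close than the paper's explicit product null-sequence; parts (2) and (4) are plausible sketches but rely on machinery (inversion symmetry, Mellin normalization, essential-spectrum stability) that you would need to spell out.
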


\begin{Rem}
{\em

As is pointed out  in Remark~\ref{supersol}, if $\mu<\mu_0$, then the Hardy inequality \eqref{opt_hardy} can be obtained by applying the {\em supersolution construction} from \cite{DFP}. Thus, Theorem~\ref{thm_opt_hardy1} extends Theorem~\ref{temp_def} to the particular singular case, where $\Gw$ is a cone and $P_\mu$ is the Hardy operator (which is singular on $\partial \Omega$).

}
\end{Rem}

\begin{proof}[Proof of Theorem~\ref{thm_opt_hardy1}]
In light of our assumption that $\gm<\gm_0 \leq 1/4$, it follows the operator $P_\gm$ is subcritical in $\Gw$. Moreover, by Theorem~\ref{nthm_etreme}, $\sigma(\mu)>-(n-2)^2/4$, so $\lambda(\mu)>0$. For such a $\gm$, consider the operator $\mathcal{L}=\mathcal{L}_\gm$ on $\Sigma\subset \mathbb{S}^{n-1}$ defined by
$$\mathcal{L}=-\Delta_{S}-\frac{\mu}{\delta^2_{\Gw}}-\gs(\mu),$$ with the corresponding nonnegative quadratic form
$$q_{\mathcal{L}}(\psi)=\int_\Gs  \Bigg(|\nabla_\gw \psi|^2 -\gm \frac{|\psi|^2}{\gd_\Gw^2}-\gs(\mu)|\psi|^2\Bigg) \mathrm{d}S\qquad \mbox{where } \psi\in C_0^\infty(\Gs).$$
Notice that by Lemma~\ref{lem_pos_crit}, $\mathcal{L}$ is critical in $\Gs$ with the ground state $\phi_\gm\in L^2(\Sigma)$. We normalize $\phi_\mu$ so that $\int_\Sigma\phi_\mu^2\,\mathrm{d}S=1$.

On the other hand, it is well known that the operator
$$\mathcal{R}:=-\frac{\partial^2}{\partial r^2}-\frac{n-1}{r}\frac{\partial}{\partial r}-\frac{(n-2)^2}{4r^2}$$
is critical on $\R_+$, and $r^{(2-n)/2}$ is its ground state . Indeed, the corresponding quadratic form $q_{\mathcal{R}}$ of $\mathcal{R}$ (endowed with the measure $r^{n-1}\dr$) is given by
$$q_{\mathcal{R}}(u)=\int_0^\infty \left[(u')^2 -\frac{(n-2)^2}{4} \frac{u^2}{r^2}\right]r^{n-1}\dr\qquad u\in C_0^\infty(\R_+),$$
and gives rise to the critical operator $\mathcal{R}$ on $\R_+$.

Recall that in spherical coordinates $P_\mu-W$ has the following skew-product form:
$$P_\mu-W=\mathcal{R}\otimes \mathcal{I}_{\Gs}-\frac{\mathcal{I}_{\R_+}}{r^2}\otimes \mathcal{L}=\frac{\partial^2}{\partial r^2}-\frac{n-1}{r}\frac{\partial}{\partial r}-\frac{(n-2)^2}{4r^2}+\frac{1}{r^2}\mathcal{L},$$
where $\mathcal{I}_A$ is the identity operator on $A$. Consequently, it is natural to construct a null-sequence for $P_\mu-W$ of the product form $$\{\varphi_{k}(r,\gw)\}_{k= 1}^\infty=\{u_k(r)\phi_{k}(\gw)\}_{k= 1}^\infty$$ that  converges locally uniformly to $r^{(2-n)/2}\phi_\gm(\gw)$, and by Theorem~\ref{thm_crit}, this implies that the operator $P_\mu-W$ is critical and $r^{(2-n)/2}\phi_\gm(\gw)$ is its ground state.

Let $\{u_k(r)\}_{k= 1}^\infty$ be a null-sequence for the critical operator $\mathcal{R}$ on $\R_+$, converging locally uniformly to $r^{(2-n)/2}$. So,
$$q_{\mathcal{R}}(u_k)\to 0,\qquad \int_1^2 (u_k)^2\,r^{n-1}\dr=1.$$
On the other hand, let
$\{\phi_k(\gw)\}_{k= 1}^\infty$ be (up to the normalization constants) the sequence of ground states defined by \eqref{eq_pm_spher1} on $\Gs_k$, so that
 $$\int_{\Gs} \phi_k^2\,\mathrm{d}S=1, \quad  \mbox{and } q_{\mathcal{L}}(\phi_k)= \big(\gs_k(\gm)-\gs(\gm)\big)\int_{\Gs} \phi_k^2\,\mathrm{d}S \to 0. $$
Note that the normalization of $\phi_k$ is different from the one used in the proof of Theorem~\ref{thm_etreme}. Recall that the operator $\mathcal{L}_{\gm_0}=-\Delta_{S}-\mu_0\delta^{-2}_{\Gw}-\gs(\mu_0)$ is nonnegative on $\Sigma$. Therefore,
 \begin{equation}\label{eq_71}
    \frac{\gm\gs(\gm_0)}{\gm_0}\int_{\Gs} \phi_k^2\,\mathrm{d}S +\gm\int_{\Gs}\frac{|\phi_k|^2}{{\gd_\Gw^2}}\dS\leq \frac{\gm}{\gm_0} \int_{\Gs}|\nabla_\gw \phi_k|^2\dS.
 \end{equation}
On the other hand,
 \begin{equation}\label{opt_hardy5}
 \int_{\Gs}|\nabla_\gw \phi_k|^2\dS  =\gs_k \int_{\Gs}\phi_k^2\dS+\gm\int_{\Gs}\frac{\phi_k^2}{{\gd_\Gw^2}}\dS
\end{equation}
By \eqref{eq_71} and \eqref{opt_hardy5} we get
 \begin{equation}\label{eq_72}
    \Big(1-\frac{\gm}{\gm_0}\Big)\int_{\Gs}|\nabla_\gw \phi_k|^2\dS \leq \Big(\gs_k  -\frac{\gm\gs(\gm_0)}{\gm_0}\Big)\int_{\Gs} \phi_k^2\,\mathrm{d}S \leq \gs_1  -\frac{\gm\gs(\gm_0)}{\gm_0}
 \end{equation}
Since $\mu<\mu_0$, one gets that $\{\phi_k\}$ is bounded in $W^{1,2}_0(\Gs)$, and therefore (up to a subsequence), $\{\phi_k\}$ converges, in $L^2$ and locally uniformly to $\phi$, a positive solution of $\mathcal{L}u=0$ in $\Sigma$ with $\int_\Sigma \phi^2\,\mathrm{d}S=1$. Since $\mathcal{L}$ is critical in $\Gs$, $\phi=\phi_\mu$. Hence, by the Harnack inequality,
$$\int_{\Gs_1} \phi_k^2\,\mathrm{d}S\asymp 1,$$
and therefore $\{\phi_k\}$ is a null-sequence.

\medskip

We claim that there exists a subsequence $\{k_l\}\subset \N$, such that $\{u_l(r)\phi_{k_l}(\gw)\}$ is a null-sequence for the operator $P_\mu-W$ in $\Gw$ that converges locally uniformly to $r^{(2-n)/2}\phi_\gm(\gw)$.

Indeed, fix the pre-compact open set $B: =\{(r,\gw)\mid r\in (1,2),\,\omega\in \Sigma_1\}$. Note that for the quadratic form $Q$ of $P_\mu-W$ in $\Gw$, if $u=u(r)$ is compactly supported in $\R_+$ and $\psi=\psi(\gw)$ is compactly supported in $\Gs$, we have
$$Q(u(r)\psi(\gw))=q_{\mathcal{R}}(u)||\psi||_2^2+\left(\int_0^\infty u^2(r)r^{n-3}\dr\right)q_{\mathcal{L}}(\psi).$$
For each $k$, notice that by definition of a null-sequence, $u_k$ is compactly supported in $\R_+$. So, for $l\geq 1$, let $\{k_l\}_{l= 1}^\infty$ be a subsequence such that
$$q_{\mathcal{R}}(u_{l})||\phi_{k_{l}}||_2^2= q_{\mathcal{R}}(u_{l}) <\frac{1}{l}\,,$$
and
$$\left(\int_0^\infty u_{l}^2(r)r^{n-3}\dr\right)q_{\mathcal{L}}(\phi_{k_l})<\frac{1}{l}\,.$$
Thus, $\lim_{l\to\infty}Q(u_{l}(r)\phi_{k_l}(\gw))=0$.

On the other hand, $\{u_{l}(r)\phi_{k_l}\}$ converges uniformly in $B$ to the function  $r^{(2-n)/2}\phi_\gm(\gw)$, hence, $ \int_B \big(u_{l}(r)\phi_{k_l}(\gw)\big)^2\dx\asymp 1$.

\medskip

Therefore,  $\{u_{l}(r)\phi_{k_l}(\gw)\}_{l= 1}^\infty$ is indeed a null-sequence for $P_\mu-W$. It follows that $P_\mu-W$ is critical in $\Gw$ with the ground state $r^{(2-n)/2}\phi_\gm(\gw)$. Moreover, since $\mathcal{R}$ is null critical around $0$ and $\infty$ it follows $P_\mu-W$ is in fact null-critical around $0$ and $\infty$.

%Let us now prove that  $\gl(\gm)$ is the best possible constant for the validity of the Hardy inequality in any neighborhood of either the origin or infinity in $\Gw$. For $\eta>1$, consider the function

%\begin{equation}\label{osc}
%u_\eta(r,\gw)=r^{(2-n)/2}\cos \left[\frac{\sqrt{\gh-1}}{2}\log (r^{2-n})\right]\phi\left(\gw \right),
% \end{equation}
% which is solution of $(P_\gm-\eta W)u=0$ in $\Gw$. This function oscillates an infinite number of times, in any neighborhood of $0$ or $\infty$: in fact, in any neighborhood of $0$ or $\infty$, one can find a domain of the form $U=(r_1,r_2)\times\Sigma$, on which $u_\eta$ is positive, and vanishes on $\partial U$. This uses the fact that $\phi$ vanishes on $\partial\Sigma$; this can be seen as a consequence of \cite[Lemma 7]{MMP}, which implies that $\phi\leq C\delta^{1/2}$ but $\gd_\Gw$ restricted to $\Gs$ is not the distance to the boundary of $\Gs$. This shows that the maximum principle for $P_\gm-\eta W$ is false in $U$, hence $P_\gm-\eta W$ cannot be nonnegative in a neighborhood of $0$ or $\infty$.

\medskip

Next we prove that the spectrum of $W^{-1}P_\gm$ is $[1,\infty)$. Let us keep our assumption that $\phi_\gm$ is normalized so that $||\phi_\gm||_2=1$. If $\xi\in \R$, then it easily checked (cf. \cite{DFP}) that
$$\left(\mathcal{R}-\frac{(n-2)^2\xi^2}{4|x|^2}\right)\left(r^{n-2}\right)^{i\xi-1/2}=0,$$
therefore,
\begin{equation}\label{eigen}
\left(P_\mu-\left(1+\frac{(n-2)^2}{4\lambda(\mu)}\xi^2\right)W\right)\left((r^{n-2})^{i\xi-1/2}\phi_\gm(\gw)\right)=0.
\end{equation}
Define the subspace $\mathcal{E}$ of $L^2(\Omega,W\dx)$ consisting of all functions of the form $u(r)\phi_\gm(\gw)$, where $u\in L^2 (\R_+,r^{n-1}\lambda(\mu)/r^2\dr)$. We are going to define a spectral representation of $W^{-1}P_\gm$ restricted to the subspace $\mathcal{E}$. Notice that the measure on $\mathcal{E}$ is $r^{n-1}\lambda(\mu)/(r^2)\dr\otimes \mathrm{d}S$, so that
$$\mathcal{E}=L^2\left(\R_+,r^{n-1}\frac{\lambda(\mu)}{r^2}\dr\right)\otimes \mathrm{span}\{\phi_\mu\}.$$
Recall that the classical Mellin transform is the unitary operator $\mathcal{M}: L^2(\R_+)\to L^2(\R)$ defined by
$$\mathcal{M}f(\xi)=\frac{1}{\sqrt{2\pi}}\int_0^\infty f(r)r^{i\xi-1/2}\dr.$$
Consider the composition $\mathcal{C}$ of the unitary operator
$$\mathcal{U}: L^2\left(\R_+,r^{n-1}\frac{\lambda(\mu)}{r^2}\dr\right) \to L^2(\R_+)$$ given by
$$f(r)\mapsto \sqrt{\frac{2\lambda(\mu)}{n-2}}f(r^{1/(n-2)}),$$
with the Mellin transform $\mathcal{M}$.
Define
$$\mathcal{T}:\mathcal{E}\mapsto L^2(\R); \quad \mathcal{T}(u(r)\phi_\mu(\gw))=(\mathcal{C}u)(\xi)=\Big(\mathcal{M}\big(\mathcal{U}(u)\big)\Big)(\xi).$$
So, $\mathcal{T}$ is a unitary operator. By \eqref{eigen}, the operator $\mathcal{T}(W^{-1}P_\mu)\mathcal{T}^{-1}$ is the multiplication by the real function $\left(1+(n-2)^2\xi^2/(4\lambda(\mu))\right)$ on $L^2(\R)$, with values in $[1,\infty)$. Therefore, the spectrum of $W^{-1}P_\mu$, restricted to $\mathcal{E}$ is $[1,\infty)$. So, the spectrum of $W^{-1}P_\mu$ on $L^2(\Omega,W\dx)$ contains $[1,\infty)$. But the Hardy inequality \eqref{opt_hardy} implies that the spectrum of $W^{-1}P_\mu$ must be included in $[1,\infty)$. Hence, the spectrum of $W^{-1}P_\mu$ on $L^2(\Omega,W\dx)$ is $[1,\infty)$.

\medskip

For $k\geq2$, define the subspace $\mathcal{E}_k$ (resp. $\mathcal{E}_{1/k}$) of $L^2(\Omega,W\dx)$ consisting of functions of the form $u(r)\phi(\gw)$, where $u\in L^2\big((k,\infty),r^{n-1}\lambda(\mu)/r^2\dr\big)$ (resp. $u\in L^2\big((0,1/k),r^{n-1}\lambda(\mu)/r^2\dr\big)$). Denote by $\mathcal{P}_k$ (resp. $\mathcal{P}_{1/k}$) the restriction of $P_\gm$ to $\mathcal{E}_k$ (resp. $\mathcal{E}_{1/k}$), with Dirichlet boundary conditions at $\{k\}\times\Sigma$ (resp. at $\{1/k\}\times \Sigma$). Notice that by symmetry considerations (under $x\mapsto x^{-1}$), the spectrum of $W^{-1}\mathcal{P}_k$ and the spectrum of $W^{-1}\mathcal{P}_{1/k}$ are equal. Moreover, by the fact that the essential spectrum is stable under compactly supported perturbations, and since the discrete spectrum of $W^{-1}P_\gm$ is empty, the spectrum of $W^{-1}P_\gm$ is equal to the union of the spectrum of $W^{-1}\mathcal{P}_k$, and of the spectrum of $W^{-1}\mathcal{P}_{1/k}$. Thus, the spectra of $W^{-1}\mathcal{P}_k$ and $W^{-1}\mathcal{P}_{1/k}$ are both equal to $[1,\infty)$.

Also, the best constant $C_0$ for the validity of the Hardy inequality
$$\int_{\mathcal{V}_0}\left(|\nabla \vgf|^2-\frac{\mu}{\delta^2_\Omega}\vgf^2\right)\dx \geq C_0\int_{\mathcal{V}_0} W\vgf^2\dx \qquad\forall \vgf\in C_0^\infty(\mathcal{V}_0),$$
in  $\mathcal{V}_0$, an arbitrarily small neighborhood of zero, is equal to the bottom of the essential spectrum of $W^{-1}\mathcal{P}_{1/k}$ (for any $k\geq 2$). Thus, it is equal to $1$. Similarly, using $W^{-1}\mathcal{P}_k$ instead, one concludes that the best constant $C_\infty$ for the validity of the Hardy inequality
$$\int_{\mathcal{V}_\infty}\left(|\nabla \vgf|^2-\frac{\mu}{\delta^2_\Omega}\vgf^2\right)\dx\geq C_\infty\int_{\mathcal{V}_\infty} W\vgf^2\dx\qquad\forall \vgf\in C_0^\infty(\mathcal{V}_\infty),$$
in  $\mathcal{V}_\infty$, an arbitrarily small neighborhood at infinity, is equal to $1$. This finishes the proof of Theorem~\ref{thm_opt_hardy1}.
\end{proof}
We now turn to the case $\mu=\mu_0$, for which we need to assume more regularity on $\Gs$.
\begin{theorem}\label{thm_opt_hardy2}
Assume that $\Gs\in C^2$.

1. If $\mu_0<1/4$, then $\lambda(\mu_0)=0$, and the operator $P_{\mu_0}$ is critical in $\Omega$, and null-critical around $0$ and $\infty$. In particular, the Hardy inequality
$$\int_\Omega |\nabla \vgf|^2\dx \geq \mu_0\int_\Omega \frac{\vgf^2}{\delta_\Omega^2}\dx \qquad\forall \vgf\in C_0^\infty(\Omega),$$
cannot be improved.

2. If $\mu_0=1/4$ and $\lambda(1/4)=0$, then the operator $P_{1/4}$ is critical in $\Omega$, and null-critical around $0$ and $\infty$. In particular, the Hardy inequality
$$\int_\Omega |\nabla \vgf|^2\dx\geq \frac{1}{4}\int_\Omega \frac{\vgf^2}{\delta_\Omega^2}\dx \qquad\forall \vgf\in C_0^\infty(\Omega),$$
cannot be improved.

3. If $\mu_0=1/4$ and $\lambda(1/4)>0$, then the weight $W_{1/4}:=\lambda(1/4)|x|^{-2}$ is optimal in the sense of Theorem~\ref{thm_opt_hardy1}. In particular, the Hardy inequality \eqref{opt_hardy} cannot be improved. Moreover, The spectrum and the essential spectrum of the Friedrichs extension of the operator $(W_{1/4})^{-1}P_{1/4}$ on $L^2(\Gw, W_{1/4}\dx)$ are both equal to  $[1,\infty)$.
\end{theorem}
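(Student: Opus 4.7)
The plan is to unify the three cases by observing that in each, the operator $P_\mu-\lambda(\mu)|x|^{-2}$ admits the skew-product decomposition
\[
P_\mu-\frac{\lambda(\mu)}{|x|^2} \;=\; \mathcal{R}\otimes\mathcal{I}_\Sigma + \frac{\mathcal{I}_{\R_+}}{r^2}\otimes\mathcal{L}_\mu,
\]
where $\mathcal{R}:=-\partial_r^2-(n-1)r^{-1}\partial_r-(n-2)^2/(4r^2)$ is critical on $\R_+$ with ground state $r^{(2-n)/2}$, and $\mathcal{L}_\mu:=-\Delta_S-\mu\delta_\Omega^{-2}-\sigma(\mu)$. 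The product form is valid precisely because $\sigma(\mu)-\lambda(\mu)=-(n-2)^2/4$, and this identity holds in all three cases: Case~1 by Lemma~\ref{lem_pos_crit}(2); Case~2 since $\lambda(1/4)=0$ forces $\sigma(1/4)=-(n-2)^2/4$; Case~3 directly from the definition $\lambda(\mu)=((n-2)^2+4\sigma(\mu))/4$. Consequently $\psi:=|x|^{(2-n)/2}\phi_\mu(x/|x|)$ solves $(P_\mu-\lambda(\mu)|x|^{-2})u=0$ in $\Omega$, and by Lemma~\ref{lem_pos_crit} the operator $\mathcal{L}_\mu$ is critical on $\Sigma$ with ground state $\phi_\mu$.

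The core of the proof is to build a null-sequence for $P_\mu-\lambda(\mu)|x|^{-2}$ converging to $\psi$; Theorem~\ref{thm_crit} will then yield criticality with $\psi$ as the ground state. First I would take a standard null-sequence $\{u_k\}\subset C_0^\infty(\R_+)$ for $\mathcal{R}$ (with measure $r^{n-1}\mathrm{d}r$) converging locally uniformly to $r^{(2-n)/2}$. Next I would apply Lemma~\ref{null-seq} on $\overline{\Sigma}$ to produce a null-sequence $\{\phi_k\}\subset C_0^\infty(\Sigma)$ for $\mathcal{L}_\mu$ converging in $L^2$ and locally uniformly to $\phi_\mu$; its integrability hypothesis $\phi_\mu\in L^2(\Sigma,\delta_\Omega^{-2}\log^{-2}(\delta_\Omega)\,\mathrm{d}S)$ is furnished in Case~1 by the stronger $\phi_{\mu_0}\in L^2(\Sigma,\delta_\Omega^{-2}\,\mathrm{d}S)$ of Lemma~\ref{lem_pos_crit}(2), and in Cases~2--3 by Lemma~\ref{lem_pos_crit}(3) upon picking $\varepsilon\in(0,1)$, so that $|\log \delta_\Omega|^{-(1+\varepsilon)}\geq |\log \delta_\Omega|^{-2}$ near $\partial\Sigma$. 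Using the additive product expression
\[
Q(u(r)\phi(\omega)) \;=\; q_{\mathcal{R}}(u)\,\|\phi\|_{L^2(\Sigma)}^2 + \Big(\int_0^\infty u(r)^2\, r^{n-3}\mathrm{d}r\Big)\,q_{\mathcal{L}_\mu}(\phi)
\]
for the quadratic form $Q$ of $P_\mu-\lambda(\mu)|x|^{-2}$, the diagonal-extraction procedure from the proof of Theorem~\ref{thm_opt_hardy1} yields a subsequence $\{u_l(r)\phi_{k_l}(\omega)\}$ with $Q(u_l\phi_{k_l})\to 0$; locally uniform convergence to $\psi$ supplies the normalization on any fixed relatively compact open set.

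Null-criticality around $0$ and $\infty$ then transfers from that of $\mathcal{R}$ via the same product decomposition: separating variables, the contribution of $\mathcal{L}_\mu$ is bounded by $\|\phi_\mu\|_{L^2(\Sigma)}^2$, while the $r$-integral of the Dirichlet energy of $r^{(2-n)/2}$ under $\mathcal{R}$ diverges on both $(0,R)$ and $(R,\infty)$. In Case~3, the remaining statements---the best-constant identification on the truncated cones $\Omega_R$ and $\Omega\setminus\overline{\Omega_R}$ and the spectrum/essential spectrum of the Friedrichs extension of $W_{1/4}^{-1}P_{1/4}$ being $[1,\infty)$---follow verbatim from the Mellin transform diagonalization of Theorem~\ref{thm_opt_hardy1}: the generalized eigenfunctions $(r^{n-2})^{i\xi-1/2}\phi_{1/4}(\omega)$ solve $(P_{1/4}-(1+(n-2)^2\xi^2/(4\lambda(1/4)))W_{1/4})u=0$, the Mellin transform unitarily conjugates $W_{1/4}^{-1}P_{1/4}$ restricted to the $\phi_{1/4}$-sector $\mathcal{E}=L^2(\R_+,\lambda(1/4)r^{n-3}\mathrm{d}r)\otimes\mathrm{span}\{\phi_{1/4}\}$ to multiplication by $1+(n-2)^2\xi^2/(4\lambda(1/4))$, and the two-sided bound on the spectrum by $[1,\infty)$ is completed by the Hardy inequality \eqref{opt_hardy}. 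Symmetry of $\Omega$ under the inversion $x\mapsto x/|x|^2$ and stability of the essential spectrum under compactly supported perturbations then give the best constants on the truncated cones.

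The main obstacle is the verification of the integrability hypothesis of Lemma~\ref{null-seq} in Cases~2 and~3, where $\mu_0=1/4$: this rests on the sharp decay $\phi_{1/4}\in L^2(\Sigma,\delta_\Omega^{-2}\log^{-(1+\varepsilon)}(\delta_\Omega)\,\mathrm{d}S)$ supplied by Lemma~\ref{lem_pos_crit}(3), itself established by a delicate modification of Agmon's trick. Once this is in hand, the remaining ingredients are combinatorial adaptations of the arguments already worked out for Theorem~\ref{thm_opt_hardy1}.
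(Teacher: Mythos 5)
Your proposal is correct and follows essentially the same route as the paper: the skew-product decomposition $P_{\mu_0}-\lambda(\mu_0)|x|^{-2}=\mathcal{R}+r^{-2}\mathcal{L}_{\mu_0}$, a product null-sequence built from a null-sequence for $\mathcal{R}$ and one for $\mathcal{L}_{\mu_0}$ obtained from Lemma~\ref{null-seq} (whose weighted $L^2$ hypothesis is supplied by Lemma~\ref{lem_pos_crit}, exactly as you argue, including the choice $\varepsilon<1$ when $\mu_0=1/4$), diagonal extraction as in Theorem~\ref{thm_opt_hardy1}, null-criticality inherited from $\mathcal{R}$, and in Case~3 the Mellin-transform argument repeated verbatim. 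No gaps worth noting.
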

\begin{proof}
Denote $W(x):=\lambda(\mu_0)|x|^{-2}$. Let us start by proving that in all cases, $P_{\mu_0}-W$ is critical. Recall that in spherical coordinates $P_{\mu_0}-W$ has the following skew-product form:
$$P_{\mu_0}-W=\mathcal{R}\otimes \mathcal{I}_{\Gs}-\frac{\mathcal{I}_{\R_+}}{r^2}\otimes \mathcal{L}=\frac{\partial^2}{\partial r^2}-\frac{n-1}{r}\frac{\partial}{\partial r}-\frac{(n-2)^2}{4r^2}+\frac{1}{r^2}\mathcal{L}_{\gm_0}\,.$$
So, as in  the first part of the proof of Theorem~\ref{thm_opt_hardy1}, it is natural to construct  a null-sequence for $P_{\mu_0}-W$ of the product form $$\{\varphi_{k}(r,\gw)\}_{k= 1}^\infty=\{u_k(r)\phi_{k}(\gw)\}_{k= 1}^\infty$$ that converges locally uniformly to $r^{(2-n)/2}\phi_{\mu_0}(\gw)$.

\medskip

As in the proof of Theorem~\ref{thm_opt_hardy1}, let $\{u_k(r)\}_{k= 1}^\infty$ be a null-sequence for the critical operator $\mathcal{R}$ on $\R_+$, converging locally uniformly to $r^{(2-n)/2}$. So,
$$q_{\mathcal{R}}(u_k)\to 0,\qquad \int_1^2 (u_k)^2\,r^{n-1}\dr=1.$$

However, the definition of $\{\phi_k\}$ differs from the one of Theorem~\ref{thm_opt_hardy1}.
Let us normalize $\phi_{\mu_0}$ so that $\int_\Sigma \phi_{\mu_0}^2\,\mathrm{d}S=1$ (by Lemma~\ref{lem_pos_crit}, $\gf_\gm\in L^2(\Gs)$). By lemmas~\ref{lem_pos_crit} and \ref{null-seq}, there exists a null-sequence $\{\phi_k\}$ for $\mathcal{L}_{\gm_0}$, converging locally uniformly  and in $L^2(\Gs)$ to $\phi_{\mu_0}$. Thus, normalizing $\phi_k$ so that
$$\int_\Sigma \phi_k^2\,\mathrm{d}S=1,$$
one has for $k$ large enough, by the Harnack inequality,
$$\int_{\Sigma_1}\phi_k^2\,\mathrm{d}S\asymp 1.$$
Let $B=\{(r,\gw)\mid r\in (1,2),\,\omega\in \Sigma_1\}$. We now choose the subsequence $\{k_l\}\subset \mathbb{N}$ as in the proof of Theorem~\ref{thm_opt_hardy1}: let $\{k_l\}_{l= 1}^\infty$ be a subsequence such that
$$q_{\mathcal{R}}(u_{l})||\phi_{k_{l}}||_2^2= q_{\mathcal{R}}(u_{l}) <\frac{1}{l}\,,$$
and
$$\left(\int_0^\infty u_{l}^2(r)r^{n-3}\dr\right)q_{\mathcal{L}}(\phi_{k_l})<\frac{1}{l}\,.$$
The same computation made in the proof of Theorem~\ref{thm_opt_hardy1} shows that
$$\lim_{l\to\infty}Q(u_{l}(r)\phi_{k_l}(\gw))=0, \quad \mbox{ and } \int_B \big(u_{l}(r)\phi_{k_l}(\gw)\big)^2\dx\asymp 1,$$
so that $\{u_{l}(r)\phi_{k_l}(\gw)\}_{l= 1}^\infty$ is indeed a null-sequence for $P_\mu-W$. It follows that $P_\mu-W$ is critical in $\Gw$ with a ground state $r^{(2-n)/2}\phi_\gm(\gw)$. Moreover, since $\mathcal{R}$ is null critical around $0$ and $\infty$ it follows $P_\mu-W$ is in fact null-critical around $0$ and $\infty$.

\medskip

1. Assume now that $\mu_0<1/4$. By the first part of the proof, the operator $P_\mu-\lambda(\mu)|x|^{-2}$ is critical, and null-critical around $0$ and $\infty$. By Lemma~\ref{lem_pos_crit}, $\sigma(\mu_0)=-(n-2)^2/4$, so $\lambda(\mu_0)=0$. It follows that $P_{\mu_0}$ is critical, and null-critical around $0$ and $\infty$.

\medskip

2. Suppose that $\mu_0=1/4$, and $\lambda(1/4)=0$. Then by the first part of the proof, the operator $P_{1/4}=P_{1/4}-\lambda(1/4)|x|^{-2}$ is critical, and null-critical around $0$ and $\infty$.

\medskip

3. Assume that $\mu_0=1/4$, and $\lambda(1/4)>0$. Then following the proof of Theorem~\ref{thm_opt_hardy1}, one concludes that $W$ is an optimal weight for $P_{1/4}$.
\end{proof}
In the particular case of the half-space we can compute the constants appearing in theorems~\ref{thm_opt_hardy1} and \ref{thm_opt_hardy2}.
\begin{example}[see {\cite[Example~11.9]{DFP}} and \cite{FTT}]\label{ex2}
{\em
Let $\Gw=\Real^n_+$,  $\mu\leq \gm_0=1/4$ and consider the subcritical operator $P_\mu:=-\Gd -\mu x_1^{-2}$ in $\Gw$.  Let $\ga_+$ be the largest root of the equation
$\ga(1-\ga)=\gm$, and let $$\gh(\gm):=n-1+\sqrt{1-4\gm}=n-2+2\ga_+.$$
Then
$$v_0(x):=x_1^{\ga_+}, \qquad v_1(x):=x_1^{\ga_+}|x|^{-\gh(\gm)}$$ are two positive solutions of the equation
$P_\gm u=0$ in $\Gw$ that vanish on $\partial \Gw\setminus\{0\}$.

Therefore, $\gl(\gm)=\gh^2(\gm)/4$, and for $\gm\leq \gm_0=1/4$ we have  the following optimal Hardy inequality
$$\int_{\R^n_+}\!\!|\nabla \varphi|^2\dx-\mu\int_{\R^n_+}\!\!\frac{\varphi^2}{x_1^2}\dx \!\geq\!\frac{\eta^2(\mu)}{4}\!\!\!\int_{\R^n_+}\!\!\frac{\varphi^2}{|x|^2}\dx \quad \forall \varphi\!\in \!C_0^\infty(\R^n_+).$$
In particular, the operator $-\Gd -\mu x_1^{-2}-\gl(\gm)|x|^{-2}$ is critical in $\Real^n_+$ with the ground state $\psi(x):=x_1^{\ga_+}|x|^{-\gh(\gm)/2}$. Note that for $\mu=0$ we obtain the well known (optimal) Hardy inequality (see \cite{Nz})
$$\int_{\Real^n_+}|\nabla \varphi|^2\dx\geq \frac{n^2}{4}\int_{\Real^n_+}\frac{\varphi^2}{|x|^2}\dx \qquad \forall\varphi\in C_0^\infty(\Real^n_+),$$
while for $\gm=\gm_0=1/4$ we obtain the optimal double Hardy inequality (see \cite{FTT})
\begin{equation}\label{psar}
\int_{\Real^n_+}|\nabla \varphi|^2\dx- \frac{1}{4}\int_{\Real^n_+}\frac{1}{x_1^2}\varphi^2\dx \geq\frac{(n-1)^2}{4}\int_{\Real^n_+}\frac{\varphi^2}{|x|^2}\dx \quad \forall\varphi\in C_0^\infty(\Real^n_+).
\end{equation}
 }
\end{example}
%%%%%%%%%%%%%%%%%%%%%%%%%%%%%%%%%%%%%%%%
It turns out that in the weakly mean convex case, $\lambda(1/4)$ is always positive.
\begin{Pro}\label{pro_moztkin}
Assume that $\Sigma\in C^2$ and $\Gw$ is  weakly mean convex. Then $\lambda(1/4)>0$.
\end{Pro}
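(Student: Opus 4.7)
Since $\Omega$ is weakly mean convex, Lemma~\ref{lem_gl0} gives $\mu_0 = 1/4$. Recall that $\lambda(1/4) = \frac{(n-2)^2 + 4\sigma(1/4)}{4}$, so my goal is to prove the strict lower bound $\sigma(1/4) > -\frac{(n-2)^2}{4}$. The plan is to construct a positive weak supersolution of $P_{1/4}u=0$ on $\Omega$ out of the distance function itself, apply the ground state transform with this supersolution, and then extract the desired $|x|^{-2}$-weighted bound from the radial direction using the cone's homogeneity and a one-dimensional Hardy inequality.

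The key input is the classical Brezis--Marcus computation: in a weakly mean convex domain, $\psi(x) := \delta_\Omega^{1/2}(x)$ is a positive weak supersolution of $P_{1/4} u = 0$ in $\Omega$. Indeed,
\[
-\Delta \psi = -\tfrac{1}{2}\delta_\Omega^{-1/2}\,\Delta\delta_\Omega + \tfrac{1}{4}\delta_\Omega^{-3/2}|\nabla \delta_\Omega|^2 \geq \tfrac{1}{4}\delta_\Omega^{-3/2} = \tfrac{1}{4\delta_\Omega^{2}}\psi,
\]
which is valid distributionally since $|\nabla \delta_\Omega|=1$ a.e.\ and $-\Delta\delta_\Omega \geq 0$ weakly in $\Omega$.

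Next, by the standard ground state transform applied with this $\psi$, for any $\varphi \in C_0^\infty(\Omega)$ and $v := \varphi/\psi$ one has
\[
\int_\Omega |\nabla \varphi|^2\dx - \tfrac{1}{4}\int_\Omega \frac{\varphi^2}{\delta_\Omega^{2}}\dx \;\geq\; \int_\Omega \psi^2 |\nabla v|^2 \dx \;=\; \int_\Omega \delta_\Omega(x)\,|\nabla v|^2\dx.
\]
I would then exploit the homogeneity $\delta_\Omega(x)=r\,\delta_\Omega(\omega)$ and the spherical splitting $|\nabla v|^2 = v_r^2 + r^{-2}|\nabla_\omega v|^2$: discarding the nonnegative angular contribution and using Fubini, the right-hand side is bounded below by
\[
\int_\Sigma \delta_\Omega(\omega)\!\left(\int_0^\infty v_r^{\,2}\,r^{n}\dr\right)\dS.
\]
Applying the weighted one-dimensional Hardy inequality $\int_0^\infty (f')^2 r^n\dr \geq \frac{(n-1)^2}{4}\int_0^\infty f^2\, r^{n-2}\dr$ to $v(\cdot,\omega)$ and unwinding back to $\Omega$, I obtain
\[
\int_\Omega \delta_\Omega(x)\,|\nabla v|^2\dx \;\geq\; \frac{(n-1)^2}{4}\int_\Omega \frac{\delta_\Omega(x)}{|x|^2}\, v^2\dx \;=\; \frac{(n-1)^2}{4}\int_\Omega \frac{\varphi^2}{|x|^2}\dx,
\]
since $\psi^2 v^2 = \varphi^2$. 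Thus $\lambda(1/4) \geq (n-1)^2/4 > 0$, which (reassuringly) matches the sharp half-space value of Example~\ref{ex2}.

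The main technical point is justifying the ground state transform for the merely Lipschitz $\psi = \delta_\Omega^{1/2}$. Since $\varphi \in C_0^\infty(\Omega)$ has support away from $\partial\Omega$, the function $v = \varphi/\psi$ is bounded and Lipschitz with compact support, so $\varphi^2/\psi$ is an admissible test function against the distributional inequality $P_{1/4}\psi \geq 0$; alternatively, one can replace $\delta_\Omega$ by $\delta_\Omega + \varepsilon$, carry out the smooth computation, and pass to $\varepsilon \to 0^+$ via dominated convergence on $\supp \varphi$.
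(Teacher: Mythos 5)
Your approach is genuinely different from the paper's. The paper argues by contradiction via criticality theory: if $\lambda(1/4)=0$, Theorem~\ref{thm_opt_hardy2} forces $P_{1/4}$ to be critical, so $\delta_\Omega^{1/2}$ (a supersolution by weak mean convexity) must be the unique positive solution, hence $\delta_\Omega$ is harmonic; Motzkin's theorem and a convexity argument then force $\Omega$ to be a half-space, where $\lambda(1/4)=(n-1)^2/4>0$ gives the contradiction. You instead give a direct quantitative estimate starting from the same supersolution $\delta_\Omega^{1/2}$, via the ground state transform and the weighted one-dimensional Hardy inequality. Your computation is correct (the ground state representation for a Lipschitz supersolution and a test function supported away from $\partial\Omega$ is justified exactly as you indicate), and it yields the explicit bound $\lambda_0\big(P_{1/4},|x|^{-2},\Omega\big)\ge (n-1)^2/4$, which is sharp (achieved by the half-space, Example~\ref{ex2}). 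This is a stronger, more informative conclusion than the paper's, and it avoids Motzkin's theorem and the regularity theory for harmonic distance functions; the paper's argument, on the other hand, identifies the half-space as the rigid equality case and fits squarely into the criticality framework.

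There is, however, a small inferential gap at the end. What you prove is a Hardy inequality for $P_{1/4}$ with weight $|x|^{-2}$ and constant $(n-1)^2/4$, i.e.\ $\lambda_0\big(P_{1/4},|x|^{-2},\Omega\big)\ge (n-1)^2/4$; but the quantity $\lambda(1/4)$ is defined in \eqref{eqgsgm1} through $\sigma(1/4)=\lambda_0\big(-\Delta_S-\tfrac1{4\delta_\Omega^2},\mathbf{1},\Sigma\big)$, and Proposition~\ref{Hardy-w} only gives the one-sided bound $\lambda(1/4)\le \lambda_0\big(P_{1/4},|x|^{-2},\Omega\big)$. The reverse inequality is exactly what Theorem~\ref{thm_opt_hardy2}(3) establishes, but under the a priori hypothesis $\lambda(1/4)>0$, which is what you want to prove. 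You should close this loop explicitly, by one of two routes: (a) observe that your Hardy inequality shows $P_{1/4}$ is subcritical, so the contrapositive of Theorem~\ref{thm_opt_hardy2}(2) gives $\lambda(1/4)\ne 0$, hence $>0$; or (b) restrict your Hardy inequality to separable test functions $\varphi=f(r)g(\omega)$ and minimize over $f$ using the sharp one-dimensional constant $(n-2)^2/4$ to obtain $\sigma(1/4)\ge (2n-3)/4$, whence $\lambda(1/4)\ge (n-1)^2/4$.

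In fact your idea admits a shorter execution that avoids both the ground state transform in $\Omega$ and the patch just described: write $\delta_\Omega^{1/2}(x)=r^{1/2}\,\delta_\Omega(\omega)^{1/2}$ and insert it into the spherical form \eqref{eq_pm_spher} of $P_{1/4}$. Since $P_{1/4}\delta_\Omega^{1/2}\ge 0$ in the distributional sense, one reads off directly that $\delta_\Omega(\omega)^{1/2}$ is a positive supersolution of
$\big(-\Delta_S-\tfrac{1}{4\delta_\Omega^2}\big)u=\tfrac{2n-3}{4}\,u$ on $\Sigma$,
so by the AAP theorem $\sigma(1/4)\ge \tfrac{2n-3}{4} > -\tfrac{(n-2)^2}{4}$, which is exactly the strict bound you set out to prove at the start.
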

\begin{proof}
Since $\Gw$ is  weakly mean convex ( i.e., $-\Gd\gd_\Gw\geq 0$ in $\Gw$), it follows that $\gd_\Gw^{1/2}$ is a positive supersolution of $P_{1/4}u=0$ in $\Gw$. We proceed by contradiction: assume that $\lambda(1/4)=0$. Then by Theorem~\ref{thm_opt_hardy2} the operator $P_{1/4}$ is critical and therefore $\gd_\Gw^{1/2}$ is a positive solution of $P_{1/4}u=0$ in $\Gw$. Thus, necessarily $-\Gd\gd_\Gw=0$ in the sense of distributions. Since $\gd_\Gw\in W^{1,2}_{\rm loc}(\Gw)$ (or directly by Weyl's lemma) we have that $\gd_\Gw$ is harmonic and in particular $\gd_\Gw\in C^\infty(\Gw).$ This means that the singular set of $\gd_\Gw,$
 \begin{align*}
  {\rm Sing}(\gd_\Gw) & :=
   \{x\in\Gw\mid \gd_\Gw(x)\mbox{ is achieved by more than one boundary points}\}
    \\ \nonumber & \;= \{x\in\Gw\mid \gd_\Gw\mbox{ is not differentiable}\},
 \end{align*}
(see for example \cite[Theorem 3.3]{EvH}) is empty. In light of Motzkin theorem \cite[Theorem 1.2.4]{Sch},   $\R^n\setminus\Gw$ is convex. Since $0$ is on the boundary of $\R^n\setminus \Gw$, by considering a supporting hyperplane of $\R^n\setminus \Gw$ at $0$, we find that necessarily $\R^n\setminus\Gw$ is included in a half-space. This implies that $\Gs$ contains a half-sphere. If this half-sphere is strictly contained in $\Gs$, then $K:=\R^n\setminus\Gw$ is a closed convex cone not containing a line (i.e., $K$ is {\em pointed}). Hence, its dual cone $K^*$, and thus its polar cone $K^o=-K^*\subset\Gw$ has nonempty interior (see for instance \cite[page~53]{BV}). Clearly, $\gd_\Gw(x)=|x|$ whenever $x\in K^o$, but this contradicts the harmonicity of  $\gd_\Gw$ in $\Gw$.

Hence, $\Gs$ is precisely a half-sphere, and thus $\Gw$ is a half-space. But by Example~\ref{ex2}, in the half-space $\{x_1>0\}$ we have $\lambda(1/4)=(n-1)^2/4>0$, and we arrived at a contradiction.
\end{proof}
Assume that $\Gw$ is a domain admitting a supporting hyperplane $H$ at zero. Without loss of generality, we may assume that $H=\partial\R^n_+$. Recall that in this case
$\gl_0(-\Gd,\gd_\Gw^{-2},\Gw)\leq 1/4$ \cite[Theorem~5]{MMP}. Also, $\gd_\Gw\leq \gd_H$ in $\Gw$. Consequently, for appropriate test functions $\vgf_\vge$ supported in a relative small neighborhood of the origin in $\Gw$ we have that for $0\leq \gm\leq 1/4$ the corresponding Rayleigh-Ritz quotients satisfy the inequality
\begin{equation*}
\frac{\int_{\Gw}  \left(|\nabla\vgf_\vge|^2 -  \gm\frac{|\vgf_\vge|^2}{\gd_\Gw^2}\right) \dx}{\int_{\Gw}  \frac{|\vgf_\vge|^2}{|x|^2} \dx}
 \leq
  \frac{\int_{H} \left (\nabla\vgf_\vge|^2 -  \gm\frac{|\vgf_\vge|^2}{\gd_H^2}\right) \dx}{\int_{H}  \frac{|\vgf_\vge|^2}{|x|^2} \dx}
   =
    \frac{\left(n-1+\!\sqrt{1-4\gm}\right)^2}{4} +o(1),
\end{equation*}
where $o(1)\to 0$ as $\vge\to 0$.  Thus, Example~\ref{ex2} implies
\begin{Cor}\label{cor1}
Suppose that a domain $\Gw$ admits a supporting hyperplane at zero, and let $P_\gm=-\Gd-\gm\gd_{\Gw}^{-2}$, where $0\leq \gm\leq 1/4$. Then
$$\gl_0(P_\gm,|x|^{-2},\Gw) \leq \frac{\left(n-1+\!\sqrt{1-4\gm}\right)^2}{4}\,.$$
\end{Cor}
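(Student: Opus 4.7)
The plan is to verify the chain of Rayleigh-quotient inequalities sketched in the paragraph just before the corollary, and then invoke the variational characterization of $\gl_0$ as the infimum of such quotients.

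First, after a rigid motion I may take $H=\partial\R^n_+$; then $\Gw\subset\R^n_+$, $0\in\partial\Gw$, and $\gd_\Gw(x)\le \gd_H(x)=x_1$ on $\Gw$. For any $\vgf\in C_0^\infty(\Gw)$, extended by zero to $\R^n_+$, the assumption $\gm\ge 0$ together with $\gd_\Gw^{-2}\ge \gd_H^{-2}$ yields
\begin{equation*}
\int_\Gw|\nabla\vgf|^2\dx-\gm\int_\Gw\frac{\vgf^2}{\gd_\Gw^2}\dx\le\int_{\R^n_+}|\nabla\vgf|^2\dx-\gm\int_{\R^n_+}\frac{\vgf^2}{x_1^2}\dx,
\end{equation*}
while $\int_\Gw\vgf^2/|x|^2\dx=\int_{\R^n_+}\vgf^2/|x|^2\dx$. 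Thus the Rayleigh quotient for $P_\gm$ in $\Gw$ is dominated by the half-space Rayleigh quotient for $\tilde P_\gm:=-\Gd-\gm x_1^{-2}$ evaluated on the same $\vgf$.

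Second, by Example~\ref{ex2} the infimum of the half-space Rayleigh quotient for $\tilde P_\gm$ with weight $|x|^{-2}$ equals $\gh^2(\gm)/4$ with $\gh(\gm)=n-1+\sqrt{1-4\gm}$. By Theorem~\ref{thm_opt_hardy1} (or Theorem~\ref{thm_opt_hardy2} when $\gm=1/4$) applied to $\R^n_+$, the operator $\tilde P_\gm-\tfrac{\gh^2(\gm)}{4}|x|^{-2}$ is critical and null-critical both at $0$ and at $\infty$, with ground state $x_1^{\ga_+}|x|^{-\gh(\gm)/2}$. In particular there is a null-sequence $\{\Phi_k\}\subset C_0^\infty(\R^n_+)$ for this critical operator whose half-space Rayleigh quotients tend to $\gh^2(\gm)/4$; by the scale-invariance of the half-space Rayleigh quotient under the dilations $x\mapsto tx$, each $\Phi_k$ can be rescaled without changing its quotient. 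Then, since $0\in\overline\Gw$ and $\Gw\subset\R^n_+$, I choose dilation parameters $t_k\to 0$ small enough so that $\vgf_k(x):=\Phi_k(x/t_k)$ is supported in $\Gw$; these $\vgf_k$ lie in $C_0^\infty(\Gw)$ and, by the first step,
\begin{equation*}
\frac{\int_\Gw|\nabla\vgf_k|^2\dx-\gm\int_\Gw\vgf_k^2/\gd_\Gw^2\dx}{\int_\Gw\vgf_k^2/|x|^2\dx}\le \frac{\gh^2(\gm)}{4}+o(1),
\end{equation*}
and the variational characterization of $\gl_0$ yields the claimed bound.

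The main obstacle is the final localization step: ensuring the rescaled supports actually land inside $\Gw$. When $\Gw$ contains an open cone with apex at $0$ (e.g., any Lipschitz cone, or a domain with a smooth boundary portion near $0$), this is immediate because cones are scale-invariant and one can rotate the fixed $\supp\Phi_k$ into such a cone before dilating. For more degenerate $\Gw$ with a supporting hyperplane at $0$ but no open cone, one must construct each $\Phi_k$ itself so as to fit inside a very thin neighborhood of a tangent ray to $\Gw$ at $0$, exploiting the null-criticality at $0$ in the half-space to keep the Rayleigh quotient close to $\gh^2(\gm)/4$.
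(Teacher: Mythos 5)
Your proposal follows the same two-step structure as the paper: first compare the Rayleigh quotient of $P_\gm$ in $\Gw$ with the half-space Rayleigh quotient of $-\Gd-\gm x_1^{-2}$ via $\gd_\Gw\le x_1$ and $\gm\ge 0$; second, produce test functions in $\Gw$ near $0$ whose half-space quotient approaches $\gh^2(\gm)/4$, appealing to Example~\ref{ex2} and the null-criticality at the origin. The first step is correct and is exactly what the paper does, and you are right to flag the localization as the delicate part that the paper compresses into the phrase ``appropriate test functions $\vgf_\vge$ supported in a relative small neighborhood of the origin in $\Gw$''.

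However, your discussion of the obstacle contains a genuine error. You assert that when $\Gw$ contains an open cone $C\subsetneq\R^n_+$ with apex at $0$ (``e.g., any Lipschitz cone\ldots''), the dilation-rotation trick makes the localization immediate. This fails: the operator $-\Gd-\gm x_1^{-2}-\tfrac{\gh^2(\gm)}{4}|x|^{-2}$ is critical in $\R^n_+$, hence subcritical in any proper Lipschitz subdomain; in particular, for a proper sub-cone $C$ one has
\begin{equation*}
\inf_{\vgf\in C_0^\infty(C)}\frac{\int_{\R^n_+}\bigl(|\nabla\vgf|^2-\gm\vgf^2/x_1^2\bigr)\dx}{\int_{\R^n_+}\vgf^2/|x|^2\dx}
\;>\;\frac{\gh^2(\gm)}{4}\,,
\end{equation*}
and this infimum is scale-invariant, so no amount of dilating test functions supported inside $C$ can push the half-space quotient down to $\gh^2(\gm)/4$. (Indeed, for $\Gw$ itself a proper Lipschitz sub-cone of $\R^n_+$, Theorem~\ref{thm_opt_hardy1} gives $\gl_0(P_\gm,|x|^{-2},\Gw)=\bigl((n-2)^2+4\gs(\gm)\bigr)/4$, which exceeds $\gh^2(\gm)/4$; so a mere supporting hyperplane does not suffice, and the statement should be read with the stronger implicit hypothesis that $\pd\Gw$ is tangent to $H$ at $0$, i.e.\ $\gd_\Gw(x)/x_1\to 1$ as $x\to 0$ in $\Gw$.) Under that tangency hypothesis the rescaled domains $\vge^{-1}(\Gw\cap B_\vge)$ eventually contain any fixed truncated sub-cone $\{|y|<1/2,\ \omega_1>s\}$, which exhaust the half-ball as $s\to 0$, and the localization follows by dilating test functions through this exhaustion together with the scale invariance of the half-space quotient and null-criticality at $0$; this is the argument that needs to be written out, and neither your ``thin tube'' sketch nor the paper's one-line assertion actually performs it.
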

%%%%%%%%%%%%%%%%%%%%%%%%%%%%%%%%%%%%%%%%%%%%%%
%%%%%%%%%%%%%%
%%%%%%%%%%%%%%
\section{On the optimality of an inequality by Filippas, Tertikas and Tidblom}\label{sec_FTT}
In the present section we generalize examples~\ref{ex1} and \ref{ex2} concerning the half-space $\R^n_+$. We consider the following family of Hardy inequalities in $\R^n_+$, obtained by S. Filippas, A. Tertikas and J. Tidblom \cite{FTT}:
\begin{equation}\label{H-FTT}
\int_{\R^n_+}\!\!\!|\nabla \vgf|^2\,\mathrm{d}x\!\geq\! \int_{\R^n_+}\!\!\!\Big(\frac{\beta_1}{x_1^2}\!+\!\frac{\beta_2}{x_1^2\!+\!x_2^2}\!+\!\ldots+\frac{\beta_n}{x_1^2\!+\!\ldots\!+\!x_n^2}\Big)\,\!\!\vgf^2\,\mathrm{d}x \;\; \;\;\forall \vgf\!\in\! C_0^\infty(\R^n_+).
\end{equation}
According to \cite[Theorem A]{FTT}, the Hardy inequality \eqref{H-FTT} holds if and only if the $\beta_i$'s are of the following form:
\begin{equation}\label{beta}
\beta_1=-\alpha_1^2+\frac{1}{4},\quad \beta_i=-\alpha_i^2+\Big(\alpha_{i-1}-\frac{1}{2}\Big)^2\qquad i=2,\ldots,n,
\end{equation}
where the $\alpha_i$'s are arbitrary real numbers. Without loss of generality, we can --and will-- assume that all $\alpha_i$'s in \eqref{beta} are nonpositive . Denote
$$V(\beta_1,\ldots,\beta_j)=\Big(\frac{\beta_1}{x_1^2}+\frac{\beta_2}{x_1^2+x_2^2}+\ldots+\frac{\beta_j}{x_1^2+\ldots+x_j^2}\Big)\qquad j=1,\ldots, n .$$
Let $2^*=2n/(n-2)$ be the Sobolev exponent. In \cite[Theorem B]{FTT}, it is shown that \eqref{H-FTT} can be improved by adding to the right-hand side a Sobolev term of the form $C(\int_{\R^n_+}|\vgf|^{2^*}\dx)^{2/2^*}$  if and only if $\alpha_n<0$. Notice that $\beta_1,\ldots,\beta_{n-1}$ being fixed, taking $\alpha_n=0$ corresponds to taking the greatest $\beta_n$ possible in \eqref{beta}.

Our aim in this section is to show that when $\alpha_n=0$, not only one cannot add a Sobolev term, but in fact one cannot even add any term of the form $\int_{\R^n_+}W\vgf^2\dx$, $W\gneqq0$, to the right hand side of \eqref{H-FTT}. In other words, if $\alpha_n=0$, the operator $-\Delta-V(\beta_1,\ldots,\beta_n)$ is {\em critical} in $\R^n_+$. This implies in particular (see \cite{PT2}) that \eqref{H-FTT} cannot be improved by adding to the right-hand side any weighted Sobolev term of the form $C(\int_{\R^n_+}\rho|\vgf|^{2^*}\dx)^{2/2^*}$, where $\rho\gneqq0$; an improvement of the result obtained in \cite{FTT}.
%%%%%%%%%%%%%%%%%%%
\begin{theorem}\label{FTT-crit}
Consider the Hardy inequality \eqref{H-FTT}, where the $\beta_i$'s are defined in term of nonpositive $\alpha_i$'s by \eqref{beta}. Assume that $\alpha_n=0$, and that $\alpha_1,\ldots,\alpha_{n-1}$ are either all distinct, or all negative. Then the operator $P:=-\Delta-V(\beta_1,\ldots,\beta_n)$ is critical in $\R^n_+$, i.e., the Hardy inequality \eqref{H-FTT} cannot be improved. Furthermore, the weight $\beta_n|x|^{-2}$ is an optimal weight for the subcritical operator $-\Delta-V(\beta_1,\ldots,\beta_{n-1})$ in $\R^n_+$.
\end{theorem}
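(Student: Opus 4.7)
The plan is to mirror the proof of Theorem~\ref{thm_opt_hardy1}, applied to the scaling-invariant subcritical operator $P' := -\Delta - V(\beta_1,\ldots,\beta_{n-1})$ on the Lipschitz cone $\R^n_+$ with spherical section $\Sigma := \mathbb{S}^{n-1}\cap \R^n_+$. In spherical coordinates $x = r\omega$ the potential has the homogeneous form $V(\beta_1,\ldots,\beta_{n-1})(x) = r^{-2}\widetilde V(\omega)$ with $\widetilde V(\omega) := \sum_{j=1}^{n-1}\beta_j/(\omega_1^2+\cdots+\omega_j^2)$, so $P'$ is invariant under the scaling group $x\mapsto tx$. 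Ancona's boundary Harnack principle together with the arguments of Section~\ref{nsec_multi} (following \cite{LP,P94}) imply that the convex set $\mathcal K^0_{P'}(\R^n_+)$ of positive solutions of $P'u=0$ vanishing on $\partial\R^n_+\setminus\{0\}$ admits exactly two extreme points, both of multiplicative form $u_\pm(x) = r^{\gamma_\pm}\phi(\omega)$.

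To produce $u_\pm$ explicitly I would use the iterated ansatz $u(x) = \prod_{k=1}^n \rho_k^{a_k}$, with $\rho_k := \sqrt{x_1^2+\cdots+x_k^2}$. Using $\Delta\log\rho_k = (k-2)/\rho_k^2$ and $\nabla\log\rho_i\cdot\nabla\log\rho_j = 1/\rho_{\max(i,j)}^2$, the equation $P'u=0$ reduces to the recursion
\[
-a_k\bigl[(k-2)+s_k+s_{k-1}\bigr] = \beta_k \quad (k=1,\ldots,n-1),\qquad a_n\bigl[(n-2)+s_n+s_{n-1}\bigr] = 0,
\]
where $s_k := a_1+\cdots+a_k$. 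Each of the first $n-1$ equations is quadratic in $a_k$; substituting the inductive ansatz $s_{k-1} = (3-k)/2 - \alpha_{k-1}$ one checks that the discriminant equals $4\alpha_k^2$, so the branch $a_1 = 1/2-\alpha_1$, $a_k = \alpha_{k-1}-\alpha_k-1/2$ ($k\geq 2$) preserves the ansatz and yields a function $\phi(\omega) = \prod_k(\omega'_k)^{a_k}$ (with $\omega'_k := \sqrt{\omega_1^2+\cdots+\omega_k^2}$) that is positive on $\Sigma$. The distinctness/negativity hypothesis on the $\alpha_j$'s enters here to rule out degenerate vanishing of these discriminants at intermediate levels. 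The last equation then offers the two branches $a_n = 0$ and $a_n^- := -(n-2) - 2s_{n-1} = 2\alpha_{n-1}-1$, producing the two extreme points $u_\pm$ with $u_-/u_+ = |x|^{a_n^-}$ and, crucially, the algebraic identity $(a_n^-)^2 = (1-2\alpha_{n-1})^2 = 4\beta_n$.

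The supersolution construction \cite[Lemma~5.1]{DFP} now produces
\[
u_{1/2} := (u_+u_-)^{1/2} = r^{(2-n)/2}\phi(\omega),\qquad W := \frac{|\nabla(u_+/u_-)|^2}{4(u_+/u_-)^2} = \frac{(a_n^-)^2}{4|x|^2} = \frac{\beta_n}{|x|^2},
\]
with $(P'-W)u_{1/2} = 0$ in $\R^n_+$, so the AAP theorem (Theorem~\ref{thm_AAP}) yields $P = P'-W \geq 0$, recovering the FTT inequality~\eqref{H-FTT}.

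For the full optimality claim I would run the proof of Theorem~\ref{thm_opt_hardy1} essentially verbatim. In spherical coordinates $P$ has the skew-product form $P = \mathcal R + r^{-2}\mathcal L$, where $\mathcal R = -\partial_r^2 - (n-1)r^{-1}\partial_r - (n-2)^2/(4r^2)$ is the critical radial operator on $\R_+$ with ground state $r^{(2-n)/2}$, and $\mathcal L := -\Delta_S - \widetilde V(\omega) - \beta_n + (n-2)^2/4$ on $\Sigma$ satisfies $\mathcal L\phi = 0$. Uniqueness of the two extreme points of $\mathcal K^0_{P'}(\R^n_+)$ forces $\phi$ to be the unique (up to scalar) positive supersolution of $\mathcal L$ on $\Sigma$, hence its ground state, and Lemma~\ref{null-seq} supplies an associated null-sequence $\{\phi_k\}$ for $\mathcal L$. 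Pairing this with a radial null-sequence $\{u_k(r)\}$ for $\mathcal R$ and extracting a diagonal subsequence, one forms a product null-sequence $\{u_\ell(r)\phi_{k_\ell}(\omega)\}$ for $P$, establishing criticality and null-criticality at both $0$ and infinity; the Mellin-transform computation of Theorem~\ref{thm_opt_hardy1} then identifies the spectrum of $W^{-1}P$ as $[1,\infty)$. The principal obstacle is verifying the algebraic identity $(a_n^-)^2 = 4\beta_n$ together with the positivity of $\phi$ throughout the recursion, and confirming that the $\phi$ so produced is the actual ground state of $\mathcal L$ rather than a lower positive multiplicative solution — both of which hinge on the distinctness/negativity hypothesis on the $\alpha_j$'s.
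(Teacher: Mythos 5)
Your explicit ansatz $u=\prod_k\rho_k^{a_k}$ with $a_1=1/2-\alpha_1$, $a_k=\alpha_{k-1}-\alpha_k-1/2$ reproduces exactly the paper's $\psi=\prod_k|X_k|^{-\gamma_k}$ (with $a_k=-\gamma_k$), and your verification that $(a_n^-)^2=4\beta_n$ is correct. However, there are two substantive problems with the surrounding argument.

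First, the detour through Ancona's boundary Harnack principle and the $\mathcal{K}^0_{P'}(\R^n_+)$-structure theory of Section~\ref{nsec_multi} is not something the paper does for this theorem, and it is not a safe shortcut. The potential $V(\beta_1,\ldots,\beta_{n-1})$ is singular with different strengths along the different strata $\{x_1=\cdots=x_j=0\}$ of $\partial\R^n_+$; it is \emph{not} uniformly comparable to $\delta_\Omega^{-2}$ near the whole boundary, which is the structural hypothesis underlying the boundary Harnack argument used for $P_\mu$ in Theorem~\ref{nthm_etreme}. The paper deliberately avoids this by taking the explicit solution $\psi$ for granted (citing FTT's equality (2.3)) and proceeding through a direct criticality argument; the uniqueness of the ground state is a \emph{conclusion} of Lemma~\ref{null-seq}, not an input.

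Second, and more seriously, you invoke Lemma~\ref{null-seq} to supply a null-sequence for $\mathcal{L}$ on $\mathbb{S}_+$ without verifying its integrability hypothesis, namely that $\phi\in L^2\big(\mathbb{S}_+,\rho^{-2}\log^{-2}(\rho)\,\mathrm{d}S\big)$. This is exactly the paper's Lemma~\ref{criticality}, and its proof is the technical heart of the whole theorem: it requires decomposing a boundary neighborhood into the sets $\mathcal{E}_i$, iterating the elementary estimate \eqref{calc}, and tracking the resulting exponents $-2\alpha_k-1$. The distinctness hypothesis is used there to avoid the $\lambda_i=1/2$ (logarithmic) branch of \eqref{calc}, while the negativity hypothesis is used to absorb extra logarithmic factors via $-2\alpha_k-1>-1$. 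Your proposal instead attributes the role of the distinctness/negativity assumption to ruling out degenerate discriminants in the recursion and to identifying $\phi$ as the ground state; that is a misdiagnosis, and the actual integrability estimate is left unaddressed. Until that estimate is carried out, the criticality of $\mathcal{L}$ (and hence of $P$) is not established.
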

%%%%%%%%%%%
\begin{proof}
Denote $X_k(x):=(x_1,\ldots,x_k,0,\ldots,0)$. Let $(\beta_i)_{i=1}^n$ satisfy \eqref{beta}, and define
%%%%%%%%%%
$$\psi(x):=|X_1|^{-\gamma_1}|X_2|^{-\gamma_2}\ldots|X_n|^{-\gamma_n},$$
where $\gamma_i$ are defined by
$$\gamma_1=\alpha_1-\frac{1}{2},\quad\gamma_i=\alpha_i-\alpha_{i-1}+\frac{1}{2} \qquad i=2,\ldots,n.$$
Then,
$$\beta_1=-\gamma_1(1+\gamma_1),\quad \beta_i=-\gamma_i\Big(2-i+\gamma_i+2\sum_{k=1}^{i-1}\gamma_k\Big)\qquad i=2,\ldots,n,$$
and according to equality (2.3) in \cite{FTT},
$$-\frac{\Delta\psi}{\psi}=V(\beta_1,\ldots,\beta_n).$$
Hence, $\psi$ is a positive solution of the equation $Pu=0$ in $\R^n_+$. By the AAP Theorem, this implies the validity of \eqref{H-FTT}.

For $x=(x_1,\ldots,x_n)\in \R^n_+\setminus\{0\}$, denote
$$r=|x|, \quad \omega=\frac{x}{|x|}\,, \quad \omega_i=\frac{x_i}{r}\quad 1\leq i\leq n.$$
Notice that $\omega\in  \mathbb{S}_+:=\mathbb{S}^{n-1}\cap \{x_1>0\}$. Since $\ga_n=0$ we have
$$\psi(x)=\phi(\omega)r^{-\sum_{i=1}^n\gamma_i}=\phi(\omega)r^{(2-n)/2},$$
where
$$\phi(\gw):=\psi|_{\mathbb{S}_+}= \omega_1^{-\gamma_1}(\omega^2_1+\omega^2_2)^{-\gamma_2/2}\cdots (\omega_1^2+\cdots+\omega_n^2)^{-\gamma_n/2}.$$
Define
$$W(\omega):=\frac{\beta_1}{\omega_1^2}+\ldots+\frac{\beta_{n-1}}{\omega_1^2+\ldots+\omega_{n-1}^2}\,,$$
and let
$$\mathcal{L}:=-\Delta_{\mathbb{S}^{n-1}}-W(\omega)-\beta_n+\frac{(n-2)^2}{4}\,,\;\;\;\mbox{ and }\;\;\;\mathcal{R}:=-\frac{\partial^2}{\partial r^2}-\frac{n-1}{r}\frac{\partial}{\partial r}-\frac{(n-2)^2}{4r^2}\,.$$
Then, in spherical coordinates, $P$ has the skew-product form
$$P=\mathcal{R}+\frac{1}{r^2}\mathcal{L}.$$
Recall that $\mathcal{R}$ is critical on $(0,\infty)$, and its ground state is $r^{(2-n)/2}$.
\begin{Lem}\label{criticality}
The operator $\mathcal{L}$ is critical on $\mathbb{S}_+$, with ground state $\phi\in L^2(\mathbb{S}_+)$.
\end{Lem}
Once Lemma~\ref{criticality} is proved, the rest of the proof of Theorem~\ref{FTT-crit} follows along the lines of the proof of Theorem~\ref{thm_opt_hardy1}.
\end{proof}
\begin{proof}[Proof of Lemma~\ref{criticality}] We have
$$P\psi=0=\phi\mathcal{R}r^{(2-n)/2}+r^{-(n+2)/2}\mathcal{L}\phi.$$
Since
$$\mathcal{R}r^{-(n-2)/2}=0\qquad \mbox{in } \R_+,$$
one concludes that
$$\mathcal{L}\phi=0  \qquad \mbox{in } \mathbb{S}_+.$$
For $x\in \mathbb{S}_+$, let $\gr$ be the spherical distance function to $\partial \mathbb{S}_+=\{\gw\in \mathbb{S}_+ \mid \omega_1=0\}$, the boundary of  $\mathbb{S}_+$. Let $\mathrm{d}S$ be the Riemannian measure on $\mathbb{S}_+$.  We claim that
\begin{equation}\label{integral}
\int_{\mathbb{S}_+\cap \{\gr\leq \frac{1}{2}\}}\Big(\frac{\phi(\omega)}{\gr\log(\gr)}\Big)^2\dS<\infty.
\end{equation}
Clearly, \eqref{integral} implies that $\phi\in L^2(\mathbb{S}_+)$, and moreover, by Lemma~\ref{null-seq}, \eqref{integral} implies that $\mathcal{L}$ is critical with the ground state $\phi$. In fact, since $\phi$ is smooth in the interior of $\mathbb{S}_+$, and
$$\gr(\gw)\sim \omega_1(\gw)\, \mbox{ as } \,\gw\in \mathbb{S}_+,\, \mbox{ and } \gr(\gw)\to 0,$$
\eqref{integral} is equivalent to
\begin{equation}\label{int}
\int_{\mathbb{S}_+\cap\{\omega_1\leq \frac{1}{2}\}}\Big(\frac{\phi(\omega)}{\omega_1\log(\omega_1)}\Big)^2\dS<\infty.
\end{equation}
For $i=1,\ldots,n-1$, define
$$\mathcal{E}_i=\{\omega\in \mathbb{S}_+ \mid \omega_1\leq \varepsilon,\ldots,\omega_i^2\leq \varepsilon,\omega_{i+1}^2 > \varepsilon\}.$$
Then, all the $\mathcal{E}_i$ are disjoint, and if $\varepsilon<1/n$, one can write the $\varepsilon$-neighborhood $\mathbb{S}_+\cap\{\omega_1\leq \varepsilon\}$ of $\partial \mathbb{S}_+$ as the disjoint union:
$$\mathbb{S}_+\cap\{\omega_1\leq \varepsilon\}=\mathcal{E}_1\cup \ldots\cup \mathcal{E}_{n-1}.$$
Notice that on $\mathcal{E}_i$,
$$\phi(\gw)\leq C_\varepsilon \omega_1^{-\gamma_1}(\omega^2_1+\omega^2_2)^{-\gamma_2/2}\cdots (\omega_1^2+\cdots+\omega_i^2)^{-\gamma_i/2}.$$
Hence,
$$\int_{\mathcal{E}_i}\Big(\frac{\phi(\omega)}{\omega_1\log(\omega_1)}\Big)^2\,\mathrm{d}S\leq C_\vge\int_{\mathcal{E}_i}\log^{-2}(\omega_1)\omega_1^{-2}\omega_1^{-2\gamma_1}\cdots (\omega_1^2+\cdots+\omega_i^2)^{-\gamma_i}\,\mathrm{d}S.$$
If $\varepsilon$ is small enough, then on $\mathcal{E}_i$,
$$\dS\simeq \,\mathrm{d}\omega_1\otimes\ldots\otimes \mathrm{d}\omega_i \otimes \mathrm{d}\nu(\omega_1,\ldots,\omega_i),$$
where $\mathrm{d}\nu(\omega_1,\ldots,\omega_i)$ is the standard Hausdorff measure on the $n-i-1$-sphere $\omega_{i+1}^2+\cdots+\omega_n^2=\sigma^2$, with  $\sigma^2=1-(\omega_1^2+\cdots+\omega_i^2)$. Thus,
\be\label{int2}
 \int_{\mathcal{E}_i}\Big(\frac{\phi(\omega)}{\omega_1\log(\omega_1)}\Big)^2\mathrm{d}S
  \leq
   \tilde{C}_\varepsilon \int_{[0,\varepsilon]^{i}}\log^{-2}(\omega_1)\omega_1^{-2}\omega_1^{-2\gamma_1}\cdots (\omega_1^2+\cdots+\omega_i^2)^{-\gamma_i}\, \mathrm{d}\omega_1\ldots \mathrm{d}\omega_i.
\ee
For $\lambda_1,\ldots,\lambda_i$ real numbers and $k$ integer, define
$$
I_i(\lambda_1,\ldots,\lambda_i,k):=\!\!\!\int_{[0,\varepsilon]^{i}}\!\!\!\!\!\!\log^{-2}(\omega_1)\omega_1^{-2}\omega_1^{-2\gl_1}\cdots (\omega_1^2+\cdots+\omega_i^2)^{-\gl_i} |\log^k(\omega_1^2+\cdots+\omega_i^2)|\, \mathrm{d}\omega_1\ldots \mathrm{d}\omega_i.
$$
One has the elementary fact:
\begin{equation}\label{calc}
I_i(\lambda_1,\ldots,\lambda_i,k)\leq C_\varepsilon \left\{\begin{array}{lcl}
I_{i-1}(\lambda_1,\ldots,\lambda_{i-2},\lambda_{i-1}+\lambda_i-1/2,k),&&\lambda_i>1/2,\\\\
I_{i-1}(\lambda_1,\ldots,\lambda_{i-1},k),&&\lambda_i<1/2,\\\\
I_{i-1}(\lambda_1,\ldots,\lambda_{i-1},k+1),&&\lambda_i=1/2.
\end{array}\right.
\end{equation}

\medskip\noindent
{\em Case 1: assume that the $\alpha_k$'s, $k=1,\ldots,n-1$, are all distinct}. Then, for every $2\leq j\leq k\leq i$,
$$\gamma_j+\sum_{l=j+1}^k\Big(\gamma_l-\frac{1}{2}\Big)=\alpha_k-\alpha_{j-1}+\frac{1}{2}\neq \frac{1}{2}.$$
Moreover,
\begin{equation}\label{eq_k}
    -2-2\gamma_1-2\sum_{j=2}^k \Big(\gamma_j-\frac{1}{2}\Big)=-2-2\alpha_k-(k-2)+(k-1)=-2\alpha_k-1.
\end{equation}
Thus, by using \eqref{calc} $i$-times in \eqref{int2}, and \eqref{eq_k}, one gets
\bea\nonumber
 \int_{\mathcal{E}_i}\Big(\frac{\phi(\omega)}{\omega_1\log(\omega_1)}\Big)^2\mathrm{d}S
  & \leq &
   C\sum_{k=1}^i\int_0^\varepsilon \log(\omega_1)^{-2}\omega_1^{-2-2\gamma_1-2\sum_{j=2}^k (\gamma_j-1/2)}\mathrm{d}\omega_1
    \\ \nonumber & \leq &
     C\sum_{k=1}^i\int_0^\varepsilon\log(\omega_1)^{-2}\omega_1^{-2\alpha_k-1}\mathrm{d}\omega_1,
\eea
where by convention the sum $\sum_{j=2}^k$ is zero when $k=1$. By hypothesis, $\alpha_k\leq 0$, therefore $\log(\omega_1)^{-2}\omega_1^{-2\alpha_k-1}$ is integrable at zero, and thus one concludes the validity of \eqref{integral}.\hspace{\fill}
\medskip

\noindent{\em Case 2: assume that $\alpha_k<0$, for all $k=1,\ldots,n-1$.} Then, by using \eqref{calc} $i$-times in \eqref{int2}, and \eqref{eq_k}, one gets
\bea\nonumber
 \int_{\mathcal{E}_i}\Big(\frac{\phi(\omega)}{\omega_1\log(\omega_1)}\Big)^2\mathrm{d}S
  & \leq &
   C\sum_{k=1}^i\int_0^\varepsilon |\log^{n(k)}(\omega_1)|\omega_1^{-2-2\gamma_1-2\sum_{j=2}^k (\gamma_j-1/2)}\mathrm{d}\omega_1
    \\ \nonumber & \leq &
     C\sum_{k=1}^i\int_0^\varepsilon|\log^{n(k)}(\omega_1)|\omega_1^{-2\alpha_k-1}\mathrm{d}\omega_1,
\eea
where $n(k)$ is an integer. Since $\alpha_k<0$, the function $|\log^{n(k)}(\omega_1)|\omega_1^{-2\alpha_k-1}$ is integrable at zero, and therefore \eqref{integral} holds.
\end{proof}

\begin{Rem}
{\em We believe that Theorem~\ref{thm_opt_hardy2} should hold in the general case, without any extra assumption on $\alpha_1,\ldots,\alpha_{n-1}$. We leave this question for a future investigation.
}
\end{Rem}
%%%%%%%%%%%%%%%%%%%%%%%%%%%%%%%%%%%%%%%%%%%%%%%%%%%%%%%%%%%%%%%%%
%%%%%%%%%%%%%%%%%%%%%%%%%%%%%%%%%%%%%%%%%%%%%%%%%%%
\section{A differential inequality}\label{sec_DE}
Throughout the present section, $\Gw$ denotes a domain in $\R^n$ such that $0\in\partial \Gw$, and $P_\gm=-\Gd-\gm\gd_\Gw^{-2}\,$. Our aim is to obtain a Hardy-type inequality with the best constant for the (nonnegative) operator $P_\gm$ in $\Gw$, assuming that $\gd_\Gw$ satisfies the linear differential inequality
\begin{equation}\label{de}
-\Gd \gd_\Gw +\frac{n-1+\sqrt{1-4\mu}}{|x|^2}\big(x\cdot\nabla \gd_\Gw-\gd_\Gw\big) \geq 0\qquad \mbox{in }\Gw.
\end{equation}
The above differential inequality certainly holds true for any $\gm\leq 1/4$ if $\Gw$ is a weakly mean convex cone (see Definition~\ref{def_mean_conv}); it also holds for $\mu=1/4$ if $\Gw$ is a ball touching the origin (see Remark~\ref{rem3}).

For $\gm=1/4$, \eqref{de} is equivalent to the differential inequality
$$-|x|^{n-1}\div\left(|x|^{1-n}\nabla \gd_\Gw\right)- \frac{n-1}{|x|^{2}}\,\gd_\Gw \geq 0 \qquad \mbox{in }\Gw.$$
It is worth mentioning here that in \cite[Theorem~3.2]{FMT} S.~Filippas, L.~Moschini, and A.~Tertikas obtain improved Hardy inequality under the assumption that $\Gw$ is a {\em bounded} domain such that $0\in \Gw$, and $\gd_\Gw$ satisfies  the differential inequality
$$ -\div\left(|x|^{2-n}\nabla \gd_\Gw\right) \geq 0 \qquad \mbox{in }\Gw,  $$
while K.T.~Gkikas in \cite{G} proves the Hardy inequality in an {\em exterior} domain $\Gw$ such that $0\in \R^n\setminus \bar{\Gw}$, and $\gd_\Gw$ satisfies  the differential inequality
$$ -\div\left(|x|^{1-n}\nabla \gd_\Gw\right) \geq 0 \qquad \mbox{in }\Gw.  $$

Let
\begin{equation}\label{eq_gh}
\gh(\gm) :=  n-1+\sqrt{1-4\gm}.
\end{equation}
Recall that for $\Gw=\R^n_+$, we obtained in Example~\ref{ex2} that $\gl_0(P_\gm,|x|^{-2},\Gw)=\gh^2(\gm)/4$.
The following theorem shows that if $\Gw$ is a domain such that $\gd_\Gw$  is a positive supersolution of a certain second-order linear elliptic equation, then
$\gl_0(P_\gm,|x|^{-2},\Gw)\geq \gh^2(\gm)/4$.
\begin{theorem}
Let $\Gw$ be a domain in $\R^n$ such that $0\in\partial \Gw$. Fix $\gm\leq 1/4$, and let $\gh(\gm)$ be as in \eqref{eq_gh}. Suppose that $\gd_\Gw$ satisfies the following differential inequality
\begin{equation}\label{eq_diff_ineq}
-\Gd \gd_\Gw +\frac{\eta(\mu)}{|x|^2}\big(x\cdot\nabla \gd_\Gw-\gd_\Gw\big) \geq 0 \qquad \mbox{ in } \Gw
\end{equation}
in the sense of distributions. Then the following improved Hardy inequality holds
\begin{equation}\label{opt_hardy4}
 \int_{\Omega}|\nabla \varphi|^2\dx
  -\gm\int_{\Omega} \frac{|\varphi|^2}{\gd_\Gw^2}\dx
   \geq \frac{\gh^2(\gm)}{4}\int_{\Omega} \frac{|\varphi|^2}{|x|^2}\dx
    \qquad \forall \varphi\in C_0^\infty(\Omega).
\end{equation}
 Assume further that $\Gw$ admits a supporting hyperplane at zero and $\gm\geq 0$, then $$ \gl_0(P_\gm,|x|^{-2},\Gw)=\frac{\gh^2(\gm)}{4}\,.$$
\end{theorem}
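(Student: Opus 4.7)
The plan is to construct an explicit positive supersolution of the equation $\bigl(P_\gm - \tfrac{\gh^2(\gm)}{4}|x|^{-2}\bigr)u=0$ in $\Gw$, mirroring the Agmon ground state found in the half-space in Example~\ref{ex2}. Let $\ga := \ga_+ = (1+\sqrt{1-4\gm})/2$ denote the larger root of $\ga(1-\ga) = \gm$, and set
$$u(x) := \gd_\Gw^\ga(x)\,|x|^{-\gh(\gm)/2}.$$
A direct computation using $|\nabla\gd_\Gw|^2=1$ a.e.\ (Rademacher) yields the pointwise identity
\begin{equation*}
-\frac{\Gd u}{u} - \frac{\gm}{\gd_\Gw^2} - \frac{\gh^2(\gm)}{4|x|^2} \;=\; \frac{\ga}{\gd_\Gw}\left[-\Gd \gd_\Gw + \frac{\gh(\gm)}{|x|^2}\bigl(x\cdot\nabla\gd_\Gw - \gd_\Gw\bigr)\right].
\end{equation*}
The coefficient of $\gd_\Gw^{-2}$ cancels because $-\ga(\ga-1)=\gm$; the coefficient of $|x|^{-2}$ reduces to $-\ga\gh(\gm)$ via $\gh(\gm)-(n-2)=1+\sqrt{1-4\gm}=2\ga$; the mixed term $\gd_\Gw^{-1}|x|^{-2}(x\cdot\nabla\gd_\Gw)$ carries coefficient $2\ga\cdot(\gh(\gm)/2) = \ga\gh(\gm)$; and the remaining $-\ga\Gd\gd_\Gw/\gd_\Gw$ matches the bracket. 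By hypothesis \eqref{eq_diff_ineq}, the bracket is nonnegative, so $u$ is formally a positive supersolution.

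To convert this into the Hardy inequality \eqref{opt_hardy4}, I would invoke the AAP theorem (Theorem~\ref{thm_AAP}) through the classical ground-state transform. For $\vgf \in C_0^\infty(\Gw)$, set $\phi := \vgf/u$; since $\mathrm{supp}(\vgf) \Subset \Gw$ and $0 \notin \Gw$, both $\gd_\Gw$ and $|x|$ are bounded below on $\mathrm{supp}(\vgf)$, so $\phi$ is Lipschitz with compact support in $\Gw$. The pointwise identity
$$|\nabla(u\phi)|^2 = u^2|\nabla\phi|^2 + \nabla u \cdot \nabla(u\phi^2),$$
combined with testing the distributional form of \eqref{eq_diff_ineq} against the nonnegative function $\ga\vgf^2/\gd_\Gw$ (a bona fide element of $W^{1,\infty}_c(\Gw)$ on $\mathrm{supp}(\vgf)$), reproduces the formal identity above in the integrated sense and yields
$$\int_\Gw |\nabla\vgf|^2 \dx - \gm\int_\Gw \frac{\vgf^2}{\gd_\Gw^2}\dx - \frac{\gh^2(\gm)}{4}\int_\Gw \frac{\vgf^2}{|x|^2}\dx \;\geq\; \int_\Gw u^2|\nabla\phi|^2 \dx \;\geq\; 0.$$
An alternative route is to mollify $\gd_\Gw$ on a neighborhood of $\mathrm{supp}(\vgf)$ (where the Lipschitz coefficient $\gh/|x|^2$ is smooth), run the pointwise supersolution inequality for the smooth approximants, and pass to the limit; both factors $\gd_\Gw$ and $|x|$ are uniformly positive on $\mathrm{supp}(\vgf)$, so no boundary or origin subtlety intervenes.

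For the optimality assertion, assume in addition that $\Gw$ admits a supporting hyperplane at $0$ and $0 \leq \gm \leq 1/4$. The inequality just proved supplies $\gl_0(P_\gm,|x|^{-2},\Gw) \geq \gh^2(\gm)/4$, while Corollary~\ref{cor1} gives the matching upper bound $\gl_0(P_\gm,|x|^{-2},\Gw) \leq (n-1+\sqrt{1-4\gm})^2/4 = \gh^2(\gm)/4$; equality follows. The principal obstacle throughout is the rigorous execution of the ground-state transform when $\gd_\Gw$ is merely Lipschitz and the differential inequality \eqref{eq_diff_ineq} is only distributional; however, the compact support of $\vgf \in C_0^\infty(\Gw)$ keeps the argument uniformly away from both $\partial\Gw$ and the origin, reducing the analysis to a standard test-function or mollification argument where every coefficient behaves nicely.
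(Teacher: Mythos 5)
Your proposal matches the paper's proof essentially step for step: the same Agmon-type supersolution $\psi=\gd_\Gw^{\ga_+}|x|^{-\eta(\gm)/2}$, the same pointwise identity collapsing the $\gd_\Gw^{-2}$ and $|x|^{-2}$ terms to the $\eta(\gm)$-weighted bracket $-\Gd\gd_\Gw+\frac{\eta(\mu)}{|x|^2}(x\cdot\nabla\gd_\Gw-\gd_\Gw)$, the same invocation of the AAP theorem to pass from the supersolution to the quadratic-form inequality, and the same use of Corollary~\ref{cor1} for the reverse bound when a supporting hyperplane exists and $0\le\gm\le 1/4$. The only genuine difference is that you spell out the justification of the weak supersolution property for the merely Lipschitz function $\gd_\Gw$ via the ground-state transform or mollification on $\mathrm{supp}(\vgf)\Subset\Gw$; the paper leaves this implicit, relying on the AAP theorem in its distributional form, so your added discussion strengthens rather than changes the argument.
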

\begin{proof}

As in Example \ref{ex2}, we write $\ga_+$ for the largest root of the equation $\ga(1-\ga)=\gm$, and $\psi:=\delta_\Gw^{\alpha_+}|x|^{-\eta(\mu)/2}$. We will show that $\psi$  is a supersolution of the equation
$$\left(P_\mu-(\eta(\mu)/2)^2|x|^{-2}\right)u=0\qquad \mbox{in } \Gw,$$ and then (\ref{opt_hardy4}) follows from the AAP theorem (Theorem \ref{thm_AAP}). By direct computations we get
 \begin{align*}\nonumber
  & \Big(P_\mu-\frac{\eta^2(\mu)}{4|x|^2}\Big)\psi
   \\[2mm] & =
    \ga_+\Big(-\Gd\gd_\Gw+\frac{\eta(\mu)}{|x|^2}x\cdot\nabla\gd_\Gw\Big)\gd_\Gw^{\ga{_+}-1}|x|^{-\eta(\mu)/2}
     +
      \frac{\eta(\mu)}{2}\big(n-2-\eta(\mu)\big)\gd_\Gw^{\ga_+}|x|^{-\eta(\mu)/2-2}
       \\[2mm] & =
        \ga_+\Big(-\Gd\gd_\Gw+\frac{\eta(\mu)}{|x|^2}\big(x\cdot\nabla\gd_\Gw-\gd_\Gw\big)\Big) \geq 0,
\end{align*}
where for the second equality we have used the fact that $n-2-\gh(\gm)=-2\ga_+,$ which follows from our choice of $\ga_+$.

Assume that $\Gw$ is a domain admitting a supporting hyperplane $H$ at zero. Without loss of generality, we may assume that $H=\partial\R^n_+$.
 Then by Corollary~\ref{cor1} we have that $\gl_0(P_\gm,|x|^{-2},\Gw)\leq\gh^2(\gm)/4$.
Thus, $\gl_0(P_\gm,|x|^{-2},\Gw)=\gh^2(\gm)/4.$
\end{proof}

\begin{remark}\label{rem3}{\em
1. By \eqref{eq_Eu_homo1}, inequality \eqref{eq_diff_ineq} holds true  for any $\gm\leq 1/4$ if $\Gw$ is a weakly mean convex cone.

We claim that \eqref{eq_diff_ineq} holds true also for $\gm= 1/4$ in any ball $B$ with $0\in \partial B$, and consequently, the Hardy inequality \eqref{opt_hardy4} is valid in this case.

Indeed, let $B=B_R(x_0)$ be an open ball in $\Rn$ centered at $x_0,$ such that $|x_0|=R$. Then for $x\in B$ we have $\gd_{B}(x)=R-|x_0-x|$, and simple computations show that for any $x\in B\setminus\{x_0\}$
 \be\nonumber
  \nabla\gd_{B}(x)
   =
    \frac{x_0-x}{|x_0-x|}
     \hspace{1em}\mbox{ and }\hspace{1em}
      -\Delta\gd_{B}(x)
       =
        \frac{n-1}{|x_0-x|}\,.
 \ee
Thus, for (\ref{eq_diff_ineq}) to be true it is enough that for any $x\in B\setminus\{x_0\}$ we have
  $$-\Gd \gd_B +\frac{\eta(\mu)}{|x|^2}\big(x\cdot\nabla \gd_B-\gd_B\big)=\frac{n-1}{|x_0-x|}+\frac{n-1}{|x|^2}\left(x\cdot \frac{(x_0-x)}{|x_0-x|}-R+|x_0-x|\right)\geq 0.$$
    After some cancelations this is equivalent to
 \be\label{eq_enough}
  |x|^2
   \geq
    \big(R|x_0-x|-x_0\cdot(x_0-x)\big)\qquad \forall x\in B.
 \ee
Some further simple computations implies that $(\ref{eq_enough})$ is equivalent to
 \be\nonumber
  (x_0-x)\cdot x\leq R^2-R|x_0-x|\qquad \forall x\in B.
 \ee
This is true since
 \bea\nonumber
  2(x_0-x)\cdot x
   =
    R^2-|x|^2-|x_0-x|^2
     \leq
      R^2-|x_0-x|^2
       \leq
        2(R^2-R|x_0-x|),
 \eea
where in the last inequality we have used $\alpha^2-\beta^2\leq2(\alpha^2-\alpha\beta)$ for all $\alpha,\beta\in\R.$

2. If the origin is an isolated point of $\partial \Gw$. Then the classical Hardy inequality near $0$ and Theorem~\ref{thm_AAP} imply that inequality \eqref{eq_diff_ineq} cannot hold.

3. It would be interesting to characterize the domains for which \eqref{eq_diff_ineq} hold true.
 }
 \end{remark}
%%%%%%%%%%%%%%%%%%%%%%%%

%\medskip

\begin{center}{\bf Acknowledgments} \end{center}
The authors wish to thank Professor Alano Ancona for valuable discussions, and Prof. Achilles Tertikas for having pointed out the results of \cite{FTT} to them. The authors acknowledge the support of the Israel Science Foundation (grants No. 963/11) founded by the Israel Academy of Sciences and Humanities. B.~D. is supported in part by a Technion fellowship. B.~D. was also supported in part by the Natural Sciences and Engineering Research council of Canada (NSERC). G.~P. is supported in part at the Technion by a Fine Fellowship. Finally, Y.~P. would like to thank Alessia Kogoj and Sergio Polidoro and the Bruno Pini Mathematical Analysis Seminar for the kind hospitality during his visit at Universit\'{e} di Bologna and Universit\`{a} di Modena e Reggio.
%%%%%%%%%%%%%%%%%%%%%%%%%%%%%%%%%%%%%%%%%%%%%%%
\bibliographystyle{alpha}

\begin{thebibliography}{111}
%
\bibitem{Ag82} S.~Agmon, ``Lectures on Exponential Decay of Solutions of Second-Order Elliptic Equations: Bounds on Eigenfunctions of N-body Schr\"odinger Operators", Mathematical Notes, {\bf 29}, Princeton University Press, Prince\-ton, 1982.
%
\bibitem{Ag83} S.~Agmon, {\em On positivity and decay of solutions of second order
elliptic equations on Riemannian manifolds}, in ``Methods of
Functional Analysis and Theory of Elliptic Equations", Liguori, Naples, 1983, pp.~19--52.
%
\bibitem{Ag85} S.~Agmon, {\em Bounds on exponential decay of eigenfunctions, in Schr\"{o}dinger Operators}, ed.
S. Graffi, Lecture Notes in Math., Vol. {\bf1159}, Springer-Verlag, Berlin, 1985, pp.~1--38.
%
\bibitem{An87}   A.~Ancona, {\em Negatively curved manifolds, elliptic operators and the Martin boundary}, Annals of
Mathematics, Second Series, {\bf 125} (1987), 495--536.

\bibitem{An12}  A.~Ancona, {\em On positive harmonic functions in cones and cylinders}, Rev. Mat. Iberoam. {\bf 28} (2012), 201--230.

\bibitem{An12a} A.~Ancona,  {\em Positive solutions of Schr\"odinger equations and fine regularity of boundary points}, Math. Z. {\bf 272} (2012), 405--427.
%
\bibitem{BFT} G.~Barbatis, S.~Filippas, and A.~Tertikas, {\em A unified approach to improved $L^p$ Hardy inequalities with best constants}, Trans. Amer. Math. Soc. {\bf 356} (2004),  2169--2196.
%
\bibitem{BL} G.~Barbatis, and  P.~D.~Lamberti, {\em Shape sensitivity analysis of the Hardy constant}, Nonlinear Anal. {\bf 103} (2014), 98--112.
%
\bibitem{BV} S.~Boyd, and L.~Vandenberghe, ``Convex Optimization", Cambridge University Press, Cambridge (2004).
%
\bibitem{C} C.~Cazacu, {\em New estimates for the Hardy constant of multipolar Schr\"odinger operators with boundary singularities}, arXiv: 1402.5933.
%
\bibitem{DP} S.~Daneri, and A.~Pratelli, {\em Smooth approximation of bi-Lipschitz orientation-preserving homeomorphisms},
Ann. Inst. H.~Poincar\'{e} Anal. Non Lin\'{e}aire {\bf 31} (2014), 567--589.
%
\bibitem{D} B.~Devyver, {\em A spectral result for Hardy inequalities}, J. Math. Pures Appl. (9) {\bf 102} (2014), 813--853.
%
\bibitem{DFP} B.~Devyver, M.~Fraas, and Y.~Pinchover, {\em Optimal Hardy weight for second-order elliptic operator: an answer to a problem of Agmon}, J.~Functional Analysis {\bf 266} (2014),  4422--4489.
%
\bibitem{DPP} B.~Devyver, Y.~Pinchover, and G.~Psaradakis, {\em On optimal Hardy inequalities in cones}, submitted to the Bruno Pini Mathematical Analysis Seminar.
%
\bibitem{EvH} W.~D.~Evans, D.~J.~Harris, {\em Sobolev embeddings for generalized ridged domains}, Proc. London Math. Soc. (3) {\bf 54} (1987), 141--175.
%
\bibitem{FM} M.~M.~Fall, and R.~Musina, {\em Hardy-Poincar\'{e} inequalities with boundary singularities},  Proc. Roy. Soc. Edinburgh {\bf 142} (2012), 769--786.
%
\bibitem{FMT} S.~Filippas, L.~Moschini, and A.~Tertikas, {\em Sharp two-sided heat kernel estimates for critical Schr\"odinger operators on bounded domains}, Comm. Math. Phys. {\bf 273} (2007),  237--281.
%
\bibitem{FTT} S.~Filippas, A.~Tertikas, and J.~Tidblom, {\em On the structure of Hardy-Sobolev-Maz'ya inequalities},
 J.~Eur. Math. Soc. {\bf 11} (2009), 1165--1185.
%
\bibitem{G} K.T.~Gkikas, {\em Hardy-Sobolev inequalities in unbounded domains and heat kernel estimates}, J.~Funct. Anal. {\bf 264} (2013), 837--893.
%
\bibitem{GR} N.~Ghoussoub, and F.~Robert, {\em On the Hardy-Schr\"odinger operator with a boundary singularity}, arXiv: 1410.1913v2.
%
\bibitem{HOL} T.~Hoffmann-Ostenhof, and A.~Laptev, {\em Hardy inequalities with homogenuous weights}, arXiv: 1408.5561.
%
\bibitem{LP} V.Ya.~Lin, and Y.~Pinchover, {\em Manifolds with group actions and elliptic operators}, Mem. Amer. Math. Soc. {\bf 112} (1994).
%
\bibitem{LLM} V.~Liskevich, S.~Lyahkova, and V.~Moroz, {\em Positive solutions to singular semilinear elliptic equations with critical potential on cone-like domains}, Adv. Differential Equations {\bf 11} (2006), 361--398.
%
\bibitem{MMP} M.~Marcus, V.~J.~Mizel, and Y.~Pinchover, {\em On the
best constant for Hardy's inequality in $\Real^n$}, Trans. Amer. Math. Soc. {\bf 350} (1998), 3237--3255.
%
\bibitem{MV} V.M.~Miklyukov, and M.K.~Vuorinen, {\em Hardy's inequality for $W^{1,p}_0$-functions on Riemannian manifolds}, Proc. Amer. Math. Soc. {\bf 127} (1999), 2745--2754.
%
\bibitem{M} M.~Murata, {\em Martin boundaries of elliptic skew products, semismall perturbations, and fundamental solutions of parabolic equations}, J. Funct. Anal. {\bf 194} (2002), 53--141.
%
\bibitem{Nz} A.I.~Nazarov, {\em Hardy-Sobolev inequalities in a cone}, J. Math. Sci. {\bf 132} (2006), 419--427.
%
\bibitem{N} J.~Ne\v{c}as, {\em Sur une m\'{e}thode pour r\'{e}soudre les \'{e}quations aux d\'{e}rive\'{e}s partielles du type elliptique, voisine de la variationnelle}, Ann. Scuola Norm. Sup. Pisa III {\bf 16} (1962), 305--326.
%
\bibitem{P94} Y.~Pinchover, {\em On positive Liouville theorems and asymptotic behavior
of solutions of Fuchsian type elliptic operators}, Ann. Inst. Henri Poincar\'{e}. Analyse Non Lin\'{e}aire {\bf 11} (1994), 313--341.
%
\bibitem{P07} Y.~Pinchover, {\em A Liouville-type theorem for Schr\"odinger
operators}, Comm. Math. Phys. {\bf 272} (2007), 75--84.
%
\bibitem{PT} Y.~Pinchover, and K.~Tintarev, {\em On positive solutions of $p$-Laplacian-type equations}, in
``Analysis, Partial Differential Equations and Applications - The Vladimir Maz'ya Anniversary Volume", eds. A.~Cialdea et al.,
Operator Theory: Advances and Applications, Vol. {\bf 193}, Birk\-\"auser Verlag, Basel, 2009, pp. 245--268.

\bibitem{PT2}  Y.~Pinchover, and K.~Tintarev, On the Hardy-Sobolev-Maz'ya inequality and its generalizations, in ``Sobolev Spaces in Mathematics I: Sobolev Type Inequalities'', ed. V. Maz'ya, {\em International Mathematical Series} {\bf 8}, Springer, 2009, 281--297.
%

\bibitem{Ps} G.~Psaradakis, {\em $L^1$ Hardy inequalities with weights}, J.~Geom. Anal. {\bf 23} (2013), 1703--1728.

\bibitem{R} N.~Rautenberg, {\em A Hardy inequality on Riemannian manifolds and a classification of discrete Dirichlet spectra}, ArXiv: 1401.5010.
%
\bibitem{Sch} R.~Schneider {\em Convex bodies: the Brunn-Minkowski theory}, Encyclopedia of Mathematics and its Applications, {\bf 44}. Cambridge University Press, Cambridge, 1993.
\end{thebibliography}

\end{document}